\newtheorem{theorem}{Theorem}
\newtheorem{lemma}{Lemma}
\newtheorem{false statement}{False statement}
\newtheorem{corollary}{Corollary}
\theoremstyle{definition}
\newtheorem{definition}{Definition}
\newtheorem{claim}{Claim}
\newtheorem{problem}{Problem}
\newcommand{\clc}{{\rm cl}^{\rm c}}
\newcommand{\clr}{{\rm cl}^{\rm r}}
\newcommand{\graphbox}[3]{\put(#1,#2){\put(0,0){\line(0,1){15}}
\put(0,0){\line(1,0){15}} \put(15,0){\line(0,1){15}}
\put(0,15){\line(1,0){15}} \put(2.5,2.5){#3}}}
\begin{document}
\title{\bf Heavy subgraphs, stability and hamiltonicity\thanks{Supported by
the NSFC (11271300) and the project NEXLIZ - CZ.1.07/2.3.00/30.0038.
}}
\date{}

\author{Binlong Li$^{1,3}$ and Bo Ning$^2$\thanks{Corresponding author. E-mail address: {\tt
bo.ning@tju.edu.cn (B. Ning)}.}\\[2mm]
\small $^1$ Department of Applied Mathematics, Northwestern Polytechnical University,\\
\small Xi'an, Shaanxi 710072, P.R. China\\
\small $^2$ Center for Applied Mathematics, Tianjin University,\\
\small Tianjin 300072, P.R. China\\
\small $^3$ European Centre of Excellence NTIS, 30614 Pilsen, Czech
Republic} \maketitle

\begin{center}
\begin{minipage}{130mm}
\small\noindent{\bf Abstract:} Let $G$ be a graph. Adopting the
terminology of Broersma et al. and \v{C}ada, respectively, we say
that $G$ is 2-heavy if every induced claw ($K_{1,3}$) of $G$
contains two end-vertices each one has degree at least $|V(G)|/2$;
and $G$ is o-heavy if every induced claw of $G$ contains two
end-vertices with degree sum at least $|V(G)|$ in $G$.
In this paper, we introduce a new concept, and say that
$G$ is \emph{$S$-c-heavy} if for a given graph
$S$ and every induced subgraph $G'$ of $G$ isomorphic to $S$ and
every maximal clique $C$ of $G'$, every non-trivial component of
$G'-C$ contains a vertex of degree at least $|V(G)|/2$ in $G$. In
terms of this concept, our original motivation that a theorem of Hu in
1999 can be stated as every 2-connected 2-heavy and $N$-c-heavy
graph is hamiltonian, where $N$ is the graph obtained from a
triangle by adding three disjoint pendant edges.
In this paper, we will characterize all connected
graphs $S$ such that every 2-connected o-heavy and $S$-c-heavy graph
is hamiltonian. Our work results in a different proof of a stronger
version of Hu's theorem. Furthermore, our main result improves or
extends several previous results.

\smallskip
\noindent{\bf Keywords:} heavy subgraphs; hamiltonian graphs;
closure theory

\noindent {\bf Mathematics Subject Classification (2010)}: 05C38; 05C45
\end{minipage}
\end{center}

\section{Introduction}

Throughout this paper, the graphs considered are undirected, finite
and simple (without loops and parallel edges). For terminology and
definition not defined here, we refer the reader to Bondy and Murty
\cite{BondyMurty}.

Let $G$ be a graph and $v$ be a vertex of $G$. The
\emph{neighborhood} of $v$ in $G$, denoted by $N_G(v)$, is the set
of neighbors of $v$ in $G$; and the \emph{degree} of $v$ in $G$,
denoted by $d_G(v)$, is the number of neighbors of $v$ in $G$. For
two vertices $u,v\in V(G)$, the \emph{distance} between $u$ and $v$
in $G$, denoted by $d_G(u,v)$, is the length of a shortest path
between $u$ and $v$ in $G$. When there is no danger of ambiguity, we
use $N(v)$, $d(v)$ and $d(u,v)$ instead of $N_G(v)$, $d_{G}(v)$ and
$d_{G}(u,v)$, respectively. For a subset $U$ of $V(G)$, we set
$N_U(v)=N(v)\cap U$, and $d_U(v)=|N_U(v)|$. For a subgraph $S$ of
$G$ such that $v\notin V(S)$, we use $N_S(v)$ and $d_S(v)$ instead
of $N_{V(S)}(v)$ and $d_{V(S)}(v)$, respectively.

Let $G$ be a graph and $G'$ be a subgraph of $G$. If $G'$ contains
all edges $xy\in E(G)$ with $x,y\in V(G')$, then $G'$ is an
\emph{induced subgraph} of $G$ (or a subgraph \emph{induced} by
$V(G')$). For a given graph $S$, the graph $G$ is \emph{$S$-free} if
$G$ contains no induced subgraph isomorphic to $S$. Note that if
$S_1$ is an induced subgraph of $S_2$, then an $S_1$-free graph is
also $S_2$-free.

The bipartite graph $K_{1,3}$ is the \emph{claw}. We use $P_i$
($i\geq 1$) and $C_i$ ($i\geq 3$) to denote the path and cycle of
order $i$, respectively. We denote by $Z_i$ ($i\geq 1$) the graph
obtained by identifying a vertex of a $C_3$ with an end-vertex of a
$P_{i+1}$; by $B_{i,j}$ ($i,j\geq 1$) the graph obtained by
identifying two vertices of a $C_3$ with the origins of a $P_{i+1}$
and a $P_{j+1}$, respectively; and by $N_{i,j,k}$ ($i,j,k\geq 1$)
the graph obtained by identifying the three vertices of a $C_3$ with
the origins of a $P_{i+1}$, a $P_{j+1}$ and a $P_{k+1}$,
respectively. In particular, we set $B=B_{1,1}$, $N=N_{1,1,1}$, and
$W=B_{1,2}$. (These three graphs are sometimes called the
\emph{bull}, the \emph{net} and the \emph{wounded}, respectively.)

To find sufficient conditions for hamiltonicity of graphs is a
standard topic. In particular, sufficient conditions for hamiltonicity of
graphs in terms of forbidden subgraphs have received much attention
from graph theorists. Following are some results in this area,
where the graphs $L_1$ and $L_2$ are shown in Figure 1.

\begin{theorem}\label{ThDuBrBeFa}
Let $G$ be a 2-connected graph.\\
(1) (\cite{DuffusJacbosonGould}) If $G$ is
claw-free and $N$-free, then $G$ is hamiltonian.\\
(2) (\cite{BroersmaVeldman}) If $G$ is
claw-free and $P_6$-free, then $G$ is hamiltonian.\\
(3) (\cite{Bedrossian}) If $G$ is
claw-free and $W$-free, then $G$ is hamiltonian.\\
(4) (\cite{FaudreeGouldRyjacekSchiermeyer}) If $G$ is claw-free and
$Z_3$-free, then $G$ is hamiltonian or $G=L_1$ or $L_2$.
\end{theorem}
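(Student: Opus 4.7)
The plan is to address parts (1)--(4) uniformly through Ryj\'a\v{c}ek's closure. For any claw-free graph $G$, the closure $\mathrm{cl}(G)$ is the line graph of a triangle-free graph $H$; hamiltonicity is preserved under the closure, and by Harary--Nash-Williams, $L(H)$ is hamiltonian if and only if $H$ carries a dominating closed trail (DCT). So each part of the theorem reduces to finding a DCT in the triangle-free graph $H$ associated with $G$, up to the exceptions appearing in (4).

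The next step is to translate each forbidden induced subgraph of $G$ into a structural restriction on $H$. An induced $P_k$ in $L(H)$ corresponds to a locally induced trail of length $k-1$ in $H$, so $P_6$-freeness in (2) directly bounds the length of induced trails. Each of $N$, $W$, $Z_3$ contains a triangle, and because $H$ is triangle-free, triangles in $L(H)$ arise only from stars $K_{1,3}$ in $H$; the pendant edges of $N$, $W$, $Z_3$ then translate into specific restrictions on the arrangement of edges around a branch vertex of $H$.

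With these translations in hand, a DCT in $H$ can be produced by one of two standard strategies: either Catlin's reduction method, collapsing collapsible subgraphs to a reduced graph in which the forbidden-substructure constraints force the existence of a DCT; or a longest-trail argument that takes a longest closed trail $T$ in $H$ and, if $T$ is not dominating, uses a nearby edge together with the forbidden-subgraph condition on $L(H)$ to lengthen $T$, contradicting the choice.

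The main obstacle is part (4): the exceptions $L_1$ and $L_2$ must arise precisely when the DCT-existence argument fails. Characterising such $H$ requires a careful structural analysis of triangle-free graphs without a DCT that also satisfy the $Z_3$-free translation, and showing that these are confined to two very specific small graphs whose line graphs are $L_1$ and $L_2$. By contrast, parts (1)--(3) admit no exceptional graphs, so their DCT-existence proofs, while still requiring nontrivial endpoint analysis around a longest trail, do not need this extra classification step.
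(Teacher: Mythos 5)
The paper does not prove this theorem at all: it is a list of four known results quoted with citations (Duffus--Jacobson--Gould, Broersma--Veldman, Bedrossian, Faudree--Gould--Ryj\'a\v{c}ek--Schiermeyer) and used later as a black box. So the question is whether your sketch would stand as a self-contained proof, and it would not. The most concrete flaw is in your uniform reduction for part (3): passing to the closure $\clr(G)=L(H)$ only helps if the forbidden subgraph condition survives the closure, and by Theorem \ref{ThBrRyFa} (Brousek--Ryj\'a\v{c}ek--Favaron) the class of claw-free $S$-free graphs is r-stable precisely for $S\in\{C_3,H\}\cup\{P_i\}\cup\{Z_i\}\cup\{N_{i,j,k}\}$. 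The graph $W=B_{1,2}$ is not on that list, so a claw-free $W$-free graph can have a closure containing an induced $W$; your translation of ``$W$-free'' into a restriction on $H$ therefore has nothing to apply to, and part (3) needs a genuinely different argument (this is exactly why the non-stable pairs require supplementary work in the closure literature). Parts (1), (2), (4) do not have this problem since $N=N_{1,1,1}$, $P_6$ and $Z_3$ are stable, but there the proposal is still only a programme: the actual DCT-existence arguments in the triangle-free preimage $H$ --- the entire combinatorial content of these theorems --- are named (Catlin reduction, longest-trail exchange) rather than carried out, and you explicitly concede that the classification of the exceptional graphs $L_1$, $L_2$ in part (4) is an unresolved obstacle rather than a completed step.

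Two smaller points. The correspondence ``induced $P_k$ in $L(H)$ $\leftrightarrow$ trail of length $k-1$ in $H$'' needs to be stated and proved with care (the preimage must be an induced path/trail in a suitable local sense, and the Harary--Nash-Williams equivalence requires $|E(H)|\ge 3$). And since the paper only cites these results, the honest fix here is either to cite them as the paper does, or to commit to one case (say $P_6$) and execute the DCT argument in full; as written, the proposal establishes none of the four statements.
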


\begin{center}
\begin{picture}(280,120)
\put(0,0){\multiput(20,30)(80,0){2}{\put(0,0){\circle*{4}}
\put(0,80){\circle*{4}} \put(20,40){\circle*{4}}
\put(0,0){\line(0,1){80}} \put(0,0){\line(1,2){20}}
\put(0,80){\line(1,-2){20}}} \put(20,30){\line(1,0){80}}
\put(20,30){\line(2,1){40}} \put(100,30){\line(-2,1){40}}
\put(20,110){\line(1,0){80}} \put(20,110){\line(2,-1){40}}
\put(100,110){\line(-2,-1){40}} \put(60,50){\circle*{4}}
\put(60,90){\circle*{4}} \put(80,70){\circle*{4}}
\put(60,50){\line(0,1){40}} \put(60,50){\line(1,1){20}}
\put(60,90){\line(1,-1){20}} \put(65,10){$L_1$}}

\put(140,0){\multiput(20,30)(80,0){2}{\put(0,0){\circle*{4}}
\put(0,80){\circle*{4}} \put(20,40){\circle*{4}}
\put(0,0){\line(1,2){20}} \put(0,80){\line(1,-2){20}}}
\put(20,30){\line(1,0){80}} \put(100,30){\line(0,1){80}}
\put(20,30){\line(2,1){40}} \put(100,30){\line(-2,1){40}}
\put(20,110){\line(1,0){80}} \put(20,110){\line(2,-1){40}}
\put(100,110){\line(-2,-1){40}} \put(60,50){\circle*{4}}
\put(60,90){\circle*{4}} \put(80,70){\circle*{4}}
\put(60,50){\line(0,1){40}} \put(60,50){\line(1,1){20}}
\put(60,90){\line(1,-1){20}} \put(65,10){$L_2$}}
\end{picture}

{\small Figure 1. Graphs $L_1$ and $L_2$.}
\end{center}

In 1991, Bedrossian \cite{Bedrossian} characterized all pairs of
forbidden subgraphs for a 2-connected graph to be hamiltonian, in
his Ph.D. Thesis. In 1997, Faudree and Gould \cite{FaudreeGould}
extended Bedrossian's result by proving the `only if' part based on
infinite families of non-hamiltonian graphs. Before showing the result
of Faudree and Gould, we first remark that the only connected graph
$S$ of order at least 3 such that the statement `every 2-connected
$S$-free graph is hamiltonian' holds, is $P_3$, see
\cite{FaudreeGould}. So in the following theorem, we only consider
the forbidden pairs excluding $P_3$.

\begin{theorem} [\cite{FaudreeGould}]\label{ThFaGo}
Let $R,S$ be connected graphs of order at least 3 with $R,S\neq P_3$
and let $G$ be a 2-connected graph of order $n\geq 10$. Then $G$
being $R$-free and $S$-free implies $G$ is hamiltonian if and only
if (up to symmetry) $R=K_{1,3}$ and
$S=P_4,P_5,P_6,C_3,Z_1,Z_2,Z_3,B,$ $N$ or $W$.
\end{theorem}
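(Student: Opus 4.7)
The proof splits into \emph{sufficiency} and \emph{necessity}; I would treat them separately.

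For sufficiency I would use the induced-subgraph monotonicity observation already recorded in the excerpt: $S_1$-free implies $S_2$-free whenever $S_1$ is an induced subgraph of $S_2$. A quick inspection shows that every $S$ on the list is an induced subgraph of one of $P_6, Z_3, N, W$: namely $P_4, P_5 \subset P_6$; $C_3, Z_1, Z_2 \subset Z_3$; $B \subset N$; and the four maximal members $P_6, Z_3, N, W$ are themselves on the list. Hence for each of the ten listed pairs the hypothesis ``$K_{1,3}$-free and $S$-free'' is at least as strong as one of the hypotheses of Theorem~\ref{ThDuBrBeFa}(1)--(4), and hamiltonicity follows. The only subtlety is the $Z_3$ case, where Theorem~\ref{ThDuBrBeFa}(4) allows the exceptions $L_1, L_2$; both have order $9$ and are therefore ruled out by $n \geq 10$.

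For necessity I need to exhibit, for every pair $(R, S)$ outside the list (with $R, S$ connected, of order at least $3$, and neither equal to $P_3$, up to symmetry), an infinite family of $2$-connected non-hamiltonian $\{R,S\}$-free graphs of order at least $10$. The natural route, following Bedrossian, is a two-phase reduction. First, one shows that at least one of $R,S$ must be the claw $K_{1,3}$: the family of complete bipartite graphs $K_{p,p+1}$ with $p$ odd is $2$-connected, non-hamiltonian by parity, of arbitrarily large order, and has a very restricted list of induced subgraphs, so no forbidden pair avoiding the claw can exclude every such graph together with its analogues. Second, once $R = K_{1,3}$ is fixed, one examines each candidate $S \notin \{P_4, P_5, P_6, C_3, Z_1, Z_2, Z_3, B, N, W\}$ and exhibits a concrete family of $2$-connected non-hamiltonian claw-free graphs avoiding $S$; typical constructions are line graphs $L(H)$ of suitable $2$-connected non-hamiltonian graphs $H$, or graphs built by gluing large cliques to a fixed non-hamiltonian skeleton along small cutsets, engineered so that no induced copy of $S$ appears.

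The main obstacle is this necessity direction. There is no uniform family that refutes every non-listed pair; instead each candidate $S$ requires its own construction, and one must verify $2$-connectedness, non-hamiltonicity, claw-freeness, $S$-freeness, and the existence of arbitrarily large members. The hypothesis $n \geq 10$ plays a role on both sides: it removes the exceptions $L_1, L_2$ on the sufficiency side, and on the necessity side it forces the counterexample families to be constructed so that members of every sufficiently large order exist. By contrast, sufficiency is essentially a corollary of Theorem~\ref{ThDuBrBeFa} plus the monotonicity remark, so the real combinatorial work lies entirely in the case analysis for necessity.
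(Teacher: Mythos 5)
The paper does not prove this statement at all: it is quoted verbatim from Faudree and Gould \cite{FaudreeGould} as background, so there is no internal proof to compare against. Judging your proposal on its own terms: the sufficiency half is correct and complete modulo Theorem \ref{ThDuBrBeFa}. The containments $P_4,P_5\subset P_6$; $C_3,Z_1,Z_2\subset Z_3$; $B\subset N$ are right, the monotonicity remark for freeness is exactly the one recorded in the paper, and you correctly observe that $L_1=P_{T,T,T}$ and $L_2=P_{3,T,T}$ have order $9$, so the hypothesis $n\geq 10$ disposes of the exceptional graphs in the $Z_3$ case. (For $S=C_3$ there is an even shorter route: a $2$-connected claw-free triangle-free graph has maximum degree $2$ and is a cycle.)

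The necessity half, however, is a plan rather than a proof, and it contains one concrete overstatement. From the family $K_{p,p+1}$ alone you cannot conclude that one of $R,S$ is the claw: the connected induced subgraphs of $K_{p,p+1}$ of order at least $3$ are exactly the complete bipartite graphs $K_{s,t}$, so that family only forces one of $R,S$ to be some $K_{1,t}$ with $t\geq 3$ or some $K_{s,t}$ with $s,t\geq 2$. Pinning the forbidden graph down to $K_{1,3}$ requires intersecting against further non-hamiltonian families (e.g.\ ones with no induced $C_4$ and no induced $K_{1,4}$), and the subsequent elimination of every connected $S$ outside the ten-element list requires a separate infinite construction for each candidate, with verification of $2$-connectedness, non-hamiltonicity, claw-freeness and $S$-freeness in each case. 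You name this work but do not carry any of it out, so the ``only if'' direction remains unestablished; as a blind proof of the full equivalence the proposal has a genuine gap there, even though the strategy you describe is the one Bedrossian and Faudree--Gould actually follow.
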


Degree condition is also an important type of sufficient conditions
for hamiltonicity of graphs. Let $G$ be a graph of order $n$. A
vertex $v\in V(G)$ is a \emph{heavy vertex} of $G$ if $d(v)\geq
n/2$; and a pair of vertices $\{u,v\}$ is a \emph{heavy pair} of $G$
if $uv\notin E(G)$ and $d(u)+d(v)\geq n$. In 1952, Dirac
\cite{Dirac} proved that every graph $G$ of order at least 3 is
hamiltonian if every vertex of $G$ is heavy. Ore \cite{Ore} improved
Dirac's result by showing that every graph $G$ of order at least 3
is hamiltonian if every pair of nonadjacent vertices is a heavy
pair. Fan \cite{Fan} further improved Ore's theorem by showing that every
2-connected graph $G$ is hamiltonian if every pair of vertices at
distance 2 of $G$ contains a heavy vertex.

It is natural to relax the forbidden subgraph conditions to ones
that the subgraphs are allowed, but some degree conditions are
restricted to the subgraphs. Early examples of this method used in
scientific papers can date back to 1990s
\cite{BedrossianChenSchelp,LiWeiGao,BroersmaRyjacekSchiermeyer}. In
particular, \v{C}ada \cite{Cada} introduced the class of
o-heavy graphs by restricting Ore's condition to every induced
claw of a graph. Li et al. \cite{LiRyjacekWangZhang} extended \v
Cada's concept of claw-o-heavy graphs to a general one.

Let $G'$ be an induced subgraph of $G$. Following
\cite{LiRyjacekWangZhang}, if $G'$ contains a heavy pair of $G$,
then $G'$ is an \emph{o-heavy subgraph} of $G$ (or $G'$ is
\emph{o-heavy} in $G$). For a given graph $S$, the graph $G$ is
\emph{$S$-o-heavy} if every induced subgraph of $G$ isomorphic to
$S$ is o-heavy. (It should be mentioned that \v{C}ada originally
named claw-o-heavy graphs as o-heavy graphs in \cite{Cada}.) Note
that an $S$-free graph is trivially $S$-o-heavy, and if $S_1$ is an
induced subgraph of $S_2$, then an $S_1$-o-heavy graph is also
$S_2$-o-heavy.

Li et al. \cite{LiRyjacekWangZhang} completely characterized pairs
of o-heavy subgraphs for a 2-connected graph to be hamiltonian,
which extends Theorem \ref{ThFaGo}. The main result in
\cite{LiRyjacekWangZhang} is given as follows.

\begin{theorem}[\cite{LiRyjacekWangZhang}]\label{ThLiRyWaZh}
Let $R$ and $S$ be connected graphs of order at least 3 with
$R,S\neq P_3$ and let $G$ be a 2-connected graph. Then $G$ being
$R$-o-heavy and $S$-o-heavy implies $G$ is hamiltonian if and only
if (up to symmetry) $R=K_{1,3}$ and $S=P_4,P_5,C_3,Z_1,Z_2,B,N$ or
$W$.
\end{theorem}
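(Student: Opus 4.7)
The plan is to establish both directions of the characterization separately; for the necessity we reduce to Theorem~\ref{ThFaGo} by exhibiting explicit extremal graphs, and for the sufficiency we employ closure theory to reduce each remaining case back to Theorem~\ref{ThFaGo}.

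For the necessity direction, since every $S$-free graph is trivially $S$-o-heavy, every 2-connected non-hamiltonian graph arising as a counterexample in the $R$-free/$S$-free setting is also a counterexample in the $R$-o-heavy/$S$-o-heavy setting. Theorem~\ref{ThFaGo} thus restricts us to pairs of the form $R = K_{1,3}$ and $S \in \{P_4, P_5, P_6, C_3, Z_1, Z_2, Z_3, B, N, W\}$, and the only work is to rule out $S = P_6$ and $S = Z_3$ (precisely the two graphs absent from the statement). I would construct explicit infinite families of 2-connected non-hamiltonian graphs that are $K_{1,3}$-o-heavy and $P_6$-o-heavy, respectively $Z_3$-o-heavy; natural candidates come from line-graph-type constructions augmented with carefully chosen near-universal vertices, so that every induced copy of $K_{1,3}$, $P_6$, or $Z_3$ is forced to contain a heavy pair, while hamiltonicity still fails due to a structural obstruction such as an unbalanced bipartition or a small vertex separator.

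For the sufficiency direction, given a 2-connected graph $G$ of order $n$ that is $K_{1,3}$-o-heavy and $S$-o-heavy for some $S$ in the listed set, I would pass to the Bondy--Chv\'atal closure $H = \clc(G)$. This closure preserves hamiltonicity, and since adding edges can only destroy (never create) induced copies of a fixed graph while non-adjacent heavy pairs remain heavy, $H$ is still $K_{1,3}$-o-heavy and $S$-o-heavy. The crucial observation is that every pair of non-adjacent vertices with degree sum at least $n$ has been joined in $H$; so every induced claw of $G$ has been destroyed and $H$ is claw-free. An analogous but more delicate argument, using the second o-heavy hypothesis together with the claw-freeness of $H$, shows that $H$ is also $S$-free. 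Theorem~\ref{ThFaGo} then applies to $H$, giving that $H$ and hence $G$ is hamiltonian.

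The main obstacle is proving that $H$ is $S$-free in the sufficiency direction; this requires a separate structural argument for each of the seven choices of $S$. For the smallest cases $P_4$, $Z_1$, $C_3$, the $K_{1,3}$-o-heavy property after closure typically already suffices. For the larger graphs $P_5$, $Z_2$, $B$, $N$, and $W$, one must combine the two o-heavy hypotheses: an induced copy of $S$ in $H$ would identify a triple of vertices configured as the center and two leaves of an induced claw at some stage of the closure process, leading to a contradiction with the heavy pair guaranteed on that claw. Where the Bondy--Chv\'atal closure alone is not strong enough to eliminate induced copies of $S$, a subsequent application of the Ryj\'a\v{c}ek-type closure $\clr$ may be invoked to exploit the claw-freeness of $H$ and obtain the additional structural rigidity required.
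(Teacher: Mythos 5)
This statement is quoted from \cite{LiRyjacekWangZhang}; the paper you were given contains no proof of it, so there is nothing to compare against except the correctness of your argument itself. Unfortunately the sufficiency half of your plan rests on a false step. You assert that ``adding edges can only destroy (never create) induced copies of a fixed graph.'' This is wrong: if $H$ is obtained from $G$ by adding edges and $H[U]\cong S$, then $G[U]$ is merely a spanning subgraph of $S$ and need not be isomorphic to $S$, so new induced claws and new induced copies of $S$ appear freely under edge addition. This is precisely the difficulty that the closure machinery in Section~2 and Section~3 of the paper (Lemma~\ref{LeRy}, Lemma~\ref{LeCa}, Theorem~\ref{ThBrRyFa}, Theorems~\ref{ThSPi}--\ref{ThSN}) exists to overcome. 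As a consequence, your ``crucial observation'' that the Bondy--Chv\'atal closure $H$ is claw-free also fails: an induced claw of $H$ need not induce a claw in $G$, so the $K_{1,3}$-o-heavy hypothesis supplies no heavy pair inside it, and there is no reason its end-vertices should have been joined. (Note also that $\clc$ in this paper denotes \v{C}ada's c-closure, a local neighborhood-completion operation, not the Bondy--Chv\'atal closure; \v{C}ada introduced it exactly because the global degree closure does not preserve the needed structure.)

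Even if you replace your closure by the correct one, the template ``show $\clc(G)$ is $S$-free, then apply Theorem~\ref{ThFaGo}'' does not go through for every $S$ on the list. For instance, for $S=N$ the c-closure of a claw-o-heavy, $N$-o-heavy graph need not be $N$-free; the known route (mirrored in Section~4 of this paper for the c-heavy analogue) is to show that a weaker property such as $N$-p-heaviness survives the closure and then to invoke Brousek's Theorem~\ref{ThBr} on the family $\mathcal{P}$ to derive a contradiction from the absence of heavy pairs in the closure (Lemma~\ref{LeHeavyPair}). There is also a mismatch of hypotheses: Theorem~\ref{ThFaGo} assumes $n\geq 10$ while the statement you are proving does not, so small orders would need separate treatment. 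Finally, your necessity direction is reduced correctly in outline to excluding $S=P_6$ and $S=Z_3$, but the explicit non-hamiltonian $2$-connected graphs that are claw-o-heavy and $P_6$-o-heavy (respectively $Z_3$-o-heavy) are the entire content of that direction, and ``line-graph-type constructions with near-universal vertices'' is not yet a construction. As it stands the proposal is a plausible outline with the two hardest steps either missing or based on an incorrect premise.
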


Following \cite{NingZhang}, we introduce another type of heavy
subgraph condition motivated by Fan's condition \cite{Fan}. Let $G$ be a graph
and $G'$ be an induced subgraph of $G$. If for each two vertices
$u,v\in V(G')$ with $d_{G'}(u,v)=2$, either $u$ or $v$ is heavy in
$G$, then $G'$ is an \emph{f-heavy subgraph} of $G$ (or $G'$ is
\emph{f-heavy} in $G$). For a given graph $S$, the graph $G$ is
\emph{$S$-f-heavy} if every induced subgraph of $G$ isomorphic to
$S$ is f-heavy. A claw-f-heavy graph is also called a \emph{2-heavy}
graph (see \cite{BroersmaRyjacekSchiermeyer}).

Note that an $S$-free graph is trivially $S$-f-heavy, but in
general, an $S_1$-f-heavy graph is not necessarily $S_2$-f-heavy
when $S_1$ is an induced subgraph of $S_2$. In Figure 2, we show the
implication relations among the conditions being $S$-f-heavy for the
graphs $S$ listed in Theorem \ref{ThFaGo}.

\begin{center}
\begin{picture}(175,95)

\graphbox{20}{40}{$P_3$} \graphbox{50}{10}{$W$}
\graphbox{80}{10}{$Z_2$} \graphbox{110}{10}{$Z_3$}
\graphbox{50}{40}{$P_4$} \graphbox{80}{40}{$P_5$}
\graphbox{110}{40}{$P_6$} \graphbox{140}{40}{$C_3$}
\graphbox{50}{70}{$Z_1$} \graphbox{80}{70}{$B$}
\graphbox{110}{70}{$N$}

\put(35,40){\vector(1,-1){15}} \put(65,40){\vector(1,-1){15}}
\put(95,40){\vector(1,-1){15}} \put(65,25){\line(5,1){75}}
\put(140,40){\vector(1,0){0}} \put(95,25){\vector(3,1){45}}
\put(125,25){\vector(1,1){15}} \put(35,47.5){\vector(1,0){15}}
\put(65,47.5){\vector(1,0){15}} \put(95,47.5){\vector(1,0){15}}
\put(125,47.5){\vector(1,0){15}} \put(35,55){\vector(1,1){15}}
\put(65,77.5){\vector(1,0){15}} \put(95,77.5){\vector(1,0){15}}
\put(65,55){\vector(3,1){45}} \put(125,70){\vector(1,-1){15}}

\end{picture}

{\small Figure 2. $S_1\rightarrow S_2$: Being $S_1$-f-heavy implies
being $S_2$-f-heavy}
\end{center}

We remark that f-heavy conditions cannot compare with o-heavy
conditions in general. For example, every $P_3$-o-heavy graph is
$P_3$-f-heavy; and every claw-f-heavy graph is claw-o-heavy, but for
the conditions being $N$-o-heavy and being $N$-f-heavy, no one can
imply the other.

Motivated by Theorem \ref{ThLiRyWaZh}, Ning and Zhang
\cite{NingZhang} characterized pairs of f-heavy subgraphs for a
2-connected graph to be hamiltonian, which not only is a new
extension of Theorem \ref{ThFaGo} but also unifies some previous
theorems in \cite{BedrossianChenSchelp,ChenWeiZhang,LiWeiGao}.

\begin{theorem}[\cite{NingZhang}]\label{ThNiZh}
Let $R$ and $S$ be connected graphs with $R,S\neq P_3$ and let $G$
be a 2-connected graph of order $n\geq 10$. Then $G$ being
$R$-f-heavy and $S$-f-heavy implies $G$ is hamiltonian if and only
if (up to symmetry) $R=K_{1,3}$ and $S=P_4,P_5,P_6,Z_1,Z_2,Z_3,B,N$
or $W$.
\end{theorem}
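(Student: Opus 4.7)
My plan is to treat the two directions of the characterization separately.

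For the \emph{necessity}, I would note that any $H$-free graph is trivially $H$-f-heavy, so the infinite families of $R$-free, $S$-free, 2-connected non-hamiltonian graphs underlying Theorem~\ref{ThFaGo} already show that no pair $(R,S)$ outside the Faudree--Gould list can force hamiltonicity under our weaker f-heavy hypothesis. The single discrepancy is that their list contains $C_3$ while ours does not; I would account for this by observing that any two vertices of $C_3$ are adjacent, so $C_3$-f-heavy is a vacuous condition, and $(K_{1,3},C_3)$-f-heavy collapses to being 2-connected and claw-f-heavy, which is well known not to force hamiltonicity (sufficiently large 2-connected claw-free non-hamiltonian graphs furnish the required order-at-least-$10$ counterexamples).

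For the \emph{sufficiency}, fix $R=K_{1,3}$ and a listed $S$, and let $G$ be 2-connected, claw-f-heavy, and $S$-f-heavy with $|V(G)|\geq 10$. Using the implications summarized in Figure~2, the $S$-f-heavy conditions across the listed graphs are partially ordered by implication, and so it will suffice to treat the sinks of the restricted order within the list, namely $S\in\{P_6,Z_2,Z_3,W,N\}$; the other cases $S\in\{P_4,P_5,Z_1,B\}$ follow by monotonicity. For each sink case, my approach will be: first, to pass to a related graph via the Ryj\'a\v{c}ek closure or an analogue adapted to heavy subgraphs, preserving both the $S$-f-heavy condition and the question of hamiltonicity; second, in the resulting claw-free graph, to upgrade $S$-f-heaviness to $S$-freeness (possibly modulo a small set of exceptional configurations) and invoke the relevant part of Theorem~\ref{ThFaGo}, using $|V(G)|\geq 10$ to rule out the sporadic exceptions $L_1$ and $L_2$ when $S=Z_3$.

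The main obstacle I anticipate is the first of these steps for $S\in\{N,W,B\}$. These graphs consist of a triangle with pendant paths, and closure operations typically add edges within the neighborhoods of designated vertices; such additions can create or destroy induced copies of $N$, $W$, or $B$ and can shorten distances within them, potentially violating $S$-f-heaviness. I would therefore need to verify carefully that the chosen closure preserves this hypothesis, or else restrict the closure so as to avoid the dangerous local configurations. Should closure methods prove too delicate, I would fall back on a direct approach: take a longest cycle $C$ that is not hamiltonian, pick $v\notin V(C)$, analyze induced subgraphs containing $v$ via the claw-f-heavy and $S$-f-heavy hypotheses, and apply a Fan-style rotation-extension argument, exploiting heavy pairs at distance~$2$, to contradict the maximality of $C$.
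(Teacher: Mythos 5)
First, a point of calibration: the paper does not prove Theorem~\ref{ThNiZh} at all --- it is quoted from \cite{NingZhang} as background, and the authors explicitly say that their own closure-based methods are ``completely different'' from the ones used there. The present paper only re-derives the subcases $S=Z_1,B,N$ (as corollaries of Theorem~\ref{ThNpHeavy}, via the $N$-p-heavy notion). So your proposal cannot be checked against an in-paper argument; I can only assess it on its merits.

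Your necessity direction is correct and complete: $S$-free implies $S$-f-heavy, so the Faudree--Gould families dispose of every pair outside their list, and the vacuity of $C_3$-f-heaviness together with 2-connected claw-free non-hamiltonian graphs of order at least $10$ removes $C_3$. The reduction of sufficiency to the sinks $P_6,Z_2,Z_3,W,N$ via Figure~2 is also valid. The genuine gap is the central step of your sufficiency plan: even granting a closure that preserves $S$-f-heaviness, you cannot ``upgrade $S$-f-heaviness to $S$-freeness'' in the closed graph. The mechanism that makes such an upgrade work in this paper is Lemma~\ref{LeHeavyPair} (the c-closure has no heavy \emph{pair}) combined with the fact that a c-heavy copy of $P_i$ ($i\geq 5$) or $Z_i$ ($i \geq 3$) forces a heavy pair (Corollary~\ref{CoFPi}, Theorem~\ref{ThSZi}). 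An f-heavy copy of $S$ forces only heavy \emph{vertices}, and these may be mutually adjacent: a $P_6=v_1v_2\cdots v_6$ in which exactly $v_3,v_4$ are heavy, or an $N$ in which exactly the three triangle vertices are heavy, satisfies the f-heavy condition yet contains no heavy pair, so its survival in the closure contradicts nothing and Theorem~\ref{ThFaGo} cannot be invoked. Note that this failure already occurs for $S=P_6$, a case you classify as unproblematic; your worry about preserving f-heaviness under local completion is real but is not the only, or even the main, obstruction. Your fallback (longest cycle plus rotation--extension exploiting heavy vertices at distance $2$) points toward what \cite{NingZhang} actually does, but as written it is a one-sentence gesture that carries the entire burden of the proof, so the proposal as it stands does not constitute a proof of the sufficiency direction.
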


Now we will put our views to another new sufficient condition for
hamiltonicity of graphs due to Hu \cite{Hu}. Some previous theorems
can be obtained from Hu's theorem as corollaries (see
\cite{BedrossianChenSchelp,LiWeiGao}).

\begin{theorem}[\cite{Hu}]\label{ThHu}
Let $G$ be a 2-connected graph. If $G$ is 2-heavy
and every induced $P_4$ in an induced $N$ of $G$
contains a heavy vertex, then $G$ is hamiltonian.
\end{theorem}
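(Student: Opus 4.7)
My approach is to prove Hu's theorem via closure theory, in the spirit of Ryj\'a\v{c}ek's strategy for claw-free graphs. The plan is first to apply an appropriate heavy-claw closure to $G$---a variant tailored to 2-heavy graphs, analogous to the Broersma--Ryj\'a\v{c}ek--Schiermeyer closure---obtaining a graph $G^{*}$ such that (i) $G$ is hamiltonian if and only if $G^{*}$ is, (ii) $G^{*}$ is still 2-heavy, and (iii) $G^{*}$ is either a complete graph or the line graph $L(H)$ of some triangle-free graph $H$. The complete case is immediate, so we may assume $G^{*}=L(H)$.

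The crucial step is to verify that the $N$-c-heavy condition (equivalently, Hu's $P_4$-in-$N$ condition) is preserved by this closure. Each closure operation locally completes a particular neighborhood into a clique, and I would show that any induced net appearing in $G^{*}$ can be traced back to a configuration already present in $G$, for which the hypothesis supplies a heavy vertex in each of the three distinguished induced $P_4$'s; heaviness itself is preserved since degrees only increase under the closure. Having propagated the conditions to $G^{*}=L(H)$, I would translate to the line-preimage graph $H$: by the Harary--Nash-Williams correspondence, $L(H)$ is hamiltonian if and only if $H$ contains a dominating closed trail. Induced claws in $L(H)$ correspond to vertices of $H$ of degree at least $3$, and by triangle-freeness of $H$ an induced net in $L(H)$ corresponds to such a vertex $u$ with three neighbors $v_1, v_2, v_3$, each $v_i$ carrying a further edge $v_i w_i$ with the $w_i$'s mutually distinct and disjoint from the $v_j$'s. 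Under this dictionary, the 2-heavy and $N$-c-heavy hypotheses on $G^{*}$ convert into usable degree-sum conditions in $H$ controlling pairs of incident edges at high-degree vertices.

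To conclude, I would take a longest closed trail $T$ in $H$, suppose it is not dominating, and use the translated degree conditions on edges near $T$ to extend $T$, reaching a contradiction with maximality. The hard part is proving that the $N$-c-heavy condition passes through the closure: unlike the symmetric f-heavy and o-heavy conditions, which behave naturally under standard closures, the $N$-c-heavy condition pins heavy vertices to very specific positions of a very specific induced subgraph, so one must argue carefully that no closure step introduces a new induced net whose three distinguished $P_4$'s simultaneously avoid heavy vertices. Once this passage is navigated, the remaining arguments about dominating closed trails in $H$ follow along well-established lines.
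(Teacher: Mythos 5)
Your overall strategy (a heavy-claw closure followed by a translation to the line-graph preimage) is the classical one, but it founders exactly at the step you yourself flag as ``the hard part'': the $N$-c-heavy condition is \emph{not} preserved by the closure operation. The paper proves this explicitly in the `only if' part of Theorem \ref{ThSA}: the graph $G_2$ of Figure 6 is claw-o-heavy and $N$-c-heavy, yet its c-closure contains an induced $N$ (on $\{a_1,b_1,a_2,b_2,a_3,b_3\}$) that is not c-heavy, because after the closure the only heavy vertices of that net lie on a single pendant edge. So your plan to ``trace back'' every induced net of $G^{*}$ to a configuration of $G$ for which the hypothesis supplies the required heavy vertices cannot work as stated, and you offer no argument for why your 2-heavy variant of the closure would behave any better than the c-closure does in the closely neighbouring o-heavy setting. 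The paper's way around this is to weaken the condition to one that \emph{is} stable: an induced $N$ is p-heavy if two of its vertices, not both on the same pendant edge, have degree \emph{sum} at least $n$ (rather than each being heavy). Every $N$-c-heavy graph is $N$-p-heavy, and Theorem \ref{ThSN} shows that p-heaviness survives each closure step; the proof there splits into cases according to which edges of the new net lie inside the completed neighborhood $N(x)$, in each case exhibiting a claw or a net of the original graph to which the hypotheses apply. Some such relaxation is the missing idea in your proposal.

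The endgame also diverges, to your disadvantage. Rather than passing to the preimage $H$ and running a longest-dominating-closed-trail argument (which you leave entirely unexecuted, and which would require converting the degree conditions into edge-degree conditions in $H$ and then a delicate extension argument), the paper finishes in a few lines using Brousek's theorem (Theorem \ref{ThBr}): a non-hamiltonian 2-connected claw-free graph contains an induced member of $\mathcal{P}$, two disjoint triangles joined by three paths or triangles. Each of these triangles lies in an induced net, so $N$-p-heaviness forces two vertices of each triangle to have degree sum at least $n$; combined with Lemma \ref{LeHeavyPair} (the c-closure has no heavy pair) and the nonadjacency between the two triangles, a pigeonhole argument on the maximum-degree vertex produces a heavy pair, a contradiction. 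Note finally that the paper obtains Hu's theorem as a corollary of the stronger claw-o-heavy statement (Theorem \ref{ThNpHeavy}); a closure tailored only to 2-heavy graphs would at best recover the weaker original form.
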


In fact, we can see that the cases $S=Z_1,B,N$ in Theorem \ref{ThNiZh} can be
deduced from Hu's theorem. This motivates us to consider the
counterpart results for other subgraphs. Armed with this idea, we
first propose the following definition.

\begin{definition}
Let $G$ be a graph and $G'$ be an induced subgraph of $G$. If for
every maximal clique $C$ of $G'$, each nontrivial component of
$G'-C$ contains a heavy vertex of $G$, then $G'$ is a
\emph{clique-heavy} (or in short, \emph{c-heavy}) subgraph of $G$.
For a given graph $S$, $G$ is \emph{$S$-c-heavy} if every induced
subgraph of $G$ isomorphic to $S$ is c-heavy.
\end{definition}

In Figure 3, we show the implication relations of the conditions
being $S$-c-heavy for the graphs $S$ listed in Theorem \ref{ThFaGo}.

\begin{center}
\begin{picture}(175,95)

\graphbox{20}{70}{$Z_1$} \graphbox{50}{70}{$B$}
\graphbox{80}{70}{$N$} \graphbox{20}{40}{$P_4$}
\graphbox{50}{40}{$P_5$} \graphbox{80}{40}{$P_6$}
\graphbox{110}{40}{$C_3$} \graphbox{140}{40}{$P_3$}
\graphbox{20}{10}{$Z_2$} \graphbox{50}{10}{$W$}
\graphbox{80}{10}{$Z_3$}

\put(35,77.5){\vector(1,0){15}} \put(65,77.5){\vector(1,0){15}}
\put(35,47.5){\vector(1,0){15}} \put(65,47.5){\vector(1,0){15}}
\put(35,17.5){\vector(1,0){15}} \put(65,25){\vector(3,1){45}}
\put(35,55){\vector(1,1){15}} \put(57.5,40){\vector(0,-1){15}}
\put(95,70){\vector(1,-1){15}} \put(95,47.5){\vector(1,0){15}}
\put(95,25){\vector(1,1){15}} \put(125,47.5){\vector(1,0){15}}
\put(125,47.5){\vector(-1,0){0}}

\end{picture}

{\small Figure 3. $S_1\rightarrow S_2$: Being $S_1$-c-heavy implies
being $S_2$-c-heavy.}
\end{center}

So Theorem \ref{ThHu} can be stated as every 2-connected
claw-f-heavy and $N$-c-heavy graph is hamiltonian. As we will show
below, this can be extended to that every 2-connected claw-o-heavy
and $N$-c-heavy graph is hamiltonian.

We remark that saying a graph is claw-c-heavy is meaningless (if we
remove a maximal clique from a claw, then only isolated vertices
remain). Motivated by Theorems \ref{ThFaGo}, \ref{ThLiRyWaZh} and
\ref{ThNiZh}, we naturally propose the following problem.

\begin{problem}\label{PrcHeavy}
Which connected graphs $S$ imply that every 2-connected
claw-free (or claw-f-heavy or claw-o-heavy) and $S$-c-heavy graph is
hamiltonian?
\end{problem}

The solution to Problem 1 is one of the main results in this paper.

\begin{theorem}\label{ThcHeavy}
Let $S$ be a connected graph of order at least 3 and let $G$ be a
2-connected claw-o-heavy graph of order $n\geq 10$. Then $G$ being
$S$-c-heavy implies $G$ is hamiltonian if and only if $S=P_4,P_5,P_6,Z_1,Z_2,Z_3,B,N$ or $W$.
\end{theorem}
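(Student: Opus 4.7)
The plan is to establish the characterization in both directions. For the \emph{only if} part, I would reuse the infinite families of non-hamiltonian $2$-connected $\{K_{1,3},S\}$-free graphs that already witness the corresponding direction of Theorem~\ref{ThFaGo}. Each such graph is trivially claw-o-heavy and $S$-c-heavy, so it rules out every connected $S$ of order at least $3$ outside the listed set. The case $S = C_3$ is excluded because its $c$-heavy hypothesis is vacuous: removing the unique maximal clique of $C_3$ leaves no non-trivial component.

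For the \emph{if} direction, the implication diagram in Figure~3 collapses the case analysis to the sinks $S \in \{N, W, P_6, Z_3\}$: being $Z_1$-c-heavy implies being $B$-c-heavy, hence $N$-c-heavy; being $P_4$-c-heavy implies being $P_5$-c-heavy, hence $P_6$-c-heavy and (via $P_5 \to W$) $W$-c-heavy; and being $Z_2$-c-heavy also implies being $W$-c-heavy. My approach is closure-theoretic. I would invoke \v{C}ada's claw-o-heavy closure $\clc$ from \cite{Cada}, which converts a $2$-connected claw-o-heavy graph $G$ into a $2$-connected claw-free graph $\clc(G)$ with the same hamiltonicity. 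The argument then has two ingredients: first, to show that $\clc(G)$ inherits the $S$-c-heavy property, using that each closure-added edge joins the non-adjacent vertices of a heavy pair inside an induced claw, so both endpoints are heavy in $G$ and no new induced copy of $S$ can violate $c$-heaviness; second, to show that a $2$-connected claw-free $S$-c-heavy graph of order $n \geq 10$ is hamiltonian. The latter should reduce, via Ryj\'a\v{c}ek's line-graph preimage, to Theorem~\ref{ThFaGo}, perhaps after a further structural refinement that upgrades $S$-c-heaviness to outright $S$-freeness under the claw-free hypothesis.

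The main obstacle should appear for $S = N$ and $S = W$, where $S$ itself contains a non-trivial maximal clique (a triangle). The $c$-heavy hypothesis is then weaker than the $o$-heavy or $f$-heavy conditions treated in Theorems~\ref{ThLiRyWaZh} and \ref{ThNiZh}, so tracking how the closure-added edges interact with induced copies of $N$ or $W$ requires a careful local case analysis around each such edge. These are precisely the cases in which the proof should recover and improve Hu's Theorem~\ref{ThHu}. The case $S = Z_3$ further demands explicit treatment of the exceptional graphs $L_1$ and $L_2$ of Theorem~\ref{ThFaGo}, verifying that neither arises as the closure of a $2$-connected claw-o-heavy $Z_3$-c-heavy graph of order at least $10$.
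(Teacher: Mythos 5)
Your overall skeleton (collapse to the sinks $P_6$, $N$, $W$, $Z_3$ via Figure~3, pass to \v{C}ada's closure, reduce to forbidden-subgraph theorems, and dispose of $L_1,L_2$ for $Z_3$) matches the paper, and your ``only if'' argument is essentially the paper's. But the central step of your ``if'' direction has a genuine gap, in two layers. First, the mechanism you give for why $\clc(G)$ should inherit $S$-c-heaviness is wrong: a closure-added edge joins the two vertices of a \emph{heavy pair}, i.e.\ $d(u)+d(v)\geq n$, which does not make both endpoints heavy vertices (one may have degree $3$ and the other $n-3$), so you cannot conclude that new induced copies of $S$ automatically contain heavy vertices. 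Second, and more fundamentally, the statement you are trying to prove is false precisely in the cases you flag as the ``main obstacle'': by Theorem~\ref{ThSA} the class of claw-o-heavy and $S$-c-heavy graphs is \emph{not} c-stable for $S=N$, $S=W$ or $S=Z_2$ (the graphs $G_1$ and $G_2$ of Figure~6 are explicit counterexamples for $Z_2$ and $N$), so no amount of ``careful local case analysis around each added edge'' will establish inheritance of c-heaviness there.

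The paper gets around this in two different ways that your proposal does not anticipate. For $S=N$ it introduces the strictly weaker notion of $N$-\emph{p}-heavy, proves that \emph{this} class is c-stable (Theorem~\ref{ThSN}), and then, instead of upgrading to $N$-freeness, combines Lemma~\ref{LeHeavyPair} (the closure has no heavy pair) with Brousek's Theorem~\ref{ThBr}: a non-hamiltonian closure would contain an induced member of $\mathcal{P}$, whose two triangles each carry a pair of large degree sum, forcing a heavy pair and a contradiction. For $S=W$ (and hence $Z_2$) the paper abandons the closure entirely and simply observes that $W$-c-heavy implies $W$-o-heavy, so Theorem~\ref{ThLiRyWaZh} applies. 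Your ``upgrade c-heaviness to freeness'' idea is sound only for $P_6$ and $Z_3$, where the no-heavy-pair property of the closure (which you do not invoke) is exactly what converts stability into $P_6$-freeness (Corollary~\ref{CoFPi}) and $Z_3$-freeness (Theorem~\ref{ThSZi}, itself a long region-based argument rather than a local one). As written, your plan cannot be completed for $N$, $W$ or $Z_2$.
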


Note that the only subgraphs appearing in Theorem \ref{ThFaGo} but
missed here are $P_3$ and $C_3$. Also note that every graph is
$P_3$-c-heavy and $C_3$-c-heavy and there exist 2-connected
claw-free graphs which are non-hamiltonian. By Theorem \ref{ThFaGo}
and the fact that every claw-free (claw-f-heavy) graph is
claw-o-heavy, we can see that Theorem \ref{ThcHeavy} gives a complete
solution to Problem \ref{PrcHeavy}.

We point out that a special case of our work results in a new proof of a stronger version of
Theorem \ref{ThHu}.

\begin{theorem}
Let $G$ be a 2-connected graph. If $G$ is claw-o-heavy and $N$-c-heavy, then $G$ is hamiltonian.
\end{theorem}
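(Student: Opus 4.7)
The plan is to deduce this theorem as the $S=N$ specialisation of Theorem \ref{ThcHeavy}. Since $N$ lies in the permitted list $\{P_4,P_5,P_6,Z_1,Z_2,Z_3,B,N,W\}$, that theorem already gives the conclusion whenever $n\geq 10$. Before anything else I would unpack what $N$-c-heavy means concretely for $N=N_{1,1,1}$ with triangle $\{a,b,c\}$ and pendant vertices $a',b',c'$: the maximal cliques are the triangle itself and the three pendant edges $aa'$, $bb'$, $cc'$. Deletion of the triangle leaves only isolated pendant vertices, which is vacuous; deletion of, say, $\{a,a'\}$ leaves an induced $P_4$ on $b',b,c,c'$. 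Hence $N$-c-heavy is precisely the hypothesis that each of the three side-$P_4$'s of any induced $N$ contains a heavy vertex, which is exactly Hu's condition in Theorem \ref{ThHu}. Since claw-o-heavy is strictly weaker than the $2$-heavy (claw-f-heavy) condition used there, this indeed gives the advertised strengthening.

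What remains is to verify the theorem for $3\leq n\leq 9$, the range that falls outside the hypothesis of Theorem \ref{ThcHeavy}. For $n\leq 4$ every $2$-connected graph is already hamiltonian. For $5\leq n\leq 9$ I would enumerate, up to isomorphism, the finitely many $2$-connected non-hamiltonian graphs on $n$ vertices and show that in each case either the claw-o-heavy hypothesis fails (an induced claw whose two highest-degree leaves have degree sum strictly less than $n$) or the $N$-c-heavy hypothesis fails (an induced $N$ one of whose side-$P_4$'s contains no vertex of degree at least $n/2$). For example, $K_{2,3}$ already fails claw-o-heavy, since the claw centred at a degree-$3$ vertex has any two leaves summing to $2+2<5$. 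The remaining small obstructions (theta graphs, small subdivisions of $K_4$, and similar constructions) are treated in the same spirit.

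The genuine content lies inside Theorem \ref{ThcHeavy}, and that is where the hard work is; presumably it proceeds by a closure-theoretic reduction (consistent with the paper's keywords) that simplifies a claw-o-heavy graph while preserving both the $N$-c-heavy condition and the (non)existence of a Hamilton cycle, followed by a structural case analysis on the simplified graph. By contrast the specialisation to $S=N$, together with the finite check for $n\leq 9$, is essentially bookkeeping, so the main obstacle I anticipate for this particular corollary is not in the deduction itself but in the underlying proof of Theorem \ref{ThcHeavy}, which this statement inherits in full.
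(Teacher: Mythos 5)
Your unpacking of the $N$-c-heavy condition is correct (it is exactly Hu's hypothesis on the three side-$P_4$'s of each induced $N$), and the specialisation of Theorem~\ref{ThcHeavy} does give the conclusion for $n\geq 10$. The genuine gap is the range $5\leq n\leq 9$: you propose to enumerate all $2$-connected non-hamiltonian graphs of these orders and check that each violates one of the two hypotheses, but you do not carry this out, and it is not the ``bookkeeping'' you claim. Already on $9$ vertices the class of $2$-connected non-hamiltonian graphs is large (the family $\mathcal{P}$ of Theorem~\ref{ThBr} contributes $P_{3,3,3}$, $P_{3,3,T}$, $L_2=P_{3,T,T}$ and $L_1=P_{T,T,T}$ there, and those are only the claw-free ones), so a single worked example ($K_{2,3}$) plus a gesture at ``theta graphs and similar constructions'' leaves the small-order cases unproved.

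The paper avoids this issue entirely, and you could too by looking inside the proof of Theorem~\ref{ThcHeavy} rather than citing only its statement: the hypothesis $n\geq 10$ is used there solely in the case $S=Z_3$ (to exclude $\clc(G)=L_1$ or $L_2$, which have order $9$) and in the ``only if'' direction. For $S=N$ the paper introduces the weaker $N$-p-heavy property, shows it is preserved under the c-closure (Theorem~\ref{ThSN}, Corollary~\ref{CoSN}), and then argues directly on $\clc(G)$: if $\clc(G)$ were non-hamiltonian it would contain an induced member of $\mathcal{P}$ by Theorem~\ref{ThBr}, and applying the $N$-p-heavy condition to the two triangles of that subgraph produces a heavy pair in $\clc(G)$, contradicting Lemma~\ref{LeHeavyPair}. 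No order restriction enters anywhere, so the theorem holds for every $2$-connected graph. Your route is salvageable in principle, since the finite statement you defer to is true, but as written you must either actually perform that enumeration or observe that the $S=N$ branch of the argument is order-free; at present you do neither.
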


Some previous theorems can also be obtained from this theorem
as corollaries in a unified way.

\begin{corollary}[\cite{Hu}]
Let $G$ be a graph. If $G$ is claw-f-heavy and $N$-c-heavy, then $G$ is hamiltonian.
\end{corollary}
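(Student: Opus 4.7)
The plan is to derive this corollary directly from the preceding theorem by verifying that the stronger hypothesis of being \emph{claw-f-heavy} forces the weaker hypothesis of being \emph{claw-o-heavy}. Once that reduction is in place, the remaining assumption that $G$ is $N$-c-heavy (together with the $2$-connectedness implicit in Hu's original setting, which the preceding theorem also requires) lets us invoke the theorem directly.

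First I would unpack the definitions on a single induced claw. Let $K$ be an arbitrary induced copy of $K_{1,3}$ in $G$, with center $x$ and independent leaves $y_1, y_2, y_3$. Since $x$ is adjacent to each $y_i$ in $K$, the only pairs of vertices of $K$ at distance exactly $2$ within $K$ are the three non-edges $\{y_i, y_j\}$ with $i \neq j$. The claw-f-heavy hypothesis therefore says that in each of these three leaf-pairs at least one vertex has degree at least $n/2$ in $G$. A pigeonhole step then forces at least two of $y_1, y_2, y_3$ to be heavy in $G$: if only one of them were heavy, then the non-edge joining the remaining two leaves would be a distance-$2$ pair of $K$ with no heavy endpoint, contradicting the claw-f-heavy assumption. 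Supposing without loss of generality that $y_1$ and $y_2$ are both heavy, we obtain $d(y_1) + d(y_2) \geq n$; since $y_1 y_2 \notin E(G)$, this pair witnesses that $K$ is o-heavy in $G$. As $K$ was arbitrary, $G$ is claw-o-heavy.

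With the reduction in hand, the preceding theorem (every $2$-connected claw-o-heavy and $N$-c-heavy graph is hamiltonian) immediately delivers the conclusion. There is no substantive obstacle here: the corollary is a strict weakening of the theorem, and the bridge is the elementary pigeonhole argument above. The only mild subtlety is keeping track of the implicit $2$-connectedness assumption on $G$, which is standard for this family of closure-style hamiltonicity results and is inherited from Hu's original statement.
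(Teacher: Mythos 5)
Your proposal is correct and matches the paper's intended derivation: the paper obtains this corollary from Theorem 8 via the observation (stated in its introduction) that every claw-f-heavy graph is claw-o-heavy, which is exactly the pigeonhole argument you spell out on the three leaf-pairs of an induced claw. Your remark about the implicit $2$-connectedness is also apt, since the corollary as printed omits it.
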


\begin{corollary}[\cite{NingZhang}]
Let $G$ be a graph. If $G$ is claw-o-heavy and $N$-f-heavy, then $G$ is hamiltonian.
\end{corollary}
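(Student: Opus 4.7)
The plan is to derive this corollary directly from the preceding (unnumbered) theorem, which asserts that every 2-connected claw-o-heavy and $N$-c-heavy graph is hamiltonian. (The 2-connectedness hypothesis is implicit here, inherited from the parent theorem; without it the statement fails trivially.) All that is needed is to verify that, for any graph, being $N$-f-heavy forces being $N$-c-heavy; the conclusion then follows from the parent theorem applied to $G$.

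The key step is a short case analysis on the maximal cliques of the net. Fix an induced copy of $N$ in $G$ with triangle vertices $a,b,c$ and respective pendants $a',b',c'$, so that $aa',bb',cc'$ are the three pendant edges. The maximal cliques of this induced $N$ are exactly the triangle $\{a,b,c\}$ and the three edges $\{a,a'\}$, $\{b,b'\}$, $\{c,c'\}$. Deleting the triangle leaves only the three isolated vertices $a',b',c'$, so the c-heavy requirement is vacuous in that case. Deleting a pendant edge, say $\{a,a'\}$, leaves the induced path $b'\,b\,c\,c'$ (a $P_4$) as the unique nontrivial component. Now $b'$ and $c$ have distance $2$ in the induced net (via $b$), so the $N$-f-heavy hypothesis forces at least one of them to be a heavy vertex of $G$; this heavy vertex lies in the required $P_4$. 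The analogous argument at $\{b,b'\}$ and $\{c,c'\}$ is symmetric, using the pairs $\{a',c\}$ (or $\{a,c'\}$) and $\{a',b\}$ (or $\{a,b'\}$), each at distance $2$ in the net.

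Hence every induced $N$ in $G$ is c-heavy, i.e., $G$ is $N$-c-heavy. Combined with the assumed claw-o-heaviness, the preceding theorem yields that $G$ is hamiltonian.

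There is no real obstacle in this argument: once the maximal cliques of the net are enumerated, each removal either leaves a trivial configuration or a single $P_4$ whose two internal-to-the-net endpoints sit at distance $2$ in $N$, so the f-heavy condition applies off the shelf. All the substantive work is concentrated in the parent theorem, whose proof will presumably invoke the closure-theoretic machinery developed elsewhere in the paper; the corollary itself is a routine implication.
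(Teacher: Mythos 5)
Your proposal is correct and follows exactly the route the paper intends: verify that $N$-f-heavy implies $N$-c-heavy by checking the maximal cliques of the net (the triangle leaves only trivial components, and deleting a pendant edge leaves a $P_4$ containing a distance-two pair of the net, hence a heavy vertex), then invoke the preceding theorem that every 2-connected claw-o-heavy and $N$-c-heavy graph is hamiltonian. Your remark that the 2-connectedness hypothesis must be read as implicit in the corollary's statement is also accurate.
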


\begin{corollary}[\cite{LiWeiGao}]
Let $G$ be a graph. If $G$ is claw-f-heavy and $B$-f-heavy, then $G$ is hamiltonian.
\end{corollary}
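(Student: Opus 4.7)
The plan is to derive this corollary directly from the main theorem (Theorem \ref{ThcHeavy}) with $S = B$, together with two easy monotonicity-type reductions. So I would proceed in three steps.

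First, I would recall from the exposition of the excerpt that every claw-f-heavy graph is claw-o-heavy (this is stated explicitly just after Figure 2). So I only need to promote the $B$-f-heavy hypothesis to a $B$-c-heavy hypothesis.

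Second, I would prove: every $B$-f-heavy graph is $B$-c-heavy. Label an induced bull $B'$ with triangle vertices $a,b,c$ and pendants $a'$ adjacent to $a$ and $b'$ adjacent to $b$. The maximal cliques of $B'$ are the triangle $T=\{a,b,c\}$ and the two edges $E_1=\{a,a'\}$, $E_2=\{b,b'\}$. After removing $T$ only the isolated vertices $a',b'$ remain, so there is no nontrivial component to check. After removing $E_1$, the nontrivial component is the path $c$--$b$--$b'$; since $b'$ and $c$ lie at distance $2$ in $B'$, the $B$-f-heavy assumption forces one of them to be heavy in $G$, so the component contains a heavy vertex. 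The case of $E_2$ is symmetric using the distance-$2$ pair $\{a',c\}$. Thus $B'$ is c-heavy in $G$, proving the implication.

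Third, with $G$ now known to be claw-o-heavy and $B$-c-heavy, I would invoke Theorem \ref{ThcHeavy} (which lists $B$ among the admissible choices of $S$) to conclude that $G$ is hamiltonian. The hypothesis that $G$ is $2$-connected is implicit in this corollary, as is standard in this line of work; the order assumption $n\ge 10$ in Theorem \ref{ThcHeavy} is easily handled for small orders by direct inspection (or by noting that only finitely many exceptions would survive and can be verified against the claw-f-heavy/$B$-f-heavy conditions).

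There is no genuine obstacle here; the content is entirely in Theorem \ref{ThcHeavy}. The only slightly delicate point is the verification that $B$-f-heavy implies $B$-c-heavy, and even that reduces to matching the two nontrivial components of $B$ minus an edge-maximal-clique with the corresponding distance-$2$ pair in the bull, which I have done above.
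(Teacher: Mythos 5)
Your central reduction is correct, and it is essentially the paper's intended bookkeeping: the maximal cliques of the bull are the triangle and the two pendant edges; deleting a pendant edge leaves a single nontrivial component (a $P_3$), and the distance-two pair you select ($\{b',c\}$, resp.\ $\{a',c\}$) lies entirely inside that component, so the $B$-f-heavy hypothesis supplies the required heavy vertex. Together with the fact, stated after Figure 2, that every claw-f-heavy graph is claw-o-heavy, this correctly promotes the hypotheses to claw-o-heavy and $B$-c-heavy. Treating 2-connectivity as implicit also matches the paper, whose corollary statement inherits the same omission.

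The genuine weak point is your final step. The paper does not derive this corollary from Theorem \ref{ThcHeavy}; it derives it from the theorem stated immediately before the block of corollaries (every 2-connected claw-o-heavy and $N$-c-heavy graph is hamiltonian; see also Theorem \ref{ThNpHeavy}), which carries \emph{no} order restriction. By routing through Theorem \ref{ThcHeavy} with $S=B$ you import the hypothesis $n\geq 10$, and your disposal of the small orders is not an argument: ``direct inspection'' of all 2-connected claw-f-heavy and $B$-f-heavy graphs on at most $9$ vertices is a nontrivial finite verification that you have not performed, and the remark that ``only finitely many exceptions would survive'' establishes nothing. The repair is cheap and stays inside the paper: either use the implication $B$-c-heavy $\Rightarrow$ $N$-c-heavy recorded in Figure 3 (or note that your Step 2 argument, applied to the induced bull on $\{a,b,c,b_1,c_1\}$, resp.\ $\{a,b,c,a_1,b_1\}$, inside an induced net, shows directly that $B$-f-heavy implies $N$-c-heavy), and then invoke the order-free $N$-c-heavy theorem; or observe that the proof of the cases $S=Z_1,B,N$ in Section 4 rests on Theorem \ref{ThBr} and the $N$-p-heavy argument and nowhere uses $n\geq 10$, unlike the case $S=Z_3$ where that hypothesis is actually needed to exclude $L_1$ and $L_2$.
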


\begin{corollary}[\cite{BedrossianChenSchelp}]
Let $G$ be a graph. If $G$ is claw-f-heavy and $Z_1$-f-heavy, then $G$ is hamiltonian.
\end{corollary}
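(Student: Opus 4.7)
\bigskip

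\noindent\textbf{Proof proposal.} The plan is to deduce this corollary directly from the preceding theorem (every 2-connected claw-o-heavy and $N$-c-heavy graph is hamiltonian) by verifying the two hypotheses implications: (i) every claw-f-heavy graph is claw-o-heavy, and (ii) every $Z_1$-f-heavy graph is $N$-c-heavy. Under the (implicit) assumption that $G$ is 2-connected, combining (i) and (ii) with the theorem yields hamiltonicity immediately.

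Step (i) is an observation already mentioned in the text: in any induced $K_{1,3}$ with center $x$ and leaves $y_1,y_2,y_3$, the three pairs $\{y_i,y_j\}$ are at distance 2 in the claw, so f-heaviness forces at least two of $y_1,y_2,y_3$ to be heavy vertices of $G$. Any two such heavy leaves form a nonadjacent heavy pair of $G$ lying in the claw, which is exactly the o-heavy condition.

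The real content is step (ii). Fix an induced copy $N'$ of $N$ in $G$ with triangle $T=\{a,b,c\}$ and pendant vertices $a',b',c'$ attached to $a,b,c$ respectively, and let $C$ be a maximal clique of $N'$. The maximal cliques of $N'$ are $T$ and the three edges $\{a,a'\},\{b,b'\},\{c,c'\}$. If $C=T$ then $N'-C$ has only isolated vertices, so the c-heavy condition is vacuous. By the obvious symmetry, it remains to treat $C=\{a,a'\}$, for which $N'-C$ induces the path $b'\text{-}b\text{-}c\text{-}c'$, a single nontrivial component; we must produce a heavy vertex of $G$ inside $\{b,c,b',c'\}$. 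For this I would consider the induced subgraph of $G$ on $\{a,b,c,b'\}$, which is a triangle $\{a,b,c\}$ together with the pendant edge $bb'$, i.e.\ an induced $Z_1$. Inside this $Z_1$, the vertex $b'$ is at distance 2 from both $a$ and $c$, so the $Z_1$-f-heavy hypothesis forces either $b'$ to be heavy, or else both $a$ and $c$ to be heavy. In the first case $b'$ lies in the component; in the second case $c$ does. Either way the component contains a heavy vertex, establishing $N$-c-heaviness.

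The only mild subtlety is to be sure the induced subgraph $\{a,b,c,b'\}$ really is a $Z_1$, i.e.\ that no non-edges of $N'$ have become edges in $G$; but since $a',b',c'$ are pendants of the fixed induced copy $N'$ in $G$, the set $\{a,b,c,b'\}$ spans exactly the same edges in $G$ as in $N'$, which is $Z_1$. Apart from this bookkeeping, the argument is a direct two-line reduction, so I expect no technical obstacle once the right induced $Z_1$'s inside $N$ are identified. The corollary then follows from the theorem applied to $G$.
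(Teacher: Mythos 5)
Your proposal is correct and is essentially the paper's own (implicit) argument: the paper obtains this corollary from Theorem 6 precisely via the two implications you check, namely that claw-f-heavy implies claw-o-heavy and that $Z_1$-f-heavy implies $N$-c-heavy (the paper leaves these to the implication diagrams, while you verify the second one directly by locating an induced $Z_1$ on $\{a,b,c,b'\}$ inside the induced $N$, which is a sound and complete verification). Your remark that 2-connectivity must be assumed implicitly is also correct---the hypothesis is inherited from Theorem 6 and its omission in the corollary's statement is evidently a typo.
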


We remark that our methods used here are completely different from
the ones in \cite{Hu,LiRyjacekWangZhang,NingZhang}. We mainly use
the claw-o-heavy closure theory introduced by \v{C}ada
\cite{Cada}, and many other results from the area of forbidden
subgraphs. However, our technique here is new, and it is heavily
dependent on some new concepts and tools developed by us
recently. (See Lemma 7 in Sec.2 for example.) We point out that this
is the first time to deal with Hamiltonicity of graphs under pairs
of heavy subgraph conditions by using c-Closure theory
systemically, compared with several previous works in
\cite{BedrossianChenSchelp,LiWeiGao,Hu,ChenWeiZhang,LiRyjacekWangZhang,NingZhang,NingLiZhang}.

The rest of this paper is organized as follows. In Section 2, we
will present necessary and additional preliminaries (including the introduction to
claw-free closure theory, claw-o-heavy closure theory and a useful
theorem of Brousek). In Section 3, in the spirit of some previous works
of Brousek et al. \cite{BrousekRyjacekFavaron}, we will study the
stability of some subclasses of the class of claw-o-heavy graphs. In
Section 4, by using the closure theory and a previous result of
Brousek \cite{Brousek}, we give the proof of Theorem \ref{ThcHeavy}.
In Section 5, one useful remark is given to conclude this paper.

\section{Preliminaries}

The main tools in our paper are
two kinds of closure theories introduced by Ryj\'a\v{c}ek
\cite{Ryjacek} and \v{C}ada \cite{Cada}, respectively. These two
closure theories are used to study hamiltonian properties of
claw-free graphs and claw-o-heavy graphs, respectively. We will give
some terminology and notation with a prefix or superscript r or c,
respectively, to distinguish them.

\subsection*{r-Closure theory.}

Let $G$ be a claw-free graph and $x$ be a vertex of $G$. Following
\cite{Ryjacek}, we call $x$ an \emph{r-eligible vertex} of $G$ if
$N(x)$ induces a connected graph in $G$ but not a complete graph.
The \emph{completion of $G$ at $x$}, denoted by $G'_x$, is the graph
obtained from $G$ by adding all missing edges $uv$ with $u,v\in
N(x)$.

\begin{lemma}[\cite{Ryjacek}]\label{LeRy}
Let $G$ be a claw-free graph and $x$ be an r-eligible vertex of $G$.
Then\\
(1) the graph $G'_x$ is claw-free; and\\
(2) the circumferences of $G'_x$ and $G$ are equal.
\end{lemma}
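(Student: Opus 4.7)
The plan is to follow the classical two-step argument for the Ryjáček closure, exploiting only the two structural facts in the hypothesis: $N(x)$ induces a connected, non-complete subgraph of $G$, and $G$ is claw-free. A key observation used throughout is that, by construction, $N(x)$ induces a \emph{clique} in $G'_x$; consequently any induced claw (or independent triple) of $G'_x$ meets $N(x)$ in at most two vertices.

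For part (1), I would argue by contradiction. Suppose $G'_x$ contains an induced claw $K$ with center $c$ and leaves $a,b,d$. Note first that $c\neq x$, because $N_{G'_x}(x)=N_G(x)$ is a clique in $G'_x$. Also, since the three leaves of a claw are pairwise non-adjacent in $G'_x$, the set $\{a,b,d\}$ contains at most one vertex of $N(x)$. I will split into two cases according to whether $c\in N(x)$. If $c\notin N(x)$, then at least two of the leaves are outside $N(x)$, so at most one edge of the claw (namely an edge from $c$ to a leaf in $N(x)$) could be new; but every new edge joins two vertices of $N(x)$, forcing in fact \emph{no} edge of $K$ to be new, so $K$ is already an induced claw of $G$, a contradiction. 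If $c\in N(x)$, then $xc\in E(G)$ and the leaves outside $N(x)$ keep their original (non-)adjacencies to $x$; combining this with the claw-freeness of $G$ applied to $x$ together with $c$ and the leaves will yield an induced claw or a non-edge inside $N(x)$ at a place that contradicts either claw-freeness of $G$ or the fact that $\{a,b,d\}$ is independent in $G'_x$. The bookkeeping in this last case is the main obstacle: one must track precisely which of $a,b,d$ are in $N(x)$ and use that all missing edges among $N(x)$ were filled in.

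For part (2), the inequality $c(G)\le c(G'_x)$ is immediate because $E(G)\subseteq E(G'_x)$. For the reverse inequality, take a longest cycle $C'$ of $G'_x$ and let $F\subseteq E(C')$ be the set of \emph{new} edges of $C'$; every edge of $F$ lies inside $N(x)$. If $F=\emptyset$, then $C'$ is already a cycle of $G$ and we are done. Otherwise I will show that $C'$ can be modified to a cycle of $G$ of length at least $|C'|$, which together with the trivial direction gives equality. The idea is: because $N(x)$ induces a connected subgraph of $G$, for each new edge $uv\in F$ there is a $u$-$v$ path in $G[N(x)]$; one replaces $uv$ by such a path and shortcuts through $x$ to repair any resulting length discrepancy, using that $x$ is adjacent in $G$ to every vertex of $N(x)$. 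The main obstacle here is that naively substituting paths may create repeated vertices or may force the total length to increase rather than stay the same; to handle this cleanly I would, following Ryjáček's original argument, first reduce to the case where $C'$ contains $x$ (if not, splice $x$ into $C'$ at a new edge via two of its neighbours in $N(x)$ without changing the length), and then eliminate the remaining new edges of $F$ one by one, each time swapping an arc $u$--$v$ (with $uv\in F$) for the two-edge path $u$--$x$--$v$ or for a suitable detour in $G[N(x)]$, maintaining an induced-cycle structure of the same length in $G'_x$ but with strictly fewer new edges. Iterating produces a cycle of $G$ with $|V(C')|$ vertices, establishing $c(G)\ge c(G'_x)$.
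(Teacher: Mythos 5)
This lemma is not proved in the paper at all: it is quoted from Ryj\'a\v{c}ek \cite{Ryjacek}, so there is no internal argument to compare against and your proposal must stand on its own. Part (1) of your sketch is essentially correct. The one case you defer as ``bookkeeping,'' namely $c\in N(x)$, closes in one line: since the leaves are independent in $G'_x$ while $N(x)$ is a clique there, at most one leaf, say $a$, lies in $N(x)$; then $cb,cd$ are old edges, $xb,xd\notin E(G)$ because $b,d\notin N(x)$, $bd\notin E(G)$, and $cx\in E(G)$, so $\{c,x,b,d\}$ induces a claw of $G$ centred at $c$. You name exactly the right substitution (trade the leaf in $N(x)$ for $x$) but never execute it; that is a fillable omission rather than a wrong turn.

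Part (2) contains the genuine gap. The elimination step you propose --- ``swapping an arc $u$--$v$ (with $uv\in F$) for the two-edge path $u$--$x$--$v$'' --- does not preserve the length of the cycle (it lengthens it by one), and it cannot be iterated, since $x$ can serve as an internal vertex of the cycle only once; for $|F|\geq 2$ the scheme collapses at the second new edge. The length-increasing version of this move is in fact the correct argument for your preliminary reduction: if $x\notin V(C')$, inserting $x$ across a new edge yields a \emph{longer} cycle of $G'_x$, contradicting maximality --- not a splice ``without changing the length'' as you write. The exchange that actually does the work detaches $x$ from its two cycle-neighbours $y,z$, reattaches it across the new edge $uv$, and closes the gap at $y,z$ with the edge $yz$ of $G'_x$; this keeps the length fixed and destroys the new edge $uv$ but may create the new edge $yz$, and it is precisely there that the connectedness of $G[N(x)]$ in $G$ and the claw-freeness of $G$ must be brought in to conclude. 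Your proposal invokes local connectivity only to produce $u$--$v$ paths inside $N(x)$, a device you yourself note creates repeated vertices and length discrepancies and which is never repaired. As written, the inequality $c(G)\geq c(G'_x)$ is not established.
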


The \emph{r-closure} of a claw-free graph $G$, denoted by $\clr(G)$,
is defined by a sequence of graphs $G_1,G_2,\ldots,G_t$, and
vertices $x_1,x_2\ldots,x_{t-1}$ such that \\
(1) $G_1=G$, $G_t=\clr(G)$; \\
(2) $x_i$ is an r-eligible vertex of $G_i$, $G_{i+1}=(G_i)'_{x_i}$,
$1\leq i\leq t-1$; and\\
(3) $\clr(G)$ has no r-eligible vertices.

A claw-free graph $G$ is \emph{r-closed} if $G$ has no r-eligible
vertices, i.e., if $\clr(G)=G$.

\begin{theorem}[\cite{Ryjacek}]\label{ThRy}
Let $G$ be a claw-free graph. Then\\
(1) the r-closure $\clr(G)$ is well defined;\\
(2) there is a $C_3$-free graph $H$ such that $\clr(G)$ is the line
graph of $H$; and\\
(3) the circumferences of $\clr(G)$ and $G$ are equal.
\end{theorem}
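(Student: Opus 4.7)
The plan is to establish the three parts of Theorem \ref{ThRy} in order, with (1) carrying the main technical weight. For (1), the well-definedness of $\clr(G)$, I would argue by a confluence (diamond) principle: it suffices to show that if $x$ and $y$ are both r-eligible in some intermediate graph $G_i$, then the two possible completion orders (first $x$, then $y$ if still r-eligible, versus first $y$, then $x$ if still r-eligible) lead to graphs that agree after finitely many further completions. The key structural fact to verify is that performing a completion at $x$ can remove the r-eligibility of another vertex $z$ only by making $N(z)$ complete; it cannot disconnect $N(z)$, since adding edges inside a subset of $N(z)$ can never destroy connectivity. Combined with Lemma \ref{LeRy}(1) to preserve claw-freeness at each step, a Newman-style argument then forces any two maximal sequences of completions to terminate at the same graph $\clr(G)$.

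For part (2), I would analyze the local structure of the r-closed claw-free graph $G' := \clr(G)$. Because no vertex of $G'$ is r-eligible, for every $v \in V(G')$ the subgraph induced on $N_{G'}(v)$ is either complete or disconnected. Claw-freeness of $G'$ forbids an independent set of size $3$ inside any neighborhood, so in either case $N_{G'}(v)$ is a disjoint union of at most two cliques. By Krausz's classical characterization of line graphs, this local decomposition at every vertex yields $G' = L(H)$ for some graph $H$; moreover, $H$ must be triangle-free, because a triangle $uvw$ in $H$ would force, at the $L(H)$-vertex $uv$, the two neighborhood cliques (edges at $u$, edges at $v$, both minus $uv$) to share the vertex corresponding to the edge $uw$, contradicting the disjointness in our decomposition.

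Part (3) then follows by a direct induction on the length $t-1$ of the defining sequence: by construction $\clr(G) = G_t$ is obtained from $G_1 = G$ by finitely many completions at r-eligible vertices, and Lemma \ref{LeRy}(2) preserves the circumference at each step.

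I expect the confluence argument in (1) to be the main obstacle, since one must carry out a careful case analysis of how the completion at $x$ affects the r-eligibility status of an arbitrary other vertex $z$, splitting on whether $z \in N(x)$ or not and on whether $N(z)$ remains non-complete after the edge additions. Once (1) is in hand, (2) reduces to a local structural observation together with an appeal to Krausz's theorem (or, if one prefers a self-contained route, an explicit construction of $H$ from the maximal cliques of $G'$ by making each maximal clique a vertex of $H$ and each vertex of $G'$ lying in two maximal cliques an edge of $H$), and (3) is immediate.
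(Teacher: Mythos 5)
The paper does not actually prove this statement: Theorem~\ref{ThRy} is quoted from Ryj\'a\v{c}ek's paper \cite{Ryjacek} as a known tool, so there is no in-paper proof to compare against. Your plan is nonetheless a faithful reconstruction of the standard argument: well-definedness via termination (only finitely many edges can be added) plus local confluence of the completion operation; the observation that in an r-closed claw-free graph every neighbourhood is either complete or disconnected, hence (by claw-freeness) a disjoint union of at most two cliques with no edges between them, followed by a Krausz-type construction of a triangle-free $H$; and induction along the completion sequence using Lemma~\ref{LeRy}(2) for part (3). One step you should tighten before this counts as a proof: your justification that a completion at $x$ cannot disconnect $N(z)$ (``adding edges inside a subset of $N(z)$ can never destroy connectivity'') only covers the case $z\notin N(x)$, where $N(z)$ is unchanged as a set and merely gains edges. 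When $z\in N(x)$ the neighbourhood $N(z)$ itself grows, acquiring all of $N(x)\setminus\{z\}$, so you are not just adding edges inside a fixed set. The conclusion still holds --- every new neighbour of $z$ is adjacent in $G'_x$ to $x$, and $x\in N(z)$, so all new vertices attach to the component of $N(z)$ containing $x$ and the number of components cannot increase --- but this case must be argued explicitly, since it is exactly the point on which the confluence argument rests. A second, cosmetic point: in your triangle-freeness argument for $H$, the obstruction created by a triangle $uvw$ at the $L(H)$-vertex $uv$ is a cross edge between the two neighbourhood cliques (the edges $uw$ and $vw$ are adjacent in $L(H)$), not a shared vertex; the contradiction with the disjoint-union structure is the same.
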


It is not difficult to get the following (see
\cite{BrousekRyjacekFavaron}).

\begin{lemma}[\cite{BrousekRyjacekFavaron}]\label{LeBrRyFa}
Let $G$ be a claw-free graph. Then $\clr(G)$ is a
$K_{1,1,2}$-free supergraph of $G$ with the least number of edges.
\end{lemma}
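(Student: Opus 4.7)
The plan is to verify two separate assertions: that $\clr(G)$ is itself $K_{1,1,2}$-free, and that every $K_{1,1,2}$-free supergraph $F$ of $G$ must already contain $\clr(G)$ as a subgraph. Together these force $\clr(G)$ to have the minimum edge count among $K_{1,1,2}$-free supergraphs of $G$.

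For the first assertion I would invoke Theorem~\ref{ThRy}(2), which expresses $\clr(G)=L(H)$ for some triangle-free graph $H$, and then argue briefly that $L(H)$ is diamond-free. An induced $K_{1,1,2}$ in $L(H)$ would correspond to four edges $e_1,e_2,e_3,e_4$ of $H$ with $e_1,e_2$ non-incident, each of $e_3,e_4$ incident to both $e_1$ and $e_2$, and with $e_3,e_4$ sharing an endpoint; a short case analysis on which endpoints are shared shows that three of these four edges must span a triangle in $H$, contradicting triangle-freeness.

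For the second assertion, let $F$ be any $K_{1,1,2}$-free supergraph of $G$, and recall the closure sequence $G=G_1,G_2,\ldots,G_t=\clr(G)$ with r-eligible vertices $x_1,\ldots,x_{t-1}$. I would prove $F\supseteq G_i$ by induction on $i$, the base case being $F\supseteq G_1=G$. For the inductive step, let $x=x_i$ and suppose for contradiction that some pair $u,v\in N_{G_i}(x)$ satisfies $uv\notin E(F)$. Since $x$ is r-eligible, $G_i[N_{G_i}(x)]$ is connected, and hence so is $F[N_{G_i}(x)]\supseteq G_i[N_{G_i}(x)]$. Take a shortest $u,v$-path in $F[N_{G_i}(x)]$ and let its first three vertices be $u,w,z$ (where possibly $z=v$); by the shortest-path property $uw,wz\in E(F)$ but $uz\notin E(F)$. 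Then $\{x,u,w,z\}$ induces a $K_{1,1,2}$ in $F$, with non-adjacent pair $\{u,z\}$ and both of $x,w$ adjacent to each of $u,z$, contradicting the hypothesis on $F$. Thus $F$ contains every edge added when closing at $x_i$, so $F\supseteq G_{i+1}$, and the induction carries through to $F\supseteq\clr(G)$.

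The only genuine obstacle is the first assertion's brief case analysis showing that line graphs of triangle-free root graphs avoid $K_{1,1,2}$; in the second assertion, the crux is to notice that r-eligibility of $x$ — i.e.\ the connectedness together with non-completeness of $N_{G_i}(x)$ — is exactly what supplies the intermediate vertex $w$ that witnesses the forbidden diamond in $F$. Everything else reduces to routine bookkeeping along the closure sequence.
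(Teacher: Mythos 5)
Your proof is correct. Note that the paper itself offers no proof of Lemma~\ref{LeBrRyFa} --- it is quoted from \cite{BrousekRyjacekFavaron} with only the remark that it ``is not difficult to get'' --- so there is no in-paper argument to compare against; your two-step argument (deducing $K_{1,1,2}$-freeness of $\clr(G)$ from Theorem~\ref{ThRy}(2) together with the fact that line graphs of triangle-free graphs are diamond-free, and then showing by induction along the closure sequence that any $K_{1,1,2}$-free supergraph $F\supseteq G$ must contain all edges added at each r-eligible vertex, via the shortest-path vertices $u,w,z$ in $F[N_{G_i}(x)]$ forming an induced $K_{1,1,2}$ with $x$) is sound and is essentially the standard argument from the cited reference.
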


Following \cite{BrousekRyjacekFavaron}, we say a family
$\mathcal{G}$ of graphs is \emph{stable under the r-closure} (or shortly,
r-stable) if for every graph in $\mathcal{G}$, its r-closure is also
in $\mathcal{G}$. From Theorem \ref{ThRy}, we can see that the class
of all claw-free hamiltonian graphs and the class of all claw-free
non-hamiltonian graphs are r-stable.

\subsection*{c-Closure theory.}
Let $G$ be a claw-o-heavy graph and let $x\in V(G)$. Let $G'$ be the
graph obtained from $G$ by adding the missing edges $uv$ with
$u,v\in N(x)$ and $\{u,v\}$ is a heavy pair of $G$. We call $x$ a
\emph{c-eligible} vertex of $G$ if $N(x)$ is not a clique of $G$
and one of the following is true:\\
(1) $G'[N(x)]$ is connected; or\\
(2) $G'[N(x)]$ consists of two disjoint cliques $C_1$ and $C_2$, and
$x$ is contained in a heavy pair $\{x,z\}$ of $G$ such that
$zy_1,zy_2\in E(G)$ for some $y_1\in C_1$ and $y_2\in C_2$.\\
Note that if $G$ is claw-free, then an r-eligible vertex is also
c-eligible.

\begin{lemma}[\cite{Cada}]\label{LeCa}
Let $G$ be a claw-o-heavy graph and $x$ be a c-eligible vertex of $G$. Then\\
(1) for every vertex $y\in N(x)$, $d_{G'_x}(y)\geq d_{G'_x}(x)$;\\
(2) the graph $G'_x$ is claw-o-heavy; and\\
(3) the circumferences of $G'_x$ and $G$ are equal.
\end{lemma}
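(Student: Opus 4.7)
The plan is to treat the three assertions in sequence, with the substantive work lying in (2) and (3). For (1), note that $d_{G'_x}(x)=d_G(x)$ since no edges are added at $x$. Fixing $y\in N(x)$, I would partition $N_G(x)\setminus\{y\}$ into the set $A_1$ of $G$-neighbours of $y$, the set $A_2$ of vertices $z$ for which $yz\notin E(G)$ but $\{y,z\}$ is a heavy pair (hence $yz\in E(G'_x)$), and the set $A_3$ of remaining vertices, for which $\{y,z\}$ is not heavy. For $z\in A_3$ the inequality $d_G(y)+d_G(z)<n$, together with the claw-o-heaviness of $G$ applied to induced claws rooted at $x$ (such claws exist because $N(x)$ is not a clique), should force compensating heavy-pair neighbours for $y$ outside $N(x)$, balancing the deficit. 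In the two-clique case of c-eligibility, the auxiliary vertex $z$ with $zy_1,zy_2\in E(G)$ is exactly what supplies the additional slack needed to close the count.

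For (2), I would argue contrapositively. Suppose $G'_x$ contains an induced claw $K$ with centre $a$ and leaves $b,c,d$ that is not o-heavy in $G'_x$. The three non-edges $bc,bd,cd$ of $K$ are also non-edges of $G$, since no edges are removed in forming $G'_x$, so the only way $K$ can fail to be induced in $G$ is if one of $ab,ac,ad$ is a newly added edge, lying inside $N(x)$. If all three edges of $K$ are edges of $G$, then $K$ is an induced claw of $G$, so by claw-o-heaviness some pair among $\{b,c,d\}$ is heavy in $G$; passing to $G'_x$ cannot decrease degrees, so the same pair is heavy in $G'_x$, contradicting our assumption. Otherwise at least one of $a,b,c,d$ lies in $\{x\}\cup N(x)$, and I would use the defining structure of the closure operation to locate another induced claw in $G$ whose heavy pair transports to $K$ in $G'_x$; the analysis splits according to whether $x$ itself is a vertex of $K$ and which leaf sits in $N(x)$.

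Part (3), circumference preservation, will be the main obstacle. The inclusion $c(G)\le c(G'_x)$ is immediate from $G\subseteq G'_x$. For the reverse, given a longest cycle $C$ of $G'_x$ I aim to produce a cycle in $G$ of the same length. The only edges of $C$ that may fail to lie in $G$ are new edges $uv$ with $u,v\in N(x)$ and $\{u,v\}$ a heavy pair of $G$. For each such edge I would attempt to reroute $C$ either through $x$ itself or through a common $G$-neighbour in $N(x)$, using (i) the connectivity of $G'[N(x)]$ in c-eligible case~(1), or (ii) the distinguished vertex $z$ with $zy_1,zy_2\in E(G)$ in case~(2). Unlike Ryj\'a\v{c}ek's r-closure, where $N(x)$ becomes a clique and rerouting is immediate, here one is restricted to the edges actually present in $G'[N(x)]$; avoiding repeated vertices while preserving cycle length requires careful bookkeeping on the number and relative positions of new edges used by $C$. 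I expect that combining a standard insertion/chord argument with the heavy-pair degree inequality $d_G(u)+d_G(v)\ge n$ (which guarantees a plentiful supply of common neighbours) will resolve all configurations, but verifying that no exceptional small case arises is where the bulk of the work will sit.
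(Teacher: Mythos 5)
The paper offers no proof of this lemma: it is quoted verbatim from \v{C}ada \cite{Cada}, so there is no internal argument to compare yours against, and I can only assess the proposal on its own terms. There it has a genuine defect: you have conflated the two completion operations of Section~2. The graph $G'$ obtained by adding only the edges $uv$ with $u,v\in N(x)$ and $\{u,v\}$ a heavy pair is used solely to \emph{define} c-eligibility; the graph $G'_x$ appearing in the lemma is the full local completion, in which \emph{every} missing edge inside $N(x)$ is added, so that $N(x)\cup\{x\}$ becomes a clique of $G'_x$. This misreading surfaces twice. In part (1) it leads you to a ``deficit-balancing'' argument over your set $A_3$ that is both unsubstantiated and unnecessary: since all of $A_1\cup A_2\cup A_3\cup\{x\}$ lies in $N_{G'_x}(y)$, one has $d_{G'_x}(y)\geq |N(x)\setminus\{y\}|+1=d_G(x)=d_{G'_x}(x)$ immediately, with no appeal to claw-o-heaviness. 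In part (3) it is fatal to your plan: the new edges used by a longest cycle of $G'_x$ need \emph{not} join heavy pairs, so the inequality $d_G(u)+d_G(v)\geq n$ on which you base your ``plentiful supply of common neighbours'' is simply not available for them.

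Even setting the misreading aside, parts (2) and (3) are programmes rather than proofs. The outline of (2) is salvageable, but the two observations that make it work are missing: since $N(x)\cup\{x\}$ is a clique of $G'_x$ and the three leaves of the claw $K$ are pairwise nonadjacent there, at most one edge of $K$ is new, say $ab$ with $a,b\in N(x)$ and $c,d\notin N(x)\cup\{x\}$; then $\{a,x,c,d\}$ induces a claw of $G$, and whichever of its pairs is heavy in $G$ transports to a heavy pair of $K$ in $G'_x$, using part (1) to replace $x$ by $b$ when needed. Part (3) is the entire substance of the lemma, and you explicitly defer ``the bulk of the work'': the known proofs split the passage from $G$ to $G'_x$ into a Bondy--Chv\'atal-type step handling the heavy-pair edges and a Ryj\'a\v{c}ek-type cycle-rerouting step handling the remaining edges of the clique on $N(x)$ (this is where the connectivity of $G'[N(x)]$, or the distinguished vertex $z$ in the two-clique case, is actually used), and neither step is routine. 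As it stands the proposal does not establish the lemma.
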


The \emph{c-closure} of a claw-o-heavy graph $G$, denoted by
$\clc(G)$, is defined by a sequence of graphs $G_1,G_2,\ldots,G_t$,
and
vertices $x_1,x_2\ldots,x_{t-1}$ such that \\
(1) $G_1=G$, $G_t=\clc(G)$; \\
(2) $x_i$ is a c-eligible vertex of $G_i$, $G_{i+1}=(G_i)'_{x_i}$,
$1\leq i\leq t-1$; and\\
(3) $\clc(G)$ has no c-eligible vertices.

\begin{theorem}[\cite{Cada}]\label{ThCa}
Let $G$ be a claw-o-heavy graph. Then\\
(1) the c-closure $\clc(G)$ is well defined;\\
(2) there is a $C_3$-free graph $H$ such that $\clc(G)$ is the line
graph of $H$; and\\
(3) the circumferences of $\clc(G)$ and $G$ are equal.
\end{theorem}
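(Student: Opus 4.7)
My plan is to mirror the structure of Ryj\'a\v{c}ek's proof of Theorem \ref{ThRy}, replacing the local r-completions by c-completions and making essential use of Lemma \ref{LeCa}. The three assertions correspond to termination plus confluence, a structural characterization, and iterated circumference preservation, respectively.

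I would dispose of part (3) first, since it is the easiest. Each single completion $G \mapsto G'_{x}$ at a c-eligible vertex preserves the circumference by Lemma \ref{LeCa}(3) and preserves the claw-o-heavy property by Lemma \ref{LeCa}(2), so the operation can be iterated. Termination is immediate because the edge count strictly increases at each step and is bounded by $\binom{n}{2}$. A routine induction on the length of the completion sequence then gives circumference equality between $G$ and $\clc(G)$.

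For part (1), I would set up a confluence argument. It suffices to show that if $x$ and $y$ are two distinct c-eligible vertices of $G$, then starting the procedure by completing at $x$ first or at $y$ first eventually yields the same c-closed graph. The crucial ingredient is the degree-monotonicity encoded in Lemma \ref{LeCa}(1): once $\{u,v\}$ is a heavy pair of some $G_i$, it remains a heavy pair throughout the sequence, so c-eligibility of a vertex, once earned, can only propagate forwards and never be extinguished (modulo the fact that the vertex may already have a complete neighborhood, in which case it is no longer eligible, but then all potentially-addable edges at it are already present). With this monotonicity in hand, a standard Church--Rosser-style argument forces the two sequences to merge into the same final graph.

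For part (2), I would invoke the Beineke--Krausz line-graph characterization: a graph is the line graph of a triangle-free graph if and only if it is $\{K_{1,3}, K_{1,1,2}\}$-free. The task reduces to verifying that a c-closed claw-o-heavy graph contains no induced $K_{1,3}$ and no induced $K_{1,1,2}$. In each case, the presence of such a configuration, together with the claw-o-heavy hypothesis, should produce a heavy pair whose insertion by the closure rule would make the centre vertex c-eligible (the non-edge across a claw forcing a heavy pair via the hypothesis; the non-edge in a $K_{1,1,2}$ making the unique degree-two vertex c-eligible under clause~(1) of the definition, its neighborhood inducing a connected non-complete graph), contradicting c-closedness. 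The main obstacle I expect is part (1): the c-eligibility condition mixes local adjacency with global degree data through heavy pairs, and the second clause in the definition of c-eligibility (involving a heavy pair $\{x,z\}$ bridging two cliques of $N(x)$) is delicate; the case analysis distinguishing the two clauses during a swap of completion steps is where the bookkeeping will be heaviest.
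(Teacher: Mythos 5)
First, a point of calibration: the paper does not prove this theorem at all --- it is quoted verbatim from \v{C}ada \cite{Cada}, so there is no in-paper proof to compare against. Judged on its own terms, your outline for part (3) is fine (iterate Lemma \ref{LeCa}(2)--(3) and use that the edge count strictly increases), and your plan for part (1) is the standard Church--Rosser strategy from Ryj\'a\v{c}ek's and \v{C}ada's papers; there you have correctly located the hard point (persistence of c-eligibility under interleaved completions, especially clause (2) of the definition) but you have only deferred it, not done it. Note also a small imprecision: once $uv$ is added, $\{u,v\}$ is by definition no longer a heavy pair; what persists is the inequality $d(u)+d(v)\geq n$, i.e.\ membership of $uv$ in the auxiliary graph $G'$.

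The genuine gap is in part (2). You argue that an induced claw or $K_{1,1,2}$ in a c-closed claw-o-heavy graph would directly make its centre c-eligible. This fails: if $\{x,a,b,c\}$ induces a claw, the claw-o-heavy hypothesis gives a heavy pair among the leaves, so $G'[N(x)]$ is not complete, but $x$ is c-eligible only if $G'[N(x)]$ is connected or splits into exactly two cliques admitting a bridging vertex $z$ with $\{x,z\}$ heavy. The escape case --- $G'[N(x)]$ equal to two disjoint cliques with no such $z$ --- is perfectly consistent with the presence of an induced claw at $x$ and with claw-o-heaviness, so no contradiction is reached. (Likewise, in $K_{1,1,2}$ there are two degree-two vertices, and their neighbourhoods inside the configuration are complete; it is the degree-three vertices one must look at, and the same escape case blocks the argument there.) The correct route, and the one \v{C}ada takes, is to first establish that $\clc(G)$ contains \emph{no heavy pair at all} (Lemma \ref{LeHeavyPair}, i.e.\ Lemma 14 of \cite{Cada}); once that is known, $G'=\clc(G)$, clause (2) of c-eligibility is vacuous, and claw-o-heaviness forces every vertex's neighbourhood to induce either a clique or a disjoint union of exactly two cliques, which is precisely the local characterization of line graphs of triangle-free graphs. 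That no-heavy-pair lemma is a substantive separate argument that your proposal neither states nor replaces, so as written the proof of part (2) does not close.
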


A claw-o-heavy graph $G$ is \emph{c-closed} if $\clc(G)=G$. Note
that every line graph is claw-free (see \cite{Beineke}). This
implies that $\clc(G)$ is a claw-free graph. Also note that for a
claw-free graph, an r-eligible vertex is also c-eligible. This
implies that every c-closed graph is also r-closed.

Similarly as the case of r-closure, we say a family $\mathcal{G}$ of
graphs is stable under the c-closure (or shortly, c-stable) if for
every graph in $\mathcal{G}$, its c-closure is also in
$\mathcal{G}$.

The following lemma is an obvious but important fact, which can
be deduced from Lemma 14 in \cite{Cada} easily.
\begin{lemma}[\cite{Cada}]\label{LeHeavyPair}
Let $G$ be a claw-o-heavy graph. Then $\clc(G)$ has no heavy pair.
\end{lemma}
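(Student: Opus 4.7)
The plan is to argue by contradiction: assume $F:=\clc(G)$ contains a heavy pair $\{u,v\}$, and produce a c-eligible vertex in $F$, contradicting the defining property of the c-closure. Set $n=|V(F)|$, so $uv\notin E(F)$ and $d_F(u)+d_F(v)\ge n$. A first observation I would make is that $u$ and $v$ share a common neighbor: since $u,v\notin N(u)\cup N(v)$, we have $|N(u)\cup N(v)|\le n-2$, so inclusion--exclusion gives $|N(u)\cap N(v)|\ge d(u)+d(v)-(n-2)\ge 2$. Pick any common neighbor $x$.

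The next step would be to describe $N_F(x)$ using Theorem \ref{ThCa}(2). Since $F=L(H)$ for some triangle-free graph $H$, the vertex $x$ corresponds to an edge $ab\in E(H)$, and $N_F(x)$ partitions into the two cliques $C_1=\{e\in E(H):a\in e,\,e\ne ab\}$ and $C_2=\{e\in E(H):b\in e,\,e\ne ab\}$; no edge of $F$ can join $C_1$ to $C_2$, since such an edge would produce a triangle in $H$. Because $uv\notin E(F)$, the vertices $u$ and $v$ must lie in different parts, say $u\in C_1$ and $v\in C_2$.

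To finish, I would pass to the auxiliary graph $F'_x$ obtained from $F$ by adding all missing edges of heavy pairs of $F$ with both endpoints in $N(x)$. The heavy pair $\{u,v\}$ then contributes the edge $uv$, which connects $C_1$ to $C_2$, so $F'_x[N(x)]$ is connected; since $N(x)$ is not a clique (witnessed by $u,v$), condition~(1) in the definition of c-eligibility is satisfied and $x$ is c-eligible in $F$, contradicting the fact that $F=\clc(G)$ has no c-eligible vertices. I do not foresee a real obstacle: the argument depends only on the line-graph description of $\clc(G)$ from Theorem \ref{ThCa}, elementary counting, and the definition of c-eligibility, which fits the excerpt's remark that the statement follows easily from Lemma~14 in \cite{Cada}. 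The only mildly delicate point is making sure one appeals to the \emph{correct} clause of the c-eligibility definition, namely the connectedness condition~(1), rather than clause~(2) involving an external heavy-paired vertex.
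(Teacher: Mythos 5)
Your argument is correct, and it is worth noting that the paper itself supplies no proof of this lemma at all: it is stated as an ``obvious but important fact'' deducible from Lemma~14 of \cite{Cada}, with the deduction left to the reader. What you have written is essentially that missing deduction, carried out from ingredients already present in the paper. The counting step is right ($u,v\notin N(u)\cup N(v)$ gives $|N(u)\cup N(v)|\le n-2$, hence at least two common neighbors, though one would suffice); the line-graph structure from Theorem~\ref{ThCa}(2) correctly forces $N_F(x)$ to split into two cliques $C_1,C_2$ with no edges between them (an edge $ac$--$bd$ with $c=d$ would create a triangle $abc$ in $H$), so the nonadjacent pair $u,v$ must straddle the two cliques and the added heavy-pair edge $uv$ makes the auxiliary neighborhood graph connected, triggering clause~(1) of c-eligibility and contradicting the closedness of $\clc(G)$. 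One small notational caution: the paper reserves $G'_x$ for the completion of $G$ at $x$ (all missing edges in $N(x)$ added), whereas the auxiliary graph in the c-eligibility test, to which your $F'_x$ corresponds, is the graph denoted $G'$ in the definition, where only the heavy-pair edges are added; you should rename to avoid the clash, but the substance of the argument is unaffected.
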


Here we list some new concepts introduced by us recently \cite{NingLiZhang}.
Let $G$ be a claw-o-heavy graph and $C$ be a maximal clique of
$\clc(G)$. We call $G[C]$ a \emph{region} of $G$. For a vertex $v$
of $G$, we call $v$ an \emph{interior vertex} if it is contained in
only one region, and a \emph{frontier vertex} if it is contained in
two distinct regions.

A graph $G$ is nonseparable if it is connected and has no
cut-vertex (i.e., either $G$ is 2-connected, or $G=K_1$ or $K_2$).
The following useful lemma originally appeared as Lemma 2 in
\cite{NingLiZhang}, and it plays the crucial role of our proofs.

\begin{lemma}[\cite{NingLiZhang}]\label{LeRegion}
Let $G$ be a claw-o-heavy graph and $R$ be a region of $G$. Then\\
(1) $R$ is nonseparable;\\
(2) if $v$ is a frontier vertex of $R$, then $v$ has an interior
neighbor in $R$ or $R$ is complete and has no interior vertices; and\\
(3) for any two vertices $u,v\in R$, there is an induced path of $G$
from $u$ to $v$ such that every internal vertex of the path is an
interior vertex of $R$.
\end{lemma}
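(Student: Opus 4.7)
The plan is to exploit Theorem \ref{ThCa}, which provides a triangle-free graph $H$ with $\clc(G) = L(H)$. Under this representation, maximal cliques of $\clc(G)$ correspond to stars of $H$ at vertices of $H$-degree at least two (plus singleton cliques from isolated edges of $H$). A region $R = G[C]$ therefore corresponds to a single vertex $w_R \in V(H)$, with vertices of $R$ indexing the $H$-edges incident to $w_R$; a vertex of $R$ is interior iff its other $H$-endpoint is a leaf of $H$, and frontier iff that endpoint has $H$-degree at least two. If one were to replace $G$ by $\clc(G)$ throughout the statement, the three conclusions would be immediate from the completeness of $\clc(G)[C]$; the real task is to promote them down to $G$.

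The main tool is a \emph{common-neighbour principle}: every heavy pair $\{u,v\}$ of $G$ satisfies $|N_G(u) \cap N_G(v)| \ge d_G(u) + d_G(v) - (n-2) \ge 2$, and since $\clc(G) = L(H)$ with $H$ triangle-free, any common $\clc(G)$-neighbour of $u$ and $v$ (and in particular any common $G$-neighbour) must lie in the unique maximal clique of $\clc(G)$ containing $\{u,v\}$: the only alternative would supply a triangle in $H$. Combined with Lemma \ref{LeHeavyPair}, which forces every heavy pair of $G$ to already sit as an edge of $\clc(G)$, this principle says that whenever a heavy pair of $G$ ends up inside a region $R$, it comes with at least two genuine $G$-common-neighbours living in $R$.

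I would then argue by reverse induction along the c-closure sequence $G = G_1, \ldots, G_t = \clc(G)$, the inductive statement being that the three conclusions hold for every region $G_i[C]$ (with $C$ a maximal clique of the fixed graph $\clc(G)$). The base $i = t$ is immediate. In the inductive step from $G_{i+1}$ to $G_i$, undoing the completion at the c-eligible vertex $x = x_i$ only deletes edges lying inside $N_{G_i}(x)$; using the c-eligibility clauses together with the common-neighbour principle applied to the heavy pairs witnessing that eligibility, one shows that the deletion cannot create a cut vertex in any affected region, yielding (1), and cannot eliminate all interior neighbours of a frontier vertex unless $R$ degenerates to the complete interior-free case, yielding (2). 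Part (3) then follows from (1) and (2) by a standard shortcut argument: nonseparability supplies two internally-disjoint $u$--$v$ paths in $R$, and part (2) combined with the clique structure of $\clc(G)[C]$ lets one replace any frontier internal vertex by an interior detour, iterating until every internal vertex is interior. The main obstacle is the inductive step for part (1) under the second c-eligibility clause, in which the two neighbourhood-cliques $C_1, C_2$ are bridged only through the heavy pair $\{x,z\}$: here the common-neighbour principle must be applied simultaneously to $\{x,z\}$ and to heavy pairs inside $N_{G_i}(x)$, and one must carefully track which heavy pairs of $G_i$ actually correspond to deleted edges of $R$ — this is the most delicate piece of the argument.
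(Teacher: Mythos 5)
This lemma is not proved in the paper at all: it is imported verbatim from \cite{NingLiZhang} (``originally appeared as Lemma 2 in \cite{NingLiZhang}''), so there is no in-paper argument to compare yours against. Judged on its own terms, your write-up is a strategy outline rather than a proof, and the part you defer is exactly where the whole difficulty lives. The preparatory observations are fine: the line-graph picture of $\clc(G)$, the fact that a heavy pair of $G$ has at least two common neighbours and becomes an edge of $\clc(G)$ by Lemma \ref{LeHeavyPair}, and the fact that any common neighbour must land in the unique maximal clique containing the pair (else $H$ has a triangle) are all correct. But the inductive step --- ``one shows that the deletion cannot create a cut vertex in any affected region'' --- is asserted, not argued, and it is not a routine verification.

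Concretely: the completion at $x_i$ adds \emph{all} missing edges inside $N_{G_i}(x_i)$, not only those corresponding to heavy pairs, so ``the heavy pairs witnessing eligibility'' do not account for all the edges you must delete when going from $G_{i+1}$ back to $G_i$; for the non-heavy deleted pairs your common-neighbour principle says nothing, and the only guaranteed common neighbour is $x_i$ itself. That is not enough to preserve nonseparability: deleting from $K_4$ on $\{x,a,b,c\}$ the three edges inside $N(x)$ leaves a star with cut vertex $x$, so you must genuinely invoke the c-eligibility clauses to rule such configurations out, and when you try, the common neighbours supplied by heavy pairs of $G_i$ need not lie in $N_{G_i}(x_i)$ nor even in the region $C$ under consideration (if $x_i$ is a frontier vertex, $N_{G_i}(x_i)$ meets two regions). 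The second eligibility clause, which you flag yourself, is worse still. Part (2) receives no argument beyond a restatement of its conclusion, and for part (3) you still need to show that $u$ and $v$ are joined inside $G[I_R\cup\{u,v\}]$ (connectivity of the interior is not a formal consequence of (1) and (2) as stated); only the final ``take a shortest such path, interior vertices have all their neighbours in $R$, hence no chords'' step is genuinely routine. As it stands the proposal cannot be accepted as a proof of Lemma \ref{LeRegion}; the missing content is essentially all of it.
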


Following \cite{Brousek}, we  define $\mathcal{P}$ to be the class
of graphs obtained from two vertex-disjoint triangles
$a_1a_2a_3a_1$ and $b_1b_2b_3b_1$ by joining every pair of
vertices $\{a_i,b_i\}$ by a path $P_{k_i}$, where $k_i\geq 3$ or by a triangle. We use
$P_{x_1,x_2,x_3}$ to denote the graph in $\mathcal{P}$, where
$x_i=k_i$ if $a_i$ and $b_i$ are joined by a path $P_{k_i}$, and
$x_i=T$ if $a_i$ and $b_i$ are joined by a triangle. Note that
$L_1=P_{T,T,T}$ and $L_2=P_{3,T,T}$.

We give the following useful result to finish this section.

\begin{theorem}[\cite{Brousek}]\label{ThBr}
Every non-hamiltonian 2-connected claw-free graph contains an
induced subgraph $G'\in \mathcal{P}$.
\end{theorem}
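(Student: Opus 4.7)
The plan is to use Ryj\'a\v{c}ek's r-closure theory to reduce the problem to a line graph of a triangle-free graph, then to translate non-hamiltonicity into the absence of a dominating closed trail in that preimage, and finally to exhibit the desired $\mathcal{P}$-subgraph from the preimage structure.

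First, applying Theorem~\ref{ThRy}, I would replace $G$ by $\clr(G) = L(H)$ for some triangle-free graph $H$. Since the r-closure preserves both the circumference and 2-connectedness, $L(H)$ remains a 2-connected, non-hamiltonian, claw-free graph. By the classical Harary--Nash-Williams theorem, $L(H)$ is hamiltonian if and only if $H$ has a dominating closed trail (DCT); so $H$ has no DCT. Together with the 2-connectedness of $L(H)$, which forces $H$ to be essentially 2-edge-connected, this gives enough structural leverage.

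Next, I would locate inside $H$ two vertices $u, v$ of degree at least $3$ joined by three internally disjoint $(u,v)$-trails $Q_1, Q_2, Q_3$ such that each $Q_i$ is either a chordless $(u,v)$-path of length at least $3$, or passes through a common neighbor $w_i$ of $u$ and $v$ with $d_H(w_i) \geq 3$. Taking the three edges of $H$ incident to $u$ (resp.\ $v$) among these trails as the vertices of a triangle $a_1 a_2 a_3$ (resp.\ $b_1 b_2 b_3$) in $L(H)$, the edges of each $Q_i$ yield either an induced $P_{k_i}$ with $k_i \geq 3$ from $a_i$ to $b_i$, or, when $w_i$ exists, a triangle $a_i b_i c_i$ where $c_i$ is a third edge incident to $w_i$. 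Since $H$ is triangle-free and the $Q_i$ are internally disjoint, no extra edges appear, so this subgraph is an induced copy of a graph in $\mathcal{P}$ inside $L(H)$.

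Finally, I would lift this induced $\mathcal{P}$-subgraph from $\clr(G)$ back to $G$. Any edge of $E(\clr(G)) \setminus E(G)$ lies in the completed neighborhood of some r-eligible vertex; a short claw-freeness argument, combined with the rigid pendant-triangle structure of graphs in $\mathcal{P}$, shows that no such edge can appear among the vertices of the $\mathcal{P}$-subgraph without contradicting either the induced $P_{k_i}$ or the induced triangle pattern. The main obstacle is the selection step in the middle: finding $u$, $v$ and the three trails inside $H$ so that the correspondence to $\mathcal{P}$ is \emph{exact}, that is, neither creating spurious line-graph adjacencies nor missing the pendant edge needed for a triangle-type connection. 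This is where the fine interplay between absence of a DCT and essential 2-edge-connectedness of $H$ must be used most carefully.
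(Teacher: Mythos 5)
The paper does not prove this statement at all: it is quoted directly from Brousek \cite{Brousek} as an external tool, so there is no internal proof to compare yours against. Judged on its own terms, your proposal has two genuine gaps, each fatal as written. The first is the return trip from the closure. The theorem asserts that $G$ itself contains an induced member of $\mathcal{P}$, but a vertex set inducing some $P_{x_1,x_2,x_3}$ in $\clr(G)=L(H)$ induces in $G$ a graph on the same vertices with possibly \emph{fewer} edges, which need not belong to $\mathcal{P}$; the correct witness in $G$ may be a completely different vertex set. Your ``short claw-freeness argument'' amounts to asserting that the class of claw-free graphs with no induced member of $\mathcal{P}$ is r-stable. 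Nothing of the sort is routine: Theorem \ref{ThBrRyFa} shows precisely how delicate such stability questions are (for instance $B$-freeness and $W$-freeness are \emph{not} r-stable, and Section 3 notes that even a single completion step can create new induced copies of a graph such as $Z_i$). In the literature the stability of non-hamiltonicity-witnessing structure is \emph{deduced from} Brousek's theorem together with the circumference invariance of the closure, not used to prove it, so this step is circular in spirit and unproved in substance.

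The second gap is the extraction step in $H$, which you yourself flag as ``the main obstacle'': locating $u$, $v$ and three internally disjoint trails realizing the $\mathcal{P}$-pattern exactly, with no spurious incidences among the chosen edges of $H$ (which is what is needed for the corresponding subgraph of $L(H)$ to be \emph{induced}). Non-existence of a dominating closed trail is a global property of $H$ and does not obviously localize to a two-vertex, three-trail configuration; this localization is essentially the entire content of the theorem, and no argument is offered for it. For the record, Brousek's published proof does not pass through the closure or a line-graph preimage at all: it is a direct structural analysis of a longest cycle in a (minimal) 2-connected non-hamiltonian claw-free graph. To salvage your route you would need to prove both the DCT-to-configuration extraction and the stability statement, each comparable in difficulty to the theorem itself.
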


\section{Stable classes under closure operation}

Brousek et al. \cite{BrousekRyjacekFavaron} studied the graphs $S$
such that the class of claw-free $S$-free graphs is r-stable. Before
we present their result, we first remark that if $S$ contains an
induced claw or an induced $K_{1,1,2}$, then the class of claw-free
and $S$-free graphs is trivially r-stable by Lemma \ref{LeBrRyFa}.
So in the following theorem we assume that $S$ is claw-free and
$K_{1,1,2}$-free.

\begin{theorem}[\cite{BrousekRyjacekFavaron}]\label{ThBrRyFa}
Let $S$ be a connected claw-free and $K_{1,1,2}$-free graph of order
at least 3. Then the class of claw-free and $S$-free graphs is
r-stable, if and only if
$$S\in\{C_3,H\}\cup\{P_i: i\geq 3\}\cup\{Z_i: i\geq
1\}\cup\{N_{i,j,k}: i,j,k\geq 1\}.$$
\end{theorem}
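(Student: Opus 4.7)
The plan is to prove both directions of the equivalence by the standard single-step closure argument. For sufficiency, it is enough to show that for each $S$ in the listed family, one closure step preserves the conjunction of being claw-free and $S$-free; the general claim then follows by iteration along any closure sequence. Lemma \ref{LeRy}(1) already gives claw-freeness, so the task reduces to the following single-step statement: if $G$ is claw-free and $S$-free, $x$ is r-eligible in $G$, and $G'_x$ contains an induced copy $T$ of $S$, then $G$ already contains an induced copy of $S$. Since $G$ is $S$-free, $T$ must use at least one new edge $uv$ with $u,v\in N_G(x)$, and the core idea is to ``reroute'' around such an edge by replacing $uv$ with the length-two path $u\,x\,v$ through $x$; one then argues that either the rerouted structure is an induced $S$ already present in $G$, or the substitution produces a forbidden induced claw, contradicting the claw-freeness of $G$.

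For the individual graphs in the list I would carry out a short case analysis. For $S=C_3$, no iteration is needed at all: a claw-free $C_3$-free graph has every neighborhood independent of size at most two, so no vertex can be r-eligible and $\clr(G)=G$. For $S=P_i$, the analysis splits according to whether $uv$ is an endpoint edge or an internal edge of $T$. For $S=Z_i$ one further distinguishes whether $uv$ lies in the triangle of $Z_i$ or on its pendant path, and reduces each case to the path argument via the detour through $x$. For $S=N_{i,j,k}$ the argument is similar but requires tracking which of the three pendant paths contains $x$ or its neighbors relative to the central triangle, and the graph $S=H$ succumbs to a direct check thanks to its small order. For the necessity direction, for each connected claw-free $K_{1,1,2}$-free graph $S$ of order at least three outside the listed family, I would produce an explicit small claw-free $S$-free graph whose r-closure contains $S$. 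Using Theorem \ref{ThRy}(2), a systematic source of such counterexamples is to take $G=L(F)$ for a carefully chosen triangle-free graph $F$, delete a suitable single edge to obtain an unclosed graph $G_0$, and then verify that $G_0$ avoids $S$ while $\clr(G_0)=L(F)$ contains it.

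The main obstacle I expect is the family $\{N_{i,j,k}\}$ in the sufficiency direction, together with the structural organization required for necessity. For $N_{i,j,k}$ the three pendant paths attached to a common triangle can interact with the local structure around $x$ in many configurations, and showing that the detour through $x$ always yields a genuine induced $N_{i,j,k}$ in $G$ or else a forbidden claw requires careful bookkeeping of non-adjacencies along all three pendant paths simultaneously. For the necessity direction, the hard part is not any single counterexample but ensuring that every connected claw-free $K_{1,1,2}$-free graph $S$ outside the listed family is covered; this depends on a structural understanding of such graphs, ruling out configurations such as $B_{i,j}$ with $(i,j)\neq(1,1)$ and other non-listed possibilities via explicit line-graph based constructions.
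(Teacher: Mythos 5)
This theorem is not proved in the paper: it is quoted from Brousek, Ryj\'a\v{c}ek and Favaron \cite{BrousekRyjacekFavaron}, so there is no in-paper argument to compare yours against. Judged on its own terms, your proposal contains a genuine gap in the sufficiency direction.

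Your entire sufficiency plan rests on the reduction ``it is enough to show that one closure step preserves being claw-free and $S$-free; the general claim then follows by iteration.'' That single-step statement is \emph{false} for part of the listed family. The paper itself records this, just before Theorem \ref{ThSZ1}: there exist claw-free and $Z_i$-free graphs $G$ with an r-eligible vertex $x$ such that $G'_x$ is \emph{not} $Z_i$-free (see \cite{BrousekRyjacekFavaron}); the same phenomenon occurs for $N_{i,j,k}$. So r-stability of the class of claw-free $S$-free graphs does not follow by iterating a one-step preservation lemma, because the intermediate graphs $G_2,\dots,G_{t-1}$ in the closure sequence may fail to be $S$-free even though $G$ and $\clr(G)$ both are. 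The step-by-step rerouting argument is sound for $S=P_i$ (it is exactly the paper's proof of the analogous c-closure result, Theorem \ref{ThSPi}), and $S=C_3$ and $S=H$ are easy as you say, but for $Z_i$ and $N_{i,j,k}$ one must argue directly about the fully closed graph, exploiting that $\clr(G)$ is the line graph of a triangle-free graph (Theorem \ref{ThRy}(2)) and analyzing its clique (region) structure --- this is precisely how the paper handles the harder c-closure analogues in Theorems \ref{ThSZ1} and \ref{ThSZi}. Without replacing your iteration scheme by such a global argument, the cases $S=Z_i$ and $S=N_{i,j,k}$ are not established. The necessity direction is plausible in outline but, as you acknowledge, requires a structural classification of all connected claw-free $K_{1,1,2}$-free graphs outside the list before the counterexample constructions can be said to cover every case; as written it is a program rather than a proof.
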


\begin{center}
\begin{picture}(120,70)
\thicklines

\put(0,0){\put(20,10){\circle*{4}} \put(20,60){\circle*{4}}
\put(60,35){\circle*{4}} \put(100,10){\circle*{4}}
\put(100,60){\circle*{4}} \put(20,10){\line(0,1){50}}
\put(20,10){\line(8,5){80}} \put(20,60){\line(8,-5){80}}
\put(100,10){\line(0,1){50}}}

\end{picture}

{\small Figure 4. Graph $H$ (hourglass).}
\end{center}

In the spirit of previous works of Brousek et al.
\cite{BrousekRyjacekFavaron}, we will consider the c-stability of
the class of claw-o-heavy and $S$-c-heavy graphs. Before showing our
results about this topic, we first remark the following trivial
facts:

If $S$ is the join of a complete graph and an empty graph
(specially, if $S$ is a complete graph or a star), then for every
maximal clique $C$ of $S$, $S-C$ has only trivial components. Thus
by our definition, every graph will be $S$-c-heavy. Moreover, by our
definition of c-stability, the class of claw-o-heavy and $S$-c-heavy
graphs is c-stable. In the following, we will characterize all the
other graphs $S$ such that the class of claw-o-heavy and $S$-c-heavy
graphs is c-stable.

\begin{center}
\begin{picture}(250,125)
\thicklines

\put(0,80){\multiput(20,30)(30,0){5}{\put(0,0){\circle*{4}}}
\put(20,30){\line(1,0){60}} \put(110,30){\line(1,0){30}}
\qbezier[4](80,30)(95,30)(110,30) \put(17,33){$a_1$}
\put(47,33){$a_2$} \put(77,33){$a_3$} \put(107,33){$a_{i-1}$}
\put(137,33){$a_i$} \put(75,10){$P_i$}}

\put(5,0){\put(20,30){\circle*{4}} \put(20,80){\circle*{4}}
\put(20,30){\line(0,1){50}} \put(20,30){\line(1,1){25}}
\put(20,80){\line(1,-1){25}}
\multiput(45,55)(60,0){2}{\multiput(0,0)(30,0){2}{\circle*{4}}
\put(0,0){\line(1,0){30}}} \qbezier[4](75,55)(90,55)(105,55)
\put(12,78){$b$} \put(12,28){$c$} \put(42,58){$a$}
\put(72,58){$a_1$} \put(102,58){$a_{i-1}$} \put(132,58){$a_i$}
\put(70,10){$Z_i$}}

\put(160,0){\multiput(20,30)(50,0){2}{\multiput(0,0)(0,30){2}{\put(0,0){\circle*{4}}}
\put(0,0){\line(0,1){30}}}
\multiput(45,85)(0,30){2}{\put(0,0){\circle*{4}}}
\put(45,85){\line(0,1){30}} \put(20,60){\line(1,0){50}}
\put(20,60){\line(1,1){25}} \put(70,60){\line(-1,1){25}}
\put(48,83){$a$} \put(48,113){$a_1$} \put(10,58){$b$}
\put(10,28){$b_1$} \put(73,58){$c$} \put(73,28){$c_1$}
\put(40,10){$N$}}

\end{picture}

{\small Figure 5. Graphs $P_i$, $Z_i$ and $N$.}
\end{center}

For a vertex $x$ of a graph $G$, we set $B_G(x)=\{uv: u,v\in N(x)$
and $uv\notin E(G)\}$. For convenience, we say a vertex or a pair of
nonadjacent vertices is \emph{light} if it is not heavy.

\begin{theorem}\label{ThSPi}
Let $G$ be a claw-o-heavy and $P_i$-c-heavy graph, $i\geq 4$, and
$x$ be a c-eligible vertex of $G$. Then $G'_x$ is $P_i$-c-heavy.
\end{theorem}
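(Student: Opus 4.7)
The plan is to argue by contradiction. Suppose $G'_x$ is not $P_i$-c-heavy. Then there exist an induced copy $P = p_1 p_2 \cdots p_i$ of $P_i$ in $G'_x$, a maximal clique $C$ of $P$, and a nontrivial component $Q$ of $P - C$ containing no heavy vertex of $G'_x$. Because $P_i$ is triangle-free, $C$ is a single edge, say $C = p_j p_{j+1}$; up to relabeling I may take $Q = p_{j+2} p_{j+3} \cdots p_i$ with $i - j - 1 \geq 2$. Since $E(G) \subseteq E(G'_x)$, every vertex heavy in $G$ remains heavy in $G'_x$, so $Q$ contains no heavy vertex of $G$ either.

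If $P$ is also induced in $G$, then the triple $(P,C,Q)$ already witnesses that $G$ is not $P_i$-c-heavy, a contradiction. So $P$ uses at least one \emph{new} edge of $E(G'_x) \setminus E(G)$. By the construction of $G'_x$, each such new edge $p_k p_{k+1}$ satisfies $p_k, p_{k+1} \in N_G(x)$ and $\{p_k, p_{k+1}\}$ is a heavy pair of $G$, so at least one of $p_k, p_{k+1}$ is heavy in $G$. Two elementary observations follow. First, $x \notin V(P)$: otherwise, writing $x = p_\ell$, one would have $\{p_k, p_{k+1}\} \subseteq N_{G'_x}(p_\ell) \cap V(P) \subseteq \{p_{\ell-1}, p_{\ell+1}\}$, impossible since $p_{\ell-1}$ and $p_{\ell+1}$ are not consecutive on $P$. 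Second, every new edge index $k$ satisfies $k \leq j+1$, for otherwise both $p_k, p_{k+1} \in Q$ and the heavy endpoint would give a heavy vertex of $Q$.

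The main technical step is to produce from $P$ an induced copy of $P_i$ in $G$ which still has $Q$, or a sub-path of $Q$ of at least two vertices, as a nontrivial component after deleting a maximal clique. The natural candidate is the detour $P^\ast = p_1 \cdots p_k x p_{k+1} \cdots p_i$ of order $i+1$ obtained by bypassing a single new edge via $x$, from which one then deletes a carefully chosen end-vertex to return to order $i$. For $P^\ast$ to be induced in $G$ it is crucial that the chosen new edge is the only new edge of $P$ and that $N_G(x) \cap V(P) = \{p_k, p_{k+1}\}$. To control this, set $I = \{\ell : xp_\ell \in E(G)\}$. For $\ell, m \in I$ with $|\ell - m| \geq 2$, the pair $\{p_\ell, p_m\}$ cannot be a heavy pair of $G$: otherwise the c-closure step at $x$ would have added the edge $p_\ell p_m$, contradicting $P$ being induced in $G'_x$. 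Applying the claw-o-heaviness of $G$ to any induced claw $\{x, p_{\ell_1}, p_{\ell_2}, p_{\ell_3}\}$ of $G$ with three pairwise non-consecutive indices then yields a contradiction, so $I$ consists of at most two maximal runs of consecutive integers.

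It remains to run the case analysis on the shape of $I$ and on the positions of the new edges relative to $C$. In the easy case $|I| = 2$, the detour $P^\ast$ is immediate and I would delete an end-vertex on the left of $P^\ast$ so that $Q$ persists as the right component after removing $C' = p_j p_{j+1}$ (when the new edge is strictly left of $C$), $C' = p_{j+1} x$ (when the new edge is $p_{j+1} p_{j+2}$), or $C' = x p_{j+1}$ (when the new edge is $C$ itself). In the harder cases with several new edges or additional neighbours of $x$ on $P$, the two-run restriction on $I$ together with the localisation of new edges at indices $\leq j+1$ allows one to substitute $x$ for a single $p_\ell$ on the left portion of $P$ and still avoid chords, again preserving $Q$ as a component containing no heavy vertex. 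The main obstacle is precisely this case analysis: without the claw-based restriction on $I$ and the confinement of new edges to positions near $C$, the naive detour $P^\ast$ would in general carry chords $xp_a$ for $a \in I \setminus \{k, k+1\}$ and spoil the construction.
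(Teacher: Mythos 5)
Your overall plan---reduce to the case where $P$ is not induced in $G$, reroute $P$ through $x$ across a new edge, delete an end-vertex to recover an induced $P_i$ of $G$, and invoke the $P_i$-c-heaviness of $G$---is essentially the paper's argument. But the execution rests on a misreading of the local completion, and this leaves two real holes. The completion $G'_x$ at a c-eligible vertex adds \emph{all} missing edges $uv$ with $u,v\in N_G(x)$, so that $N(x)$ becomes a clique of $G'_x$; the auxiliary graph in which only the heavy-pair edges are inserted is used solely to decide whether $x$ is c-eligible. (This is forced by Lemma~\ref{LeCa} and Theorem~\ref{ThCa}(2), and the paper's proof states explicitly that $N(x)$ is a clique in $G'_x$.)

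Two consequences. First, your claim that every new edge $p_kp_{k+1}$ of $P$ joins a heavy pair of $G$ is unjustified, so your exclusion of new edges lying inside $Q$ collapses: a new edge with $k\geq j+2$ is perfectly possible, and in that configuration the induced $P_i$ of $G$ you construct contains $x$ inside the component that replaces $Q$, so the heavy vertex guaranteed by $P_i$-c-heaviness of $G$ may be $x$ itself rather than a vertex of $Q$. Closing this requires Lemma~\ref{LeCa}(1), namely $d_{G'_x}(y)\geq d_{G'_x}(x)$ for $y\in N(x)$, to transfer heaviness from $x$ to $p_k\in Q$---a tool your argument never invokes, although it is indispensable (the paper already needs it in its case $j=1$). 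Second, since $P$ is induced in $G'_x$ and $N(x)$ is a clique there, one gets immediately that $|N_G(x)\cap V(P)|\leq 2$ and, when equality holds, the two vertices are consecutive on $P$; hence there is exactly one new edge. Your claw-based ``two runs'' analysis of $I$ is therefore answering the wrong question, and the ``harder cases with several new edges'', which you explicitly leave as an unresolved case analysis, do not exist---but as written your proof neither establishes that they are vacuous nor handles them. So the proposal is incomplete on two counts: the only genuinely delicate configuration (a new edge inside $Q$) is dismissed for a false reason, and the configurations you flag as hard are never resolved.
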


\begin{proof}
Let $P$ be an induced $P_i$ of $G'_x$. We denote the vertices of $P$
as in Figure 5, and will prove that one vertex of $\{a_1,a_2\}$ is
heavy in $G'_x$ and one vertex of $\{a_{i-1},a_i\}$ is heavy in
$G'_x$. Note that $d_{G'_x}(v)\geq d(v)$ for every vertex $v\in
V(G)$. If $P$ is also an induced subgraph of $G$, then $P$ is
c-heavy in $G$, and then, is c-heavy in $G'_x$. So we assume that
$P$ is not an induced subgraph of $G$, which implies that $E(P)\cap
B_G(x)\neq\emptyset$. Suppose that $a_ja_{j+1}$ is an edge in
$E(P)\cap B_G(x)$, where $1\leq j\leq i-1$.

Since $N(x)$ is a clique in $G'_x$, $N(x)\cap V(P)=\{a_j,a_{j+1}\}$
and there is only one edge in $E(P)\cap B_G(x)$. If $j\geq 2$, then
$P'=a_1a_2\cdots a_jxa_{j+1}\cdots a_{i-1}$ is an induced $P_i$ of
$G$. Since $G$ is $P_i$-c-heavy, one vertex of $\{a_1,a_2\}$ is
heavy in $G$, and then, is heavy in $G'_x$. If $j=1$, then
$P'=a_1xa_2\cdots a_{i-1}$ is an induced $P_i$ of $G$. Thus one
vertex of $\{a_1,x\}$ is heavy in $G$. Note that $d_{G'_x}(a_1)\geq
d_{G'_x}(x)=d(x)$ (see Lemma \ref{LeCa}). Thus $a_1$ is heavy in
$G'_x$. Hence in any case, we have shown that one vertex of
$\{a_1,a_2\}$ is heavy in $G'_x$. By the symmetry, we can prove that
one vertex of $\{a_{i-1},a_i\}$ is heavy in $G'_x$.
\end{proof}

Note that every c-closed graph has no heavy pairs, and note that
every c-heavy $P_i$ with $i\geq 5$ must have a heavy pair. By
Theorem \ref{ThSPi}, we have

\begin{corollary}\label{CoFPi}
Let $G$ be a claw-o-heavy and $P_i$-c-heavy graph with $i\geq 5$.
Then $\clc(G)$ is $P_i$-free.
\end{corollary}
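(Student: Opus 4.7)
The plan is to combine Theorem \ref{ThSPi} with Lemma \ref{LeHeavyPair} and a short combinatorial observation about c-heavy paths. First, I would apply Theorem \ref{ThSPi} inductively along the sequence $G=G_1,G_2,\ldots,G_t=\clc(G)$ that defines the c-closure: at each step $G_j$ is claw-o-heavy (by Lemma \ref{LeCa}(2)) and, by induction, $P_i$-c-heavy, so Theorem \ref{ThSPi} gives that $G_{j+1}=(G_j)'_{x_j}$ is again $P_i$-c-heavy. Thus $\clc(G)$ is a claw-o-heavy and $P_i$-c-heavy graph. By Lemma \ref{LeHeavyPair}, $\clc(G)$ contains no heavy pair, so it is enough to prove the following claim: every c-heavy induced $P_i$ in a claw-o-heavy graph $H$ with $i\geq 5$ contains a heavy pair of $H$.

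To prove the claim, let $P=a_1a_2\cdots a_i$ be an induced $P_i$ in $H$. Since $P$ is triangle-free, its maximal cliques are exactly its edges. I would distinguish two cases. If $i\geq 6$, pick any ``middle'' edge $\{a_j,a_{j+1}\}$ with $3\leq j\leq i-3$; removing it from $P$ leaves two nontrivial path components $a_1\cdots a_{j-1}$ and $a_{j+2}\cdots a_i$. By c-heaviness each contains a heavy vertex of $H$, and since these two heavy vertices lie in different components of $P-\{a_j,a_{j+1}\}$, they are nonadjacent in the induced path $P$, hence nonadjacent in $H$, and so form a heavy pair. If $i=5$, removing the maximal clique $\{a_2,a_3\}$ forces a heavy vertex in the nontrivial component $\{a_4,a_5\}$, and removing $\{a_3,a_4\}$ forces a heavy vertex in $\{a_1,a_2\}$; any such pair consists of nonadjacent vertices of $P$, and is therefore a heavy pair of $H$.

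This contradicts Lemma \ref{LeHeavyPair} applied to $\clc(G)$, so $\clc(G)$ contains no induced $P_i$. The only delicate point in the whole argument is the $i=5$ case, where no single maximal-clique removal yields two nontrivial components, and one must combine information from two different maximal-clique removals to locate the required nonadjacent pair of heavy vertices. Everything else is a routine iteration of Theorem \ref{ThSPi} together with the observation that edges are the only maximal cliques of a path.
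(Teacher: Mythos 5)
Your proposal is correct and follows essentially the same route as the paper, which derives the corollary from Theorem \ref{ThSPi} together with the (unproved) observation that every c-heavy induced $P_i$ with $i\geq 5$ must contain a heavy pair, contradicting Lemma \ref{LeHeavyPair}. Your write-up merely supplies the details the paper leaves implicit, including the correct handling of the $i=5$ case via two different maximal-clique removals.
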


\begin{corollary}\label{CoSPi}
For $i\geq 3$, the class of claw-o-heavy and $P_i$-c-heavy graphs is
c-stable.
\end{corollary}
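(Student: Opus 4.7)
The plan is to derive this as a routine iteration of Theorem \ref{ThSPi} along a c-closure sequence, with a separate (trivial) remark for $i=3$. First I would recall the definition: to show c-stability of the class of claw-o-heavy and $P_i$-c-heavy graphs, one must verify that for every such $G$, the graph $\clc(G)$ is again claw-o-heavy and $P_i$-c-heavy. The claw-o-heavy part is immediate: by Lemma \ref{LeCa}(2) and the inductive construction of $\clc(G)$, claw-o-heaviness is preserved at each completion step, so by Theorem \ref{ThCa} the closure is claw-o-heavy. What remains is $P_i$-c-heaviness.

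For $i=3$ the statement is automatic. The graph $P_3$ has maximal cliques equal to its two edges, and in each case $P_3-C$ consists of a single isolated vertex, hence has no nontrivial component. So every graph is vacuously $P_3$-c-heavy, and therefore $\clc(G)$ is $P_3$-c-heavy trivially.

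For $i\geq 4$, I would invoke Theorem \ref{ThSPi} inductively. Fix a c-closure sequence $G=G_1,G_2,\ldots,G_t=\clc(G)$ with $G_{j+1}=(G_j)'_{x_j}$ for a c-eligible vertex $x_j\in V(G_j)$. By Lemma \ref{LeCa}(2), every $G_j$ is claw-o-heavy. Assuming $G_j$ is $P_i$-c-heavy, Theorem \ref{ThSPi} (which applies verbatim to the intermediate claw-o-heavy graph $G_j$ with c-eligible vertex $x_j$) yields that $G_{j+1}$ is again $P_i$-c-heavy. Induction on $j$ gives that $G_t=\clc(G)$ is $P_i$-c-heavy, completing the proof.

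There is essentially no obstacle: all the substantive work has already been done in Theorem \ref{ThSPi}, which is the one-step preservation result, and in Theorem \ref{ThCa}, which guarantees that the c-closure is well-defined and claw-o-heavy. The only point to check carefully is that Theorem \ref{ThSPi} is stated for a general claw-o-heavy and $P_i$-c-heavy graph with an arbitrary c-eligible vertex, so it genuinely applies at each step of the sequence (not only to the initial graph $G_1$); this is clear from its statement. Thus the corollary is an immediate inductive consequence.
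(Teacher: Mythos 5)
Your proof is correct and matches the paper's intent: the paper states Corollary \ref{CoSPi} without an explicit proof, treating it as the immediate inductive consequence of Theorem \ref{ThSPi} along the c-closure sequence (together with Lemma \ref{LeCa}(2) for claw-o-heaviness), exactly as you argue, and your separate remark that every graph is vacuously $P_3$-c-heavy handles the case $i=3$ just as the paper does elsewhere.
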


There are no counterpart results of Theorem \ref{ThSPi} for the
graph $Z_i$. In fact, there exist claw-free and $Z_i$-free graphs
$G$ with an r-eligible vertex $x$ such that $G'_x$ is not
$Z_i$-free, see \cite{BrousekRyjacekFavaron}. However, we can prove
that the class of claw-o-heavy and $Z_i$-c-heavy graphs is also
c-stable for $i\neq 2$.

\begin{theorem}\label{ThSZ1}
Let $G$ be a claw-o-heavy and $Z_1$-c-heavy graph. Then $\clc(G)$ is
also $Z_1$-c-heavy.
\end{theorem}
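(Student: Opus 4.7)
My plan is to proceed by induction on the number of c-closure steps from $G$ to $\clc(G)$. Since Lemma~\ref{LeCa}(2) guarantees that claw-o-heaviness is preserved by completion at a c-eligible vertex, it suffices to establish the single-step statement: if $G$ is claw-o-heavy and $Z_1$-c-heavy and $x$ is a c-eligible vertex of $G$, then $G'_x$ is again $Z_1$-c-heavy. Fix an induced $Z_1$ in $G'_x$ with triangle $T=\{a,b,c\}$ and pendant $a_1$ attached to $a$, and aim to show that one of $b,c$ is heavy in $G'_x$. If this $Z_1$ is already induced in $G$, $Z_1$-c-heaviness of $G$ yields one of $b,c$ heavy in $G$, and since degrees are non-decreasing under closure, the same vertex remains heavy in $G'_x$. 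Otherwise, some edge of $Z$ lies in $B_G(x)$, so both its endpoints lie in $N(x)$.

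A quick structural observation then pins down the situation: the set $V_Z\cap N(x)$ must itself induce a clique in $Z$, because any non-edge of $Z$ with both endpoints in $N(x)$ would be filled in by the completion and would destroy the inducedness of $Z$ in $G'_x$. Since the maximal cliques of $Z$ are the triangle $\{a,b,c\}$ and the pendant edge $\{a,a_1\}$, the only possibilities are $V_Z\cap N(x)\in\{\{a,a_1\},\{a,b\},\{a,c\},\{b,c\},\{a,b,c\}\}$. The case $\{a,a_1\}$ is essentially free: $aa_1\notin E(G)$, the triangle $\{a,b,c\}$ survives in $G$, and swapping $x$ for $a_1$ gives an induced $Z_1$ on $\{a,b,c,x\}$ in $G$ with the same $b,c$; $Z_1$-c-heaviness of $G$ completes this case. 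In the cases $V_Z\cap N(x)\in\{\{a,b\},\{a,c\},\{b,c\}\}$ the triangle fails in $G$, but an induced claw centered at $a$ appears there, and I invoke claw-o-heaviness. The easy sub-branches of the resulting heavy-pair dichotomy are those where the heavy pair is $\{b,c\}$ directly or involves $x$: in the latter, $x$ heavy plus $b$ or $c\in N(x)$ upgrades the latter to heavy in $G'_x$ via Lemma~\ref{LeCa}(1). The case $V_Z\cap N(x)=\{a,b,c\}$ reduces similarly---either to a direct induced $Z_1$ in $G$ on $\{x,a,c,a_1\}$ or $\{x,a,b,a_1\}$ when one or two triangle edges are added, or to an induced claw $\{x;a,b,c\}$ in $G$ when all three are added, with the conclusion extracted as before.

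The technical heart of the proof is the residual sub-case in which the claw-o-heavy conclusion forces $a_1$, rather than $b$, $c$, or $x$, to be the heavy vertex. Here I plan to exploit the c-eligibility of $x$ itself: since $N(x)$ is not a clique, either $G'[N(x)]$ is connected, or it splits into two cliques joined through a companion vertex $z$ of a heavy pair $\{x,z\}$. In the connected case I walk a shortest $G'$-path inside $N(x)$ between the non-adjacent pair responsible for the added edge; each step is either a genuine heavy pair (which, chased through, either makes $b$ or $c$ heavy, or produces an intermediate heavy vertex that together with $a_1$ and their common neighbor $a$ exhibits a fresh induced $Z_1$ in $G$ whose $Z_1$-c-heaviness forces $b$ or $c$ heavy) or an honest $G$-edge (producing a triangle through $x$ on which $Z_1$-c-heaviness of $G$ yields $x$ heavy, again done). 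The two-clique case is handled analogously through $z$. This simultaneous use of claw-o-heaviness, $Z_1$-c-heaviness of $G$, and the detailed c-eligibility structure of $N(x)$ is the main obstacle I expect to encounter.
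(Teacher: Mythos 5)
Your overall strategy is genuinely different from the paper's: you aim for a one-step stability statement (completion at a single c-eligible vertex preserves $Z_1$-c-heaviness) and then iterate, in the style of Theorem~\ref{ThSPi}, whereas the paper argues directly about the fully closed graph $\clc(G)$, using the region structure of Lemma~\ref{LeRegion} and, crucially, Lemma~\ref{LeHeavyPair}, i.e.\ the fact that $\clc(G)$ has no heavy pair at all. This difference is not cosmetic; it is exactly where your argument breaks down.

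The gap is the ``residual sub-case'' that you yourself flag as the technical heart. Take $V_Z\cap N(x)=\{b,c\}$, so $bc\in B_G(x)$ and $\{a;b,c,a_1\}$ induces a claw in $G$. Claw-o-heaviness only yields that one of the pairs $\{b,c\}$, $\{b,a_1\}$, $\{c,a_1\}$ has degree sum at least $n$; in the bad branch this pair is, say, $\{c,a_1\}$ with $a_1$ heavy and both $b$ and $c$ light, and nothing has been gained. Your plan to rescue this from the c-eligibility of $x$ does not go through: walking a $G'$-path in $N(x)$ from $b$ to $c$, a genuine $G$-edge $uv$ on the path gives a triangle $xuv$ in $G$, but to invoke $Z_1$-c-heaviness you must exhibit a vertex adjacent to exactly one vertex of that triangle, which you never do; and even when such a pendant vertex exists, the conclusion is only that one of the two triangle vertices away from the pendant is heavy, which need not be $x$, $b$ or $c$. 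A heavy-pair step $uv$ on the path merely produces heavy vertices with no relation to $b$ or $c$. In short, there is no mechanism transferring heaviness to $\{b,c\}$. The paper avoids this precisely by working in $\clc(G)$: there Lemma~\ref{LeHeavyPair} excludes every heavy pair, so in the claws arising from the region structure the two pairs involving the leaf outside the region are ruled out automatically, and the remaining pair --- the one you need --- is forced to be heavy. That tool is unavailable at an intermediate step $G_i$, which may have heavy pairs everywhere; the paper's remark that the one-step analogue of Theorem~\ref{ThSPi} already fails for $Z_i$-freeness is a further warning sign. To repair your proof you would essentially have to abandon the single-step induction and prove the statement for the full closure, which is what the paper does.
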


\begin{proof}
Let $Z$ be an induced $Z_1$ in $\clc(G)$. We denote the vertices of
$Z$ as in Figure 5. We will prove that either $b$ or $c$ is heavy.

\begin{claim}\label{ClNxH}
Let $R$ be a region of $G$ and $x\in V(R)$ be a frontier vertex. If
$y,y'$ are two neighbors of $x$ in $R$, then one vertex in
$\{y,y'\}$ is heavy in $G$.
\end{claim}

\begin{proof}
Let $z$ be a neighbor of $x$ in $G-R$. Clearly $yz,y'z\notin E(G)$.
If $yy'\in E(G)$, then the subgraph of $G$ induced by $\{x,y,y',z\}$
is a $Z_1$. Since $G$ is $Z_1$-c-heavy, either $y$ or $y'$ is heavy
in $G$. Now we assume that $yy'\notin E(G)$. Then the subgraph of
$G$ induced by $\{x,y,y',z\}$ is a claw. Note that $\{y,z\}$ and
$\{y',z\}$ are not heavy pairs in $\clc(G)$, and then, are not heavy
pairs in $G$. This implies that $\{y,y'\}$ is a heavy pair of $G$.
Thus either $y$ or $y'$ is heavy in $G$.
\end{proof}

Suppose that both $b$ and $c$ are light. Let $R$ be the region of
$G$ containing $\{a,b,c\}$. Note that $R$ is a clique in $\clc(G)$.
If $|V(R)|\geq |V(G)|/2+1$, then $b$ is heavy in $\clc(G)$, a
contradiction. So we assume that $|V(R)|\leq(|V(G)|+1)/2$. This
implies that every interior vertex of $R$ is light in $\clc(G)$, and
also, in $G$.

If $R$ has no interior vertex, then by Lemma \ref{LeRegion}, $R$ is
a clique in $G$. By Claim \ref{ClNxH}, either $b$ or $c$ is heavy in
$G$, a contradiction. So we assume that $R$ has an interior vertex.
By Lemma \ref{LeRegion}, $R$ has an interior vertex adjacent to $a$.
Since $a$ has at least two neighbors in $R$, we may choose two
neighbors $x,y$ of $a$ in $R$ such that $x$ is an interior vertex of
$R$. Note that $x$ is light in $G$. By Claim \ref{ClNxH}, $y$ is
heavy in $G$. Recall that $b,c$ and every interior vertex of $R$ are
light. Hence $y\neq b,c$ and $y$ is a frontier vertex of $R$.

If both $by$ and $cy$ are in $E(G)$, then by Claim \ref{ClNxH},
either $b$ or $c$ is heavy in $G$, a contradiction. So we conclude
that $by\notin E(G)$ or $cy\notin E(G)$.

If $d_{G-R}(y)=1$, then $d(y)=d_R(y)+1\leq|V(R)|-2+1\leq(n-1)/2$.
Hence $y$ is light in $G$, a contradiction. So we conclude that
$d_{G-R}(y)\geq 2$. Also note that $d_R(y)\geq 2$ by Lemma
\ref{LeRegion}. Let $x',x''$ be two vertices in $N_R(y)$ and
$y',y''$ be two vertices in $N_{G-R}(y)$. By Claim \ref{ClNxH}, one
vertex of $\{x',x''\}$ is heavy in $G$, and one vertex of
$\{y',y''\}$ is heavy in $G$. We assume without loss of generality
that $x',y'$ are heavy in $G$. Then $\{x',y'\}$ is a heavy pair in
$G$, and also is a heavy pair of $\clc(G)$, a contradiction.
\end{proof}

\begin{theorem}\label{ThSZi}
Let $G$ be a claw-o-heavy and $Z_i$-c-heavy graph with $i\geq 3$.
Then $\clc(G)$ is $Z_i$-free.
\end{theorem}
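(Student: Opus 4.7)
The plan is to argue by contradiction. Suppose $\clc(G)$ contains an induced $Z_i$ on vertices $\{b, c, a, a_1, \ldots, a_i\}$ labeled as in Figure 5. Since $\clc(G)$ has no heavy pair by Lemma \ref{LeHeavyPair}, it is enough to exhibit two non-adjacent vertices of $\clc(G)$ whose degrees sum to at least $n$.

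The crucial observation is that $\{a, a_1\}$ is a maximal clique of this induced $Z_i$, and its removal leaves exactly two nontrivial components: the edge $\{b, c\}$ and the path $a_2 a_3 \cdots a_i$ on $i-1 \geq 2$ vertices (this is where the hypothesis $i \geq 3$ enters essentially). Moreover, no vertex of $\{b, c\}$ is adjacent in $Z_i$ to any vertex of $\{a_2, \ldots, a_i\}$. Consequently, if the induced $Z_i$ contains a vertex of $\{b, c\}$ and a vertex of $\{a_2, \ldots, a_i\}$ that are both heavy in $\clc(G)$, these two vertices form a heavy pair, giving the desired contradiction. In short, the task reduces to showing that $\clc(G)$ is $Z_i$-c-heavy.

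To establish these two heaviness assertions I would mimic the strategy of Theorem \ref{ThSZ1}. For the first, let $R$ be the region of $G$ containing the triangle $\{a, b, c\}$. If $|V(R)| \geq n/2 + 1$, then $b$ has degree $\geq |V(R)| - 1 \geq n/2$ in $\clc(G)$ and is heavy. Otherwise all interior vertices of $R$ are light in $\clc(G)$ (hence also in $G$), and I would prove the following strengthening of Claim \ref{ClNxH}: given a frontier vertex $x$ of $R$ with an outside neighbor $z$ and two neighbors $y, y'$ of $x$ in $R$, one of $y, y'$ is heavy in $G$. When $yy' \notin E(G)$, this follows as in Claim \ref{ClNxH} from claw-o-heaviness together with the absence of heavy pairs in $\clc(G)$. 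When $yy' \in E(G)$, the set $\{x, y, y', z\}$ induces a $Z_1$ in $G$, which I extend by an induced path $z z_1 \cdots z_{i-1}$ in $G$ (obtained by iterated application of Lemma \ref{LeRegion}(3) across consecutive regions starting at $z$) to an induced $Z_i$ of $G$ on triangle $\{x, y, y'\}$ with pendant path $x z z_1 \cdots z_{i-1}$. Applying $Z_i$-c-heaviness with the maximal clique $\{x, z\}$ removed (whose deletion splits $Z_i$ into the nontrivial components $\{y, y'\}$ and $\{z_1, \ldots, z_{i-1}\}$) forces one of $y, y'$ to be heavy in $G$. Plugging this strengthened claim into the interior/frontier case analysis of Theorem \ref{ThSZ1} yields the first heaviness assertion; the second is handled symmetrically, by considering a region adjacent to the far end of the path $a_1 a_2 \cdots a_i$.

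The principal obstacle is the construction of the induced path extension $z z_1 \cdots z_{i-1}$: it must be long enough, have no internal chords, and acquire no chords back to the triangle $\{x, y, y'\}$. Splicing together induced paths provided by Lemma \ref{LeRegion}(3) across several regions while controlling these chord-freeness conditions, in particular avoiding vertices of $R$ other than $x$, is the heart of the technical work. Once this path is secured, the application of $Z_i$-c-heaviness closes the argument cleanly.
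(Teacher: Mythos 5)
Your reduction is sound: since $\clc(G)$ has no heavy pair (Lemma \ref{LeHeavyPair}) and in $Z_i$ the deletion of the maximal clique $\{a,a_1\}$ leaves the two nontrivial components $\{b,c\}$ and $\{a_2,\dots,a_i\}$, whose vertices are mutually nonadjacent, it does suffice to show that the induced $Z_i$ of $\clc(G)$ is c-heavy. The gap lies in how you propose to produce the heavy vertices. Your strengthened version of Claim \ref{ClNxH} requires, for an \emph{arbitrary} frontier vertex $x$ of $R$ with adjacent neighbours $y,y'$ in $R$ and outside neighbour $z$, an induced path $zz_1\cdots z_{i-1}$ of $G$ extending $xz$ so that $\{x,y,y',z,z_1,\dots,z_{i-1}\}$ induces a $Z_i$. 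Nothing in the hypotheses guarantees that such a path exists: the part of $G$ beyond $z$ may be too small or too clique-like (for instance $z$ may have degree $1$ in $G$, being an interior vertex of the region $\{x,z\}$), in which case no induced $Z_i$ sits on the triangle $\{x,y,y'\}$ at all, the $Z_i$-c-heavy hypothesis is vacuous there, and you cannot conclude that one of $y,y'$ is heavy. You flag this construction as ``the heart of the technical work'', but it is not merely technical --- the strengthened claim is false in general, so the interior/frontier analysis you want to import from Theorem \ref{ThSZ1} collapses. The ``symmetric'' second assertion is in even worse shape, since the far end of the pendant path is not attached to a triangle and there is no analogue of the region $R$ to work in.

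The paper's proof is organized precisely to avoid this trap: the only long induced paths it ever uses are obtained by pulling back the path $aa_1\cdots a_i$, which already exists in $\clc(G)$, into $G$ via Lemma \ref{LeRegion}(3); every induced $Z_i$ it exhibits in $G$ has its pendant path routed along the resulting vertices $a'_1,\dots,a'_i$, so existence is automatic. The remainder is then not an adaptation of Theorem \ref{ThSZ1} but a counting argument: Claims 1 and 2 bound neighbourhoods and force heavy pairs of $G$ to have common neighbours in $I_R$, and Claims 3 and 4 drive $|I_R|$ down to $1$, after which a heavy pair of $\clc(G)$ appears in every case. Any repair of your proposal would have to restrict all uses of your strengthened claim to configurations whose pendant path can be routed along $a'_1,\dots,a'_i$, which essentially forces you back to the paper's structure.
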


\begin{proof}

The proof is almost the same as the proof of Lemma 3 in
\cite{NingLiZhang}. The only difference occurs when we
find an induced $Z_i$ in $\clc(G)$, instead of a $Z_3 $ as
done in the proof of Lemma 3 in \cite{NingLiZhang}, and when we
use the c-heavy condition, instead of the f-heavy condition.
But we still shall carry it in full,
due to some specific details and the integrity of this paper.
Now we give the proof along the outline in
\cite{NingLiZhang} step by step.

Suppose the contrary. Let $Z$ be an induced $Z_i$ in $\clc(G)$. We
denote the vertices of $Z$ as in Figure 5. Let $R$ be the region of
$G$ containing $\{a,b,c\}$. Proofs of the first two claims are almost the
same as Claims 1, 2 in the proof of Lemma 3 in \cite{NingLiZhang}.

\setcounter{claim}{0}
\begin{claim}\cite[Claim 1 in the proof of Lemma 3]{NingLiZhang}\\
$|N_R(a_2)\cup N_R(a_3)|\leq 1$.
\end{claim}

\begin{proof}
Note that every vertex in $G-R$ has at most one neighbor in $R$. If
$N_R(a_2)=\emptyset$, then the assertion is obviously true. Now we
assume that $N_R(a_2)\neq\emptyset$. Let $x$ be the vertex in
$N_R(a_2)$. Clearly $x\neq a$ and $a_1x\notin E(\clc(G))$. If
$a_3x\notin E(\clc(G))$, then $\{a_2,a_1,a_3,x\}$ induces a claw in
$\clc(G)$, a contradiction. This implies that $a_3x\in E(\clc(G))$,
and $x$ is the unique vertex in $N_{\clc(G)}(a_3)\cap V(R)$. Thus
$N_R(a_2)\cup N_R(a_3)=\{x\}$.
\end{proof}

We denote by $I_R$ the set of interior vertices of $R$, and by $F_R$
the set of frontier vertices of $R$.

\begin{claim}\cite[Claim 2 in the proof of Lemma 3]{NingLiZhang}\\
Let $x,y$ be two vertices in $R$. \\
(1) If $\{x,y\}$ is a heavy pair of $G$, then $x,y$ have two common
neighbors in $I_R$. \\
(2) If $x,y\in I_R\cup\{a\}$, $xy\in E(G)$ and $d(x)+d(y)\geq n$,
then $x,y$ have a common neighbor in $I_R$.
\end{claim}

\begin{proof}
(1) Note that every vertex in $F_R$ has at least one neighbor in
$G-R$, and every vertex in $G-R$ has at most one neighbor in $F_R$.
We have $|N_{G-R}(F_R\backslash\{x,y\})|\geq|F_R\backslash\{x,y\}|$.
Also note that
$n=|I_R\backslash\{x,y\}|+|F_R\backslash\{x,y\}|+|V(G-R)|+2$. Thus
\begin{align*}
n   & \leq d(x)+d(y)\\
    & =d_{I_R}(x)+d_{I_R}(y)+d_{F_R}(x)+d_{F_R}(y)+d_{G-R}(x)+d_{G-R}(y)\\
    & \leq d_{I_R}(x)+d_{I_R}(y)+2|F_R\backslash\{x,y\}|+
        d_{G-R}(x)+d_{G-R}(y)\\
    & \leq d_{I_R}(x)+d_{I_R}(y)+|F_R\backslash\{x,y\}|+
        |N_{G-R}(F_R\backslash\{x,y\})|+|N_{G-R}(x)|+|N_{G-R}(y)|\\
    & =d_{I_R}(x)+d_{I_R}(y)+|F_R\backslash\{x,y\}|+|N_{G-R}(F_R)|\\
    & \leq d_{I_R}(x)+d_{I_R}(y)+|F_R\backslash\{x,y\}|+|V(G-R)|,
\end{align*}
and $$d_{I_R}(x)+d_{I_R}(y)\geq
n-|F_R\backslash\{x,y\}|-|V(G-R)|=|I_R\backslash\{x,y\}|+2.$$ This
implies that $x,y$ have two common neighbors in $I_R$.

(2) Note that if $a_2,a_3\in N_{G-R}(R)$, then they have a common
neighbor in $F_R\backslash\{a\}$. By Claim 1, we can see that
$$|V(G-R)|\geq|F_R|+1 \mbox{ and } |V(G-R)\backslash
N_{G-R}(a)|\geq|F_R\backslash\{a\}|+1.$$

If $x,y\in I_R$, then
\begin{align*}
n   & \leq d(x)+d(y)\\
    & =d_{I_R}(x)+d_{I_R}(y)+d_{F_R}(x)+d_{F_R}(y)\\
    & \leq d_{I_R}(x)+d_{I_R}(y)+2|F_R|\\
    & \leq d_{I_R}(x)+d_{I_R}(y)+|F_R|+|V(G-R)|-1,
\end{align*}
and $$d_{I_R}(x)+d_{I_R}(y)\geq n-|F_R|-|V(G-R)|+1=|I_R|+1.$$ This
implies that $x,y$ have a common neighbor in $I_R$.

If one of $x,y$, say $y$, is equal to $a$, then
\begin{align*}
n   & \leq d(x)+d(a)\\
    & =d_{I_R}(x)+d_{I_R}(a)+d_{F_R}(x)+d_{F_R}(a)+d_{G-R}(a)\\
    & \leq d_{I_R}(x)+d_{I_R}(a)+|F_R|+|F_R\backslash\{a\}|+d_{G-R}(a)\\
    & \leq d_{I_R}(x)+d_{I_R}(a)+|F_R|+|V(G-R)\backslash N_{G-R}(a)|-1+
        |N_{G-R}(a)|\\
    & \leq d_{I_R}(x)+d_{I_R}(a)+|F_R|+|V(G-R)|-1,
\end{align*}
and $$d_{I_R}(x)+d_{I_R}(a)\geq n-|F_R|-|V(G-R)|+1=|I_R|+1.$$ This
implies that $x,a$ have a common neighbor in $I_R$.
\end{proof}
From here, the main difference between the proof presented here
and the proof of Lemma 3 in \cite{NingLiZhang} would occur when we
find an induced $Z_i$ and use the $Z_i$-c-heavy
condition.

By Lemma \ref{LeRegion}, $G$ has an induced path $P$ from $a$ to
$a_i$ such that every vertex of $P$ is either in $\{a_j: 0\leq j\leq
i\}$ or an interior vertex of some regions (we set $a_0=a$). Let
$a,a'_1,a'_2,\ldots,a'_i$ be the first $i+1$ vertices of $P$. Note
that every vertex $a'_i$ is nonadjacent to every vertex in
$\{b,c\}\cup I_R$. If $abca$ is also a triangle in $G$, then
$\{a,b,c,a'_1,\ldots,a'_i\}$ induces a $Z_i$ in $G$. Thus one vertex
of $\{b,c\}$ is heavy in $G$ and one of $\{a'_{i-1},a'_i\}$ is heavy
in $G$. We assume without loss of generality that $b,a'_{i-1}$ are
heavy in $G$, and then, also are heavy in $\clc(G)$. Then
$\{b,a'_{i-1}\}$ is a heavy pair in $\clc(G)$, a contradiction. So
we only consider the case one edge of $\{ab,bc,ac\}$ does not exist
in $G$.

If $I_R=\emptyset$, then $R$ is a clique in $G$, and $ab,bc,ac\in
E(G)$, a contradiction. Thus, $I_R\neq\emptyset$. By Lemma
\ref{LeRegion}, $a$ has a neighbor in $I_R$.

\begin{claim}\cite[Claim 3 in the proof of Lemma 3]{NingLiZhang}\\
$d_{I_R}(a)=1$.
\end{claim}

\begin{proof}
If $a$ is contained in a triangle $axya$ such that $x,y\in I_R$,
then $\{a,x,y,a'_1,\ldots,\break a'_i\}$ induces a $Z_i$ in $G$. Thus one
vertex of $\{x,y\}$ is heavy in $G$ and one vertex of
$\{a'_{i-1},a'_i\}$ is heavy in $G$, a contradiction. Hence,
$N_{I_R}(a)$ is an independent set.

Suppose that $d_{I_R}(a)\geq 2$. Let $x,y$ be two vertices in
$N_{I_R}(a)$. Then $xy\notin E(G)$. Since $\{a,x,y,a'_1\}$ induces a
claw in $G$, and $\{a'_1,x\}$, $\{a'_1,y\}$ are not heavy pairs of
$G$, it follows $\{x,y\}$ is a heavy pair of $G$. Without loss of
generality, suppose that $x$ is heavy in $G$.

If $a$ is also heavy in $G$, then by Claim 2, $a,x$ have a common
neighbor in $I_R$, contradicting the fact that $N_{I_R}(a)$ is
independent. So we conclude that $a$ is light in $G$.

Since $\{x,y\}$ is a heavy pair of $G$, by Claim 2, $x,y$ have two
common neighbors in $I_R$. Let $x',y'$ be two vertices in
$N_{I_R}(x)\cap N_{I_R}(y)$. Clearly $ax',ay'\notin E(G)$.

If $x'y'\in E(G)$, then $\{x,x',y',a,a'_1,\ldots,a'_{i-1}\}$ induces
a $Z_i$ in $G$. Thus one vertex of $\{a'_{i-2},a'_{i-1}\}$ is heavy
in $G$. This implies either $\{x,a'_{i-2}\}$ or $\{x,a'_{i-1}\}$ is
a heavy pair of $G$, and also a heavy pair of $\clc(G)$, a
contradiction. So we conclude that $x'y'\notin E(G)$.

Note that $\{x,x',y',a\}$ induces a claw in $G$, and $a$ is light in
$G$. So one vertex of $\{x',y'\}$ is heavy in $G$. We assume without
loss of generality that $x'$ is heavy in $G$. By Claim 2, $x,x'$
have a common neighbor $x''$ in $I_R$. Clearly $ax''\notin E(G)$.
Thus $\{x,x',x'',a,a'_1,\ldots,a'_{i-1}\}$ induces a $Z_i$, and
hence one vertex of $\{a'_{i-2},a'_{i-1}\}$ is heavy in $G$, a
contradiction.
\end{proof}

Now let $x$ be the vertex in $N_{I_R}(a)$. The left part is
almost the same as in the proof of Lemma 3 in \cite{NingLiZhang}.
We rewrite it here.

\begin{claim}\cite[Claim 4 in the proof of Lemma 3]{NingLiZhang}\\
$N_R(a)=V(R)\backslash\{a\}$.
\end{claim}

\begin{proof}
Suppose that $V(R)\backslash\{a\}\backslash N_R(a)\neq\emptyset$. By
Lemma \ref{LeRegion}, $R-x$ is connected. Let $y$ be a vertex in
$V(R)\backslash\{a\}\backslash N_R(a)$ such that $a,y$ have a common
neighbor $z$ in $R-x$. Since $N_{I_R}(a)=\{x\}$ and $z\in
N_R(a)\backslash\{x\}$, $z$ is a frontier vertex of $R$. Let $z'$ be
a vertex in $N_{G-R}(z)$. Then $\{z,y,a,z'\}$ induces a claw in $G$.
Since $\{a,z'\}$, $\{y,z'\}$ are not heavy pairs of $G$, $\{a,y\}$
is a heavy pair of $G$. By Claim 2, $a,y$ have two common neighbors
in $I_R$, contradicting Claim 3.
\end{proof}

By Claims 3 and 4, we can see that $|I_R|=1$. Recall that one edge
of $\{ab,bc,ac\}$ is not in $E(G)$. By Claim 4, $ab,ac\in E(G)$.
This implies that $bc\notin E(G)$, and $\{a,b,c,a'_1\}$ induces a
claw in $G$. Since $\{b,a'_1\}$, $\{c,a'_1\}$ are not heavy pairs of
$G$, $\{b,c\}$ is a heavy pair of $G$. By Claim 2, $b,c$ have two
common neighbors in $I_R$, contradicting the fact that $|I_R|=1$.
\end{proof}

\begin{corollary}\label{CoSZi}
For $i=1$ or $i\geq 3$, the class of claw-o-heavy and $Z_i$-c-heavy
graphs is c-stable.
\end{corollary}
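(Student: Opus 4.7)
The plan is that this corollary should follow essentially immediately from combining Theorem \ref{ThCa}, Theorem \ref{ThSZ1}, and Theorem \ref{ThSZi}, so the ``proof'' is really just an assembly argument. Let $G$ be a claw-o-heavy and $Z_i$-c-heavy graph. By Theorem \ref{ThCa}(1), $\clc(G)$ is well defined, and by the discussion following the definition of the c-closure (the line-graph characterization in Theorem \ref{ThCa}(2) implies $\clc(G)$ is claw-free, hence trivially claw-o-heavy), the claw-o-heavy property is preserved. So the only thing to verify is that the $Z_i$-c-heavy property survives.

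First I would handle the case $i = 1$. Here Theorem \ref{ThSZ1} directly states that $\clc(G)$ is $Z_1$-c-heavy, so there is nothing more to do.

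Next I would handle the case $i \geq 3$. Theorem \ref{ThSZi} guarantees that $\clc(G)$ is in fact $Z_i$-free. Since an $S$-free graph is vacuously $S$-c-heavy (there are no induced copies of $S$ on which to check the c-heavy condition), $\clc(G)$ is $Z_i$-c-heavy.

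Putting these together, in both cases $\clc(G)$ lies in the same class as $G$, which is precisely the definition of c-stability. There is no genuine obstacle here — all the real work has already been done in Theorems \ref{ThSZ1} and \ref{ThSZi}; the corollary is the clean statement that packages both results (together with the trivial observation that $Z_i$-freeness implies $Z_i$-c-heaviness) into a single c-stability claim. The only thing one should flag is why $i = 2$ is excluded: as mentioned after Theorem \ref{ThSPi}, there exist claw-free $Z_2$-free graphs whose r-closure introduces an induced $Z_2$, and that obstruction prevents any analogue of Theorems \ref{ThSZ1}/\ref{ThSZi} for $i = 2$.
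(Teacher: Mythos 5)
Your assembly is correct and is exactly how the paper intends the corollary to follow: Theorem \ref{ThSZ1} gives the $i=1$ case directly, Theorem \ref{ThSZi} gives $Z_i$-freeness (hence vacuous $Z_i$-c-heaviness) for $i\geq 3$, and claw-o-heaviness is preserved since $\clc(G)$ is claw-free. Only your closing aside is slightly off: the true reason $i=2$ is excluded is not the r-closure remark (which applies to all $i$ and only blocks a single-step analogue of Theorem \ref{ThSPi}) but the explicit counterexample $G_1$ in the proof of Theorem \ref{ThSA}, showing the class is genuinely not c-stable for $i=2$.
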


\begin{theorem}\label{ThSA}
Let $S$ be a connected claw-free and $K_{1,1,2}$-free graph of order
at least 3. Then the class of claw-o-heavy and $S$-c-heavy graphs is
c-stable, if and only if
$$S\in\{K_i: i\geq 3\}\cup\{P_i: i\geq 3\}
\cup\{Z_i: i=1 \mbox{ or } i\geq 3\}.$$
\end{theorem}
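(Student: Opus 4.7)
I would establish the two implications separately: sufficiency by direct appeal to the stability results already proven in the paper, and necessity by constructing a counterexample for each excluded $S$. For the sufficiency, each case reduces to one line. If $S = K_i$ with $i \geq 3$, then $V(S)$ is the unique maximal clique of $S$ and $S - V(S)$ is empty, so every graph is vacuously $S$-c-heavy and c-stability is automatic. If $S = P_i$ with $i \geq 3$, Corollary~\ref{CoSPi} gives the result. If $S = Z_i$ with $i = 1$ or $i \geq 3$, Corollary~\ref{CoSZi} applies; indeed Theorem~\ref{ThSZi} gives the stronger conclusion that $\clc(G)$ is $Z_i$-free when $i \geq 3$.

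For the necessity direction, I would exhibit, for each connected claw-free and $K_{1,1,2}$-free graph $S$ of order at least $3$ outside the claimed list, a claw-o-heavy and $S$-c-heavy graph $G$ such that $\clc(G)$ is not $S$-c-heavy. Theorem~\ref{ThBrRyFa} provides a convenient dichotomy: the graphs in its r-stable list that are not in our list are exactly $H$, $Z_2$, and every $N_{i,j,k}$; any other non-complete candidate $S$ lies outside the r-stable list, so the class of claw-free $S$-free graphs already fails to be r-stable, and a witness $G_0$ is automatically claw-o-heavy and $S$-c-heavy. By forming $G = G_0 \cup tK_1$ for sufficiently large $t$, I ensure no vertex of $G_0$ has degree at least $|V(G)|/2$; since isolated vertices are never c-eligible and no heavy pairs exist in $G$, the c-closure coincides with the r-closure on the $G_0$-component, and the new induced copy of $S$ in $\clr(G_0)$ has, after removal of the relevant maximal clique, a non-trivial component of vertices all light in $\clc(G)$, yielding the desired violation. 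For the three remaining families $H$, $Z_2$, and $N_{i,j,k}$, I would build small tailored examples in the same spirit: introduce a c-eligible vertex $x$ (exploiting c-eligibility condition~(2), which involves a genuine heavy pair, since r-eligibility alone is too weak here) whose completion creates an induced $S$ of the desired type, and pad with isolated vertices to depress $|V(G)|/2$ below the degrees of the remaining vertices.

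The main obstacle is the degree control in the necessity constructions. Completion at $x$ raises the degrees of all vertices in $N(x)$, so a vertex light in $G$ can become heavy in $\clc(G)$, spoiling the intended violation; meanwhile, I must also verify globally that $G$ itself is $S$-c-heavy, not merely at the copy of $S$ about to be created. For the three c-stability-failing cases $H$, $Z_2$, and every $N_{i,j,k}$, the construction additionally requires a genuine heavy pair (since these $S$ lie in Theorem~\ref{ThBrRyFa}'s r-stable list, r-closure alone cannot create new induced copies of $S$), which makes the degree bookkeeping tighter. Producing a uniform construction across the infinite family $N_{i,j,k}$, parameterized by $i, j, k$, is where the proof would spend the bulk of its length.
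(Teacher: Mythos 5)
Your sufficiency argument and your overall dichotomy for necessity (splitting on whether $S$ lies in the r-stable list of Theorem~\ref{ThBrRyFa}) coincide with the paper's, and the disjoint-union-with-isolated-vertices device for $S$ outside that list is exactly the paper's construction. But the necessity direction has two genuine gaps. The smaller one: in the disjoint-union case you assert that the induced $S$ appearing in $\clc(G)=\clr(G)$ has, after removing a suitable maximal clique, a non-trivial component --- but that is precisely what can fail. If every maximal clique $C$ of $S$ leaves only isolated vertices in $S-C$, then every graph is vacuously $S$-c-heavy and no violation is produced. The paper closes this with a structural argument showing that a connected, claw-free, $K_{1,1,2}$-free graph $S$ of order at least $3$ with this degeneracy must be a complete graph or $P_3$ (both already in the list); some version of that analysis is required and is absent from your plan.

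The larger gap concerns the three families $H$, $Z_2$ and $N_{i,j,k}$, which are the heart of the proof, and here your proposed mechanism is self-defeating. You correctly note that for these $S$ the c-closure must be driven by genuine heavy pairs, since r-eligible completions alone cannot create new induced copies of $S$. But ``padding with isolated vertices'' raises the threshold $n/2$ while leaving all degrees fixed, so it destroys every heavy pair and every heavy vertex; the c-closure then collapses to the r-closure, and a graph with no heavy vertices that is $S$-c-heavy for one of these non-degenerate $S$ is forced to be $S$-free, whence $\clr(G)$ is again $S$-free by Theorem~\ref{ThBrRyFa} and no violation can ever arise. What is needed is the opposite of padding: a construction containing both heavy vertices (to supply the heavy pairs that force completions creating a new induced $S$) and light vertices (to witness that the new copy is not c-heavy). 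The paper does this with the explicit graphs $G_1,\dots,G_4$ of Figure~6, each built around a large clique $K_r$ fully joined to part of the configuration, so that the vertices near $K_r$ become heavy and drive the closure while the peripheral vertices stay light and form the offending component of the resulting induced $Z_2$, $N$, $N_{i,j,k}$ or $H$; the parameter constraints (e.g.\ $k+3\le r\le 2k-2$ for $G_2$, $r\ge 4t$ for $G_3$) are exactly what make these graphs claw-o-heavy and $S$-c-heavy to begin with. None of this appears in your proposal, so the necessity direction remains unproven for precisely the cases that require work.
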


\begin{proof}
If $S=K_i$, $i\geq 3$, then every graph is $S$-c-heavy, and the
class of claw-o-heavy and $S$-c-heavy graphs is c-stable. If
$S=P_i$, $i\geq 3$ or $S=Z_i$, $i=1$ or $i\geq 3$, then by
Corollaries \ref{CoSPi} and \ref{CoSZi}, the class of claw-o-heavy
and $S$-c-heavy graphs is c-stable. This completes the `if' part of
the proof.

Now we consider the `only if' part of the theorem. We first
construct some claw-o-heavy graphs as in Figure 6.

\begin{center}
\setlength{\unitlength}{0.90pt} \small
\begin{picture}(460,240)

\put(0,120){\thinlines \put(60,70){\circle{80}} \put(55,65){$K_r$}
\multiput(140,30)(0,60){2}{\multiput(0,0)(0,20){2}{\circle*{4}}}
\put(140,30){\line(-1,0){80}} \qbezier(140,30)(100,110)(60,110)
\qbezier(140,50)(100,30)(60,30) \qbezier(140,50)(100,110)(60,110)
\qbezier(140,90)(100,30)(60,30) \qbezier(140,90)(100,110)(60,110)
\qbezier(140,110)(100,30)(60,30) \put(140,110){\line(-1,0){80}}
\thicklines \qbezier[4](140,50)(140,70)(140,90)

\put(180,30){\circle*{4}} \put(180,110){\circle*{4}}
\put(220,30){\circle*{4}} \put(220,110){\circle*{4}}
\put(180,30){\line(-1,0){40}} \put(180,30){\line(-2,1){40}}
\put(180,30){\line(-2,3){40}} \put(180,30){\line(-1,2){40}}
\put(180,110){\line(-1,0){40}} \put(180,110){\line(-2,-1){40}}
\put(180,110){\line(-2,-3){40}} \put(180,110){\line(-1,-2){40}}
\put(180,30){\line(0,1){80}} \put(180,30){\line(1,0){40}}
\put(180,110){\line(1,0){40}} \put(220,30){\line(0,1){80}}
\put(144,32){$a_1$} \put(144,49){$a_2$} \put(144,87){$a_{r-1}$}
\put(144,104){$a_r$} \put(182,32){$b_1$} \put(182,104){$b_2$}
\put(210,32){$c_1$} \put(210,104){$c_2$}

\put(100,10){$G_1$ ($r\geq 3$)}}

\put(240,120){\thinlines \put(60,70){\circle{80}} \put(55,65){$K_r$}
\multiput(140,30)(0,50){2}{\multiput(0,0)(0,30){2}{\circle*{4}}}
\put(140,30){\line(-1,0){80}} \qbezier(140,30)(100,110)(60,110)
\qbezier(140,60)(100,30)(60,30) \qbezier(140,60)(100,110)(60,110)
\qbezier(140,80)(100,30)(60,30) \qbezier(140,80)(100,110)(60,110)
\qbezier(140,110)(100,30)(60,30) \put(140,110){\line(-1,0){80}}
\put(185,45){\circle{30}} \put(185,95){\circle{30}}
\put(140,30){\line(1,0){45}} \qbezier(140,30)(170,60)(185,60)
\put(140,110){\line(1,0){45}} \qbezier(140,110)(170,80)(185,80)
\put(180,40){$K_k$} \put(180,90){$K_k$}

\thicklines

\multiput(160,60)(0,10){3}{\circle*{4}} \put(185,60){\circle*{4}}
\put(185,80){\circle*{4}} \put(140,30){\line(0,1){80}}
\put(160,60){\line(0,1){20}} \put(140,60){\line(1,0){20}}
\put(140,80){\line(1,0){20}} \put(185,60){\line(0,1){20}}
\put(142,32){$a_1$} \put(142,53){$a_2$} \put(142,82){$a_3$}
\put(142,103){$a_4$} \put(187,61){$b_1$} \put(163,56){$b_2$}
\put(163,79){$b_3$} \put(187,74){$b_4$}

\put(60,10){$G_2$ ($k+3\leq r\leq 2k-2$)}}

\put(0,0){\thinlines \put(60,70){\circle{80}} \put(55,65){$K_r$}
\put(140,30){\line(-1,0){80}} \qbezier(140,30)(100,110)(60,110)
\qbezier(140,60)(100,30)(60,30) \qbezier(140,60)(100,110)(60,110)
\qbezier(140,80)(100,30)(60,30) \qbezier(140,80)(100,110)(60,110)
\qbezier(140,110)(100,30)(60,30) \put(140,110){\line(-1,0){80}}

\thicklines

\multiput(140,30)(0,50){2}{\multiput(0,0)(0,30){2}{\multiput(0,0)(27,0){4}{\circle*{4}}
\put(0,0){\line(1,0){27}} \qbezier[4](27,0)(40.5,0)(54,0)
\put(54,0){\line(1,0){27}}} \put(80,0){\line(0,1){30}}
\put(80,15){\circle*{4}}} \put(140,33){$a_0$} \put(163,33){$a_1$}
\put(186,33){$a_{t-1}$} \put(211,33){$a_t$} \put(140,63){$b_0$}
\put(163,63){$b_1$} \put(186,63){$b_{t-1}$} \put(211,63){$b_t$}
\put(140,83){$c_0$} \put(163,83){$c_1$} \put(186,83){$c_{t-1}$}
\put(211,83){$c_t$} \put(140,113){$d_0$} \put(163,113){$d_1$}
\put(186,113){$d_{t-1}$} \put(211,113){$d_t$}

\put(50,10){$G_3$ ($t\geq\max\{i,j,k\}$ and $r\geq 4t$)}}

\put(240,0){\thinlines \put(60,70){\circle{80}} \put(55,65){$K_r$}
\multiput(140,30)(0,50){2}{\multiput(0,0)(0,30){2}{\circle*{4}}}
\put(140,30){\line(-1,0){80}} \qbezier(140,30)(100,110)(60,110)
\qbezier(140,60)(100,30)(60,30) \qbezier(140,60)(100,110)(60,110)
\qbezier(140,80)(100,30)(60,30) \qbezier(140,80)(100,110)(60,110)
\qbezier(140,110)(100,30)(60,30) \put(140,110){\line(-1,0){80}}

\thicklines

\put(170,30){\circle*{4}} \put(170,110){\circle*{4}}
\multiput(200,30)(0,50){2}{\multiput(0,0)(0,30){2}{\circle*{4}}}
\put(140,30){\line(1,0){60}} \put(140,110){\line(1,0){60}}
\put(170,30){\line(-1,1){30}} \put(170,30){\line(1,1){30}}
\put(170,110){\line(-1,-1){30}} \put(170,110){\line(1,-1){30}}
\put(200,30){\line(0,1){80}} \put(142,32){$a_1$} \put(143,58){$a_2$}
\put(143,77){$a_3$} \put(142,103){$a_4$} \put(165,35){$b_1$}
\put(165,100){$b_4$} \put(190,32){$c_1$} \put(189,58){$c_2$}
\put(189,77){$c_3$} \put(190,103){$c_4$}

\put(90,10){$G_4$ ($r\geq 8$)}}

\end{picture}
{\small Figure 6. Some claw-o-heavy graphs.}

\end{center}

Suppose $S$ is a claw-free and $K_{1,1,2}$-free graph such that the
class of claw-o-heavy and $S$-c-heavy graphs is c-stable. Consider
the case where the class of claw-free and $S$-free graphs is
r-stable. By Theorem \ref{ThBrRyFa}, $S\in\{C_3,H\}\cup\{P_i: i\geq
1\}\cup\{Z_i: i\geq 1\}\cup\{N_{i,j,k}: i,j,k\geq 1\}$. Now we will
explain why the graphs in Figure 6. are required graphs.
\begin{itemize}
\item The graph
$G_1$ is $Z_2$-c-heavy, and the closure $\clc(G_1)$ is obtained by adding
all possible edges between vertices in the
$V(K_r)\cup\{a_1,\ldots,a_r,b_1,b_2\}$. Notice that the subgraph of
$\clc(G_1)$ induced by $\{a_1,a_2,b_1,c_1,c_2\}$ is a $Z_2$ which is
not c-heavy in $\clc(G_1)$.

\item The graph $G_2$ is $N$-c-heavy, and the
closure $\clc(G_2)$ is obtained by adding all possible edges between
vertices in the $V(K_r)\cup\{a_1,\ldots,a_4\}$. Notice that the subgraph of
$\clc(G_2)$ induced by $\{a_1,b_1,a_2,b_2,a_3,b_3\}$ is an $N$ which
is not c-heavy in $\clc(G_2)$ (noting that $a_2,a_3$ are not heavy
in $\clc(G)$).

\item The graph $G_3$ is $N_{i,j,k}$-c-heavy for
$\max\{i,j,k\}\geq 2$ (in fact, it is $N_{i,j,k}$-free),
and the closure $\clc(G_3)$ is obtained by adding all possible edges between
vertices in the $V(K_r)\cup\{a_0,b_0,c_0,d_0\}$. Notice that the subgraph of
$\clc(G_3)$ induced by
$\{a_0,\ldots,a_i,\break b_0,\ldots,b_j, c_0,\ldots,c_k\}$ is an $N_{i,j,k}$
which is not c-heavy in $\clc(G_3)$.

\item The graph $G_4$ is $H$-c-heavy
($\max\{i,j,k\}\geq 2$) (in fact, it is $H$-free), and the closure
$\clc(G_4)$ is obtained by adding all possible edges between
vertices in the $V(K_r)\cup\{a_1,\ldots,a_4\}$. Notice that the subgraph of
$\clc(G_4)$ induced by $\{a_1,a_2,b_1,c_1,c_2\}$ is an $H$ which is
not c-heavy in $\clc(G_4)$.
\end{itemize}
Thus, we can see $S$ is $C_3$, $P_i$, $i\geq 1$ or
$Z_i$, $i=1$ or $i\geq 3$.

Next we consider the case where the class of claw-free and $S$-free
graphs is not r-stable. Let $G'$ be a claw-free and $S$-free graph
such that $\clr(G)$ is not $S$-free. Let $G$ be the disjoint union
of $G'$ and an empty graph of order $|V(G')|$. Clearly $G$ is
claw-free and $S$-free, and then, claw-o-heavy and $S$-c-heavy. Let
$G_i$, $1\leq i\leq r$, be the sequence of graphs in the definition
of the c-closure of $G$, where $G=G_1$ and $\clc(G)=G_r$. Note that
for every $i$, every vertex of $G_i$ has degree less than
$|V(G)|/2$. This implies that the c-eligible vertices of $G_i$ are
exactly the r-eligible ones. Thus $\clc(G)=\clr(G)$ and $\clc(G)$
contains an induced $S$. Note that $\clc(G)$ has no heavy vertex. If
$S$ has a maximal clique $C$ such that $S-C$ has a nontrivial
component, then the induced $S$ in $\clc(G)$ is not c-heavy, a
contradiction. So we conclude that for every maximal clique $C$ of
$S$, $S-C$ has only isolated vertices.

Let $C$ be a maximal clique of $S$. If $V(S)\backslash
V(C)=\emptyset$, then $S$ is a complete graph $K_k$. Now we consider
the case that $V(S)\backslash V(C)\neq\emptyset$. Note that every vertex of
$S-C$ is an isolated vertex. Let $x$ be a vertex in $S-C$. Since $C$
is a maximal clique, $C\backslash N_S(x)\neq\emptyset$. If
$|C\backslash N_S(x)|\geq 2$, then let $C'$ be a maximal clique of
$S$ containing $x$. Then $S-C'$ will have a nontrivial component, a
contradiction. So we conclude that $|C\backslash N_S(x)|=1$. Let $y$
be the vertex in $C\backslash N_S(x)$. By our assumption that $S$ is
connected, $|C|\geq 2$. If $|C|\geq 3$, letting $z,z'$ be two
vertices of $C\backslash\{y\}$, then $\{x,y,z,z'\}$ induces a
$K_{1,1,2}$ of $S$, a contradiction. Thus we conclude that $C$ has
exactly two vertices. Let $z$ be the vertex of $C$ other than $y$.
Note that $C'=C\cup\{x\}\backslash\{y\}$ is a maximal clique of $S$.
Every vertex of $S-C'$ is nonadjacent to $y$. If $S-C$ has a vertex
$w$ other than $x$, then $\{z,x,y,w\}$ induces a claw in $S$, a
contradiction. This implies that $S-C$ has only one vertex $x$, and
$S=P_3$, a contradiction.
\end{proof}

By Theorem \ref{ThSA}, the class of claw-o-heavy and $N$-c-heavy
graphs is not c-stable. However, we have a slightly larger class of graphs
which is c-stable.

Let $G$ be a graph and $M$ be an induced $N$ in $G$.
We denote the vertices of $M$ as in Figure 5. Note that $M$ is
c-heavy in $G$ if and only if there are two vertices $u,v$ of $M$ which are heavy
in $G$ such that
$\{u,v\}\notin\{\{a,a_1\},\{b,b_1\},\{c,c_1\}\}$.
Now we say that $M$ is p-heavy in $G$
if there are two vertices $u,v$ of $M$ with
$d(u)+d(v)\geq n$, such that $\{u,v\}\notin\{\{a,a_1\},\{b,b_1\},\{c,c_1\}\}$. Also, we say that $G$ is $N$-p-heavy if every
induced $N$ in $G$ is p-heavy. Note that an $N$-c-heavy graph is
also $N$-p-heavy.

Now we prove that the class of claw-o-heavy and $N$-p-heavy graphs
is c-stable.

\begin{theorem}\label{ThSN}
Let $G$ be a claw-o-heavy and $N$-p-heavy graph, and $x$ be a
c-eligible vertex of $G$. Then $G'_x$ is $N$-p-heavy.
\end{theorem}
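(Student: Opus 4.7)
The plan is to fix an induced net $N'$ in $G'_x$ with triangle $abc$ and pendants $a_1,b_1,c_1$ (labelled as in Figure 5), and to run a case analysis on the set $V(N')\cap N(x)$. Since $N(x)$ is a clique of $G'_x$ and $N'$ is induced in $G'_x$, this intersection is a clique of the net, so it is contained either in the triangle $\{a,b,c\}$ or in a pendant edge; moreover, the edges of $N'$ that are ``new'' (present in $G'_x$ but absent from $G$) are precisely those pairs of $V(N')\cap N(x)$ that form an edge of $N'$.

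The first batch of cases are those in which no edge of $N'$ is new. This happens whenever $x\in V(N')$, because then the $N'$-neighbours of $x$ must lie in $N(x)$ and form a clique of $N$, which forces $x$ into the role of a pendant and so $|V(N')\cap N(x)|=1$. It also happens when $x\notin V(N')$ and $|V(N')\cap N(x)|\le 1$, or when every $N'$-edge with both endpoints in $V(N')\cap N(x)$ already lies in $G$. In any such case $N'$ is induced in $G$, so $N'$ is p-heavy in $G$, and since $d_{G'_x}\ge d_G$ the same witnessing pair certifies p-heaviness of $N'$ in $G'_x$.

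When $V(N')\cap N(x)$ lies in the triangle and contains a missing edge of $N'$, we plan to exhibit an induced claw of $G$ whose leaves lie in $V(N')\cup\{x\}$ and invoke claw-o-heaviness. The representative sub-case is $V(N')\cap N(x)=\{a,b\}$ with $ab\notin E(G)$: using $c,a_1\notin N(x)$ one checks that $\{a;x,c,a_1\}$ is an induced claw of $G$, so claw-o-heaviness produces a heavy pair of $G$ in $\{x,c,a_1\}$. If that pair avoids $x$ it already lies in $V(N')$ and is non-forbidden in $N'$; if it contains $x$, Lemma~\ref{LeCa}(1) lets us replace $x$ by a well-chosen neighbour from $N(x)$ (for example $b$ in the pair $\{x,a_1\}$), yielding a non-forbidden heavy pair of $N'$ in $G'_x$. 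Sub-cases of $|V(N')\cap N(x)|=3$ are resolved analogously by claws such as $\{c;a,b,c_1\}$ or $\{x;a,b,c\}$, with an auxiliary-net construction in the remaining configurations where no such claw is available.

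The main obstacle is the pendant-edge case $V(N')\cap N(x)=\{a,a_1\}$ with $aa_1\notin E(G)$, because the claw analogous to $\{a;x,c,a_1\}$ is destroyed by the edge $xa_1\in E(G)$. Here the key idea is to build an auxiliary induced net in $G$ by swapping $a_1$ for $x$: set $S=\{a,b,c,x,b_1,c_1\}$. Since $b,c,b_1,c_1\notin N(x)$, the only edge between $x$ and $S\setminus\{x\}$ in $G$ is $xa$, and the inherited $N$-edges $ab,ac,bc,bb_1,cc_1$ all survive (each such pair has at most one endpoint in $\{a,a_1\}$, so is not new in $G'_x$), giving $G[S]\cong N$ with $x$ in the role of the pendant at $a$. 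Applying the $N$-p-heavy hypothesis to $S$ produces $u,v\in S$ with $d_G(u)+d_G(v)\ge n$ and $\{u,v\}\notin\{\{a,x\},\{b,b_1\},\{c,c_1\}\}$. If $x\notin\{u,v\}$ the pair lies in $V(N')$ and is non-forbidden in $N'$ (the extra forbidden pair $\{a,a_1\}$ cannot arise because $a_1\notin S$); if $x\in\{u,v\}$, say $u=x$, then $v\ne a$, and Lemma~\ref{LeCa}(1) applied to $a_1\in N(x)$ gives $d_{G'_x}(a_1)+d_{G'_x}(v)\ge d_G(x)+d_G(v)\ge n$, with $\{a_1,v\}$ avoiding every forbidden pair of $N'$.
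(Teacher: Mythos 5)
Your proposal is correct and follows essentially the same route as the paper's proof: split according to which edges of the net are destroyed when passing from $G$ to $G'_x$, handle the pendant-edge case and the two-missing-triangle-edges case by an auxiliary induced net of $G$ containing $x$, handle the one- and three-missing-edges cases by an induced claw of $G$, and use Lemma~\ref{LeCa}(1) to trade $x$ for a neighbour in any witnessing pair. The only differences are cosmetic (you organize by $V(N')\cap N(x)$ instead of by $E(M)\cap B_G(x)$, pick a claw centred at $a$ where the paper centres one at $c$, and substitute $a_1$ where the paper substitutes $a$).
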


\begin{proof}
Let $M$ be an induced $N$ in $G'_x$. We will prove that $M$ is
p-heavy. We denote the vertices of $M$ as in Figure 5. Let
$n=|V(G)|$. If $M$ is also an induced subgraph of $G$, then $M$ is
p-heavy in $G$, and then, is p-heavy in $G'_x$.

Now we consider the case $E(M)\cap B_G(x)\neq\emptyset$.
First suppose that $aa_1\in B_G(x)$. Note that $N(x)$ is a clique
in $G'_x$. This implies that $N(x)\cap V(M)=\{a,a_1\}$. Thus
$\{a,x,b,b_1,c,c_1\}$ induces an $N$ in $G$. Since $G$ is
$N$-p-heavy and $d_{G'_x}(a)\geq d_{G'_x}(x)\geq d(x)$, $M$ is
p-heavy in $G'_x$. Now we consider the case $aa_1\notin B_G(G)$, and
similarly, $bb_1,cc_1\notin B_G(G)$. Thus at least one edge in
$\{ab,ac,bc\}$ is in $B_G(x)$.

If $|B_G(x)\cap\{ab,ac,bc\}|=1$, then without loss of
generality, suppose that $ab\in B_G(x)$. Then $\{c,a,b,c_1\}$ induces a claw.
Thus one of the three pairs $\{a,b\},\{a,c_1\},\{b,c_1\}$ is a heavy
pair in $G$, and then has degree sum at least $n$ in $G'_x$. Hence
$M$ is p-heavy in $G'_x$.

If $|B_G(x)\cap\{ab,ac,bc\}|=2$, then without loss of
generality, suppose that $ab,ac\in B_G(x)$. Then $\{x,a,b,b_1,c,c_1\}$
induces an $N$. Thus there are two vertices $u,v$ in
$\{x,a,b,b_1,c,c_1\}$ such that
$\{u,v\}\notin\{\{x,a\},\{b,b_1\},\{c,c_1\}\}$, with degree sum at
least $n$ in $G$. Since $d_{G'_x}(a)\geq d(x)$, we can see that $M$
is p-heavy.

If $|B_G(x)\cap\{ab,ac,bc\}|=3$, then all the three
edges $\{ab,ac,bc\}$ are in $B_G(x)$, which implies that
$\{x,a,b,c\}$ induces a claw in $G$. So, one pair of
$\{\{a,b\},\{a,c\},\break\{b,c\}\}$ is a heavy pair in $G$, and then has
degree sum at least $n$ in $G'_x$. Hence, $M$ is p-heavy in $G'_x$.
\end{proof}

\begin{corollary}\label{CoSN}
The class of claw-o-heavy and $N$-p-heavy graphs is c-stable.
\end{corollary}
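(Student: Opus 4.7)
The plan is to deduce Corollary \ref{CoSN} directly from Theorem \ref{ThSN} by a short induction on the length of the closure sequence. Recall that by definition $\clc(G)$ is obtained as the last term $G_t$ of a finite sequence $G_1=G,G_2,\ldots,G_t=\clc(G)$, where each $G_{i+1}=(G_i)'_{x_i}$ is produced from $G_i$ by the local operation at a c-eligible vertex $x_i$. So it suffices to show that the property of being simultaneously claw-o-heavy and $N$-p-heavy is preserved by one step of this operation.

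First I would invoke Lemma \ref{LeCa}(2) to see that claw-o-heaviness is preserved: if $G_i$ is claw-o-heavy, then so is $G_{i+1}=(G_i)'_{x_i}$, and moreover $x_i$ being c-eligible in $G_i$ already makes sense because $G_i$ is claw-o-heavy. Next I would apply Theorem \ref{ThSN} to conclude that $N$-p-heaviness is also preserved, since that theorem is stated precisely in the form: if $G$ is claw-o-heavy and $N$-p-heavy and $x$ is c-eligible in $G$, then $G'_x$ is $N$-p-heavy. Combining these two observations, induction on $i$ gives that every $G_i$ (and in particular $G_t=\clc(G)$) is claw-o-heavy and $N$-p-heavy.

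There is really no obstacle here; the substantive work is already done in Theorem \ref{ThSN}, and the corollary is just the iterated version obtained by traversing the c-closure sequence. The only point worth spelling out explicitly is that Theorem \ref{ThSN} together with Lemma \ref{LeCa}(2) guarantees that the hypotheses on $G_i$ needed to apply Theorem \ref{ThSN} at the next step (namely claw-o-heaviness) are available throughout the induction, so the process is well-defined all the way to the end of the closure sequence.
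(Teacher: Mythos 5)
Your proof is correct and is exactly the argument the paper intends: the corollary is stated without proof precisely because it follows by iterating Theorem~\ref{ThSN} along the c-closure sequence, with Lemma~\ref{LeCa}(2) preserving claw-o-heaviness at each step. Your explicit induction spells out the same reasoning, so there is nothing to add.
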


\section{Proof of Theorem \ref{ThcHeavy}}

Note that every graph is $P_3$-c-heavy and $C_3$-c-heavy, and there
indeed exist some 2-connected claw-o-heavy graphs which are not
hamiltonian. The `only if' part of the theorem can be deduced by
Theorem \ref{ThFaGo} immediately. Now we prove the `if' part of the
theorem.

\subsection*{The cases $S=P_4$, $P_5$, $P_6$.}

Note that every $P_4$-c-heavy graph is $P_5$-c-heavy and every
$P_5$-c-heavy graph is $P_6$-c-heavy. We only need to prove the case
$S=P_6$.

Let $G$ be a claw-o-heavy and $P_6$-c-heavy graph. By Theorem
\ref{ThCa} and Corollary \ref{CoSPi}, $\clc(G)$ is claw-free and
$P_6$-free. By Theorem \ref{ThDuBrBeFa}, $\clc(G)$ is hamiltonian,
and by Theorem \ref{ThCa}, so is $G$.

\subsection*{The cases $S=Z_1$, $B$, $N$.}

Note that every $Z_1$-c-heavy graph is $B$-c-heavy and every
$B$-c-heavy graph is $N$-c-heavy. We only need deal with the case
$S=N$.

Let $G$ be a claw-o-heavy and $N$-c-heavy graph. Note that every
$N$-c-heavy graph is also $N$-p-heavy. By Theorem \ref{ThCa} and
Corollary \ref{CoSN}, $\clc(G)$ is claw-free and $N$-p-heavy. If
$\clc(G)$ is hamiltonian, then so is $G$. So we assume that
$\clc(G)$ is not hamiltonian. Since $\clc(G)$ is 2-connected and
claw-free, by Theorem \ref{ThBr}, $\clc(G)$ has an induced subgraph
in $\mathcal{P}$. We denote the notation $a_i,b_i$ $i=1,2,3$ as in
Section 2 and let $n=|V(G)|$.

Note that $\clc(G)$ has no heavy pair. Since $\clc(G)$ is
$N$-p-heavy, every induced $N$ of $\clc(G)$ has two vertices in its
triangle with degree sum at least $n$. Since both triangles
$a_1a_2a_3a_1$ and $b_1b_2b_3b_1$ are contained in some induced $N$
of $\clc(G)$, two vertices of $\{a_1,a_2,a_3\}$ have degree sum at
least $n$ and two vertices of $\{b_1,b_2,b_3\}$ have degree sum at
least $n$. We assume without loss of generality that $a_1$ has the
maximum degree in $\clc(G)$ among all the six vertices. Then two
pairs of $\{\{a_1,b_1\},\{a_1,b_2\},\{a_1,b_3\}\}$ have degree sum
at least $n$. Since $a_1$ is nonadjacent to $b_2,b_3$, $\clc(G)$ has
a heavy pair, a contradiction.

\subsection*{The cases $S=Z_2$, $W$.}
Note that every $Z_2$-c-heavy graph is $W$-c-heavy. We only need to
prove the case $S=W$. If $G$ is $W$-c-heavy, then it is also $W$-o-heavy. By Theorem
\ref{ThLiRyWaZh}, $G$ is hamiltonian.

\subsection*{The case $S=Z_3$.}

Let $G$ be a claw-o-heavy and $Z_3$-c-heavy graph. By Theorem
\ref{ThCa} and Theorem \ref{ThSZi}, $\clc(G)$ is claw-free and
$Z_3$-free. By Theorem \ref{ThDuBrBeFa}, $\clc(G)$ is hamiltonian or
$\clc(G)=L_1$ or $L_2$ (see Figure 1). If $\clc(G)=L_1$ or $L_2$,
then $G$ has no c-eligible vertices (any c-eligible vertex of $G$ is
an interior vertex and of degree at least 3 in $\clc(G)$). Thus
$G=\clc(G)=L_1$ or $L_2$, contradicting the assumption $n\geq 10$.

\section{One remark}
In fact, in this paper we prove the following theorem, which is a
common extension of the case $S=N$ in Theorems \ref{ThLiRyWaZh},
\ref{ThNiZh} and \ref{ThcHeavy}.

\begin{theorem}\label{ThNpHeavy}
Let $G$ be a 2-connected graph. If $G$ is claw-o-heavy and
$N$-p-heavy, then $G$ is hamiltonian.
\end{theorem}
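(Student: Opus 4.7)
The plan is to reduce the problem to the claw-free, closure-closed setting and then exploit Brousek's structural theorem, mirroring the argument used for the case $S=N$ in the proof of Theorem \ref{ThcHeavy}. Since $G$ is claw-o-heavy and $N$-p-heavy, Corollary \ref{CoSN} yields that $\clc(G)$ is also claw-o-heavy and $N$-p-heavy. By Theorem \ref{ThCa}, $\clc(G)$ is the line graph of a triangle-free graph (hence claw-free) and has the same circumference as $G$; as the closure only adds edges it preserves 2-connectedness, and by Lemma \ref{LeHeavyPair} the graph $\clc(G)$ contains no heavy pair. It therefore suffices to show that $\clc(G)$ is hamiltonian.

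Suppose instead that $\clc(G)$ is not hamiltonian. Since $\clc(G)$ is 2-connected and claw-free, Theorem \ref{ThBr} provides an induced subgraph $H\in\mathcal{P}$ built from two vertex-disjoint triangles $a_1a_2a_3a_1$ and $b_1b_2b_3b_1$ connected by three internally disjoint branches, each a path $P_{k_i}$ with $k_i\geq 3$ or a triangle. I next argue that each of the two triangles is contained in an induced copy of $N$ in $\clc(G)$: for the triangle $a_1a_2a_3$, for each $i$ pick a neighbor $u_i$ of $a_i$ lying on the $i$-th branch (the first internal vertex of the path, or the third vertex of the triangle link). Because the three branches share no vertex outside the two triangles, the $u_i$ are pairwise non-adjacent and no $u_i$ is adjacent to $a_j$ for $j\neq i$, so $\{a_1,a_2,a_3,u_1,u_2,u_3\}$ induces an $N$ in $\clc(G)$; an analogous statement holds for $b_1b_2b_3$.

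I then apply the $N$-p-heavy hypothesis to each of these induced $N$'s. Since $\clc(G)$ has no heavy pair, any pair of vertices with degree sum at least $n$ must in fact be an edge; the only edges of $N$ not excluded by the definition of p-heavy are the three triangle edges. Hence two vertices of $\{a_1,a_2,a_3\}$ have degree sum at least $n$ in $\clc(G)$, and similarly for $\{b_1,b_2,b_3\}$. Assume without loss of generality that $a_1$ has the maximum degree among the six vertices of the two triangles; if $d(b_i)+d(b_j)\geq n$ with $i\neq j$, then $d(a_1)\geq d(b_i),d(b_j)$ yields $d(a_1)+d(b_i)\geq n$ and $d(a_1)+d(b_j)\geq n$. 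But in the structure of $\mathcal{P}$ the vertex $a_1$ is non-adjacent to both $b_2$ and $b_3$, so at least one of $\{a_1,b_i\},\{a_1,b_j\}$ is a non-edge of $\clc(G)$ with degree sum at least $n$, contradicting Lemma \ref{LeHeavyPair}. I expect the main obstacle to be the structural claim that each triangle of $H$ sits inside an induced $N$ of $\clc(G)$; once that is verified, the rest is a short degree-maximization argument exploiting the absence of heavy pairs in the closure.
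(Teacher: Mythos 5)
Your proposal is correct and follows essentially the same route as the paper: pass to the c-closure, use Corollary \ref{CoSN} and Lemma \ref{LeHeavyPair} to get a claw-free, $N$-p-heavy closure with no heavy pair, invoke Theorem \ref{ThBr} to find an induced member of $\mathcal{P}$, and derive a heavy pair from the two triangles via the degree-maximization argument. The only difference is that you explicitly verify that each triangle of the $\mathcal{P}$-subgraph lies in an induced $N$, a detail the paper asserts without proof; your verification is correct.
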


\section*{Acknowledgements}
Some results in this paper were reported at the conference CSGT 2014
in Teplice nad Be\v{c}vou by the first author, and he is grateful to
the organizers for a friendly atmosphere.

\end{document}